\DeclareMathOperator{\Hom}{Hom}
\DeclareMathOperator{\codim}{codim}
\DeclareMathOperator{\Spec}{Spec}
\DeclareMathOperator{\id}{id}
\DeclareMathOperator{\Id}{id}
\DeclareMathOperator{\Tr}{Tr}
\DeclareMathOperator{\val}{val}
\DeclareMathOperator{\sign}{sgn}
\DeclareMathOperator{\trop}{trop}
\DeclareMathOperator{\out}{out}
\DeclareMathOperator{\Mult}{Mult}
\DeclareMathOperator{\Aut}{Aut}
\DeclareMathOperator{\SH}{SH}
\DeclareMathOperator{\Ad}{Ad}
\DeclareMathOperator{\ad}{ad}
\DeclareMathOperator{\LL}{L}
\DeclareMathOperator{\ov}{ov}
\DeclareMathOperator{\forget}{Forget}
\DeclareMathOperator{\GW}{GW}
\DeclareMathOperator{\inde}{index}
\DeclareMathOperator{\domain}{domain}
\DeclareMathOperator{\UnShuff}{UnShuff}
\DeclareMathOperator{\Trop}{Trop2Cob}
\DeclareMathOperator{\Poly}{PolyVect}
\DeclareMathOperator{\Mod}{Mod}
\DeclareMathOperator{\Cob}{Cob}
\DeclareMathOperator{\even}{even}
\DeclareMathOperator{\topp}{top}
\DeclareMathOperator{\Flags}{Flags}
\let\llb=\llbracket
\let\rrb=\rrbracket
\let\bb=\mathbb
\let\rar=\rightarrow
\let\ra=\rightarrow
\let\f=\mathfrak
\let\s=\mathcal
\let\wh=\widehat
\let\wt=\widetilde
\def\risom{\buildrel\sim\over{\smashedlongrightarrow}}
 \def\smashedlongrightarrow{\setbox0=\hbox{$\longrightarrow$}\ht0=1.25pt\box0}
 \def\edge{\mbox{\hspace{-.15 cm}\textminus\hspace{-.05 cm}\textminus\hspace{-.05 cm}\textminus\hspace{-.05 cm}\textminus}\hspace{-.15 cm}}
 \def\leftsink{\prec\mbox{\hspace{-.25 cm}\textminus\hspace{-.05 cm}\textminus\hspace{-.05 cm}\textminus\hspace{-.05 cm}\textminus}}
\def\leftsource{\succ\mbox{\hspace{-.15 cm}\textminus\hspace{-.05 cm}\textminus\hspace{-.05 cm}\textminus}}
\def\rightsink{\mbox{\textminus\hspace{-.05 cm}\textminus\hspace{-.05 cm}\textminus\hspace{-.05 cm}\textminus}\hspace{-.25 cm}\succ}
\def\rightsource{\mbox{\textminus\hspace{-.05 cm}\textminus\hspace{-.05 cm}\textminus}\hspace{-.15 cm}\prec}
\def\lrsource{\succ\mbox{\hspace{-.15 cm}\textminus\hspace{-.05 cm}\textminus\hspace{-.05 cm}\textminus}\hspace{-.15 cm}\prec}
\def\lrsink{\prec\mbox{\hspace{-.25 cm}\textminus\hspace{-.05 cm}\textminus\hspace{-.05 cm}\textminus\hspace{-.05 cm}\textminus}\hspace{-.25 cm}\succ}
\def\lsourcersink{\succ\mbox{\hspace{-.15 cm}\textminus\hspace{-.05 cm}\textminus\hspace{-.05 cm}\textminus}\hspace{-.25 cm}\succ}
\def\lsinkrsource{\prec\mbox{\hspace{-.25 cm}\textminus\hspace{-.05 cm}\textminus\hspace{-.05 cm}\textminus}\hspace{-.15 cm}\prec}
\newcommand {\kk} {\Bbbk}
\newcommand {\A} {{\bf A}}
\newcommand {\RR} {{\mathbb{R}}}
\newcommand {\ZZ} {{\mathbb{Z}}}
\newcommand {\shM} {{\mathcal{M}}}
\theoremstyle{plain}
 \newtheorem{thm}{Theorem}[section]
 \newtheorem{lem}[thm]{Lemma}
  \newtheorem{prop}[thm]{Proposition}
   \newtheorem{cor}[thm]{Corollary}
	 \newtheorem{obs}[thm]{Observation}
\theoremstyle{definition}
 \newtheorem{dfn}[thm]{Definition}
 \newtheorem{ntn}[thm]{Notation}
 \newtheorem{eg}[thm]{Example}
 \newtheorem{lemdfn}[thm]{Lemma/Definition}
\theoremstyle{remark} 
 \newtheorem{rmk}[thm]{Remark}
\title[Tropical QFT, mirror polyvector fields, and multiplicities of tropical curves]{Tropical quantum field theory, mirror polyvector fields, and multiplicities of tropical curves}
\author{Travis Mandel}
\address{Department of Mathematics\\
University of Oklahoma\\
Norman, OK 73019\\
USA}
\email{tmandel{\char'100}ou.edu}
\author{Helge Ruddat}
\address{JGU Mainz \& Univ. Hamburg}
\thanks{The first author was supported by the National Science Foundation RTG Grant DMS-1246989, and later by the Starter Grant ``Categorified Donaldson-Thomas Theory'' no. 759967 of the European Research Council.  The second author was supported by the DFG Emmy-Noether grant RU 1629/4-1 and is grateful for hospitality at the IAS in Princeton.}
\email{ruddat@uni-mainz.de}
\begin{document}

\begin{abstract}
We introduce algebraic structures on the polyvector fields of an algebraic torus that serve to compute multiplicities in tropical and log Gromov-Witten theory while also connecting to the mirror symmetry dual deformation theory of complex structures. Most notably these structures include a tropical quantum field theory and an $L_{\infty}$-structure. The latter is an instance of Getzler's gravity algebra, and the $l_2$-bracket is a restriction of the Schouten-Nijenhuis bracket.
We explain the relationship to string topology in the appendix (thanks to Janko Latschev).
\end{abstract}

\maketitle

\setcounter{tocdepth}{1}
\tableofcontents  

\section{Introduction}
Counts of tropical curves have been identified to govern the infinitesimal smoothing of mirror dual maximally degenerate complex Calabi-Yau varieties in \cite{GS11,GPS,KLM}. 
More recently, the importance of Batalin-Vilkovisky (BV) structures has emerged in this context \cite{CLM,FFR,Felten}.  We find a direct relationship between these perspectives by expressing multiplicities of tropical curves in terms of certain iterated higher brackets of polyvector fields which coincide with Getzler's gravity algebra operations \cite{Getzler} and with Chas-Sullivan's $L_\infty$-structure on equivariant string homology \cite{CS}.  The smoothing algorithm in \cite{GS11} can be interpreted as gluing infinitesimal versions of localizations of algebraic tori along particular isomorphisms called \emph{wall-crossing transformations}.  We find that these wall-crossing transformations, along with their action on rational functions and the induced action on polyvector fields, are consistent with our description of tropical multiplicities.

In any theorem relating counts of tropical curves to Gromov-Witten invariants, the tropical curves must be counted with certain multiplicities.  
For a planar tropical curve, one may associate a multiplicity to each vertex \cite{Mi} so that the entire multiplicity is simply the product of the multiplicities of the vertices.  But in higher dimensions, or even in two dimensions with psi-class conditions, one lacks such a local description, and the multiplicities are instead given as the index of a complicated map of lattices, see 
\cite[Prop. 5.7]{NS} and for a generalization to psi-classes and boundary conditions \cite{MRud}.  For the aforementioned connection to mirror dual structures, the global descriptions of multiplicities are impractical to work with.  In the present paper, we prove several new formulae for tropical multiplicities in terms of local computations controlled algebraic structures on polyvector fields:
\begin{enumerate}

   \item for arbitrary genus in terms of a two-dimensional tropical quantum field theory (TrQFT), as defined and developed in \S \ref{SectionTrQFT}, see Theorem \ref{MainThm}. Roughly,
a 2D TrQFT is a functor from a category whose objects are tropical degrees and whose morphisms are tropical cobordisms, see Def.~\ref{TropCobDfn}. We prove an algebraic characterization of 2D TrQFT's
in Theorem~\ref{TrQFT-Equivalent-Data}, generalizing the relationship of 2D TQFT's with Frobenius algebras;

   \item for genus zero as the result of an iterated bracket of polyvector fields, see Theorem \ref{BracketMult}. The $2$-bracket $l_2$ here agrees with the Schouten-Nijenhuis bracket, while the higher brackets $l_k$, which appear when $\psi$-classes are present, form an $L_{\infty}$-algebra on the kernel of the BV-operator on polyvector fields.  The reader hoping to extract a practical tropical multiplicity formula (especially in the context of the Gross-Siebert program) is encouraged to read \S\ref{MirrorPolySection} for a concise and self-contained description;
   
    \item for genus zero as a product of vertex multiplicities divided by a product of edge multiplicities, cf. Corollary \ref{SplitCor} and Theorem \ref{SplitThm}.
    
\end{enumerate}
We explain the relevance of the new formulae in the following sections.

\subsection{Theta functions on cluster varieties} 
\label{cluster-l-infty}
In \S \ref{MSapps} we consider Theorem~\ref{BracketMult} in the context of the Gross-Siebert mirror symmetry program \cite{GSprogram,GS11}.  
We show in Proposition~\ref{dl2} that the induced action of a wall-crossing transformation on polyvector fields
agrees with the adjoint action for the Schouten-Nijenhuis bracket $l_2$.  
We conjecture the existence of ``theta polyvector fields'' which extend the notion of theta functions studied by Gross, Hacking, Keel, Kontsevich, and Siebert \cite{CPS,GHK1,GHKK,GHS}.  The first author \cite{ManFrob} used Theorem~\ref{BracketMult} to prove that one can express the \cite{GHKK} theta bases in terms of mirror descendant log Gromov-Witten numbers, see Example~\ref{ThetaEx}.  Proposition~\ref{dl2} suggests that a similar argument might apply to the conjectural theta polyvector fields. Throughout \S\ref{MirrorPolySection}-\S\ref{Linf}, we point out several remarkable connections to \cite{BK} that we don't yet fully understand the significance of.

\hbox{\qquad\qquad\qquad\qquad\qquad\qquad\qquad\qquad\qquad\qquad\qquad\qquad}\vspace{-.5cm}
\begin{wrapfigure}[11]{r}{0.4\textwidth}
\captionsetup{width=.89\linewidth}
\begin{center}\vspace{-.96cm}
{\includegraphics[width=0.327\textwidth]{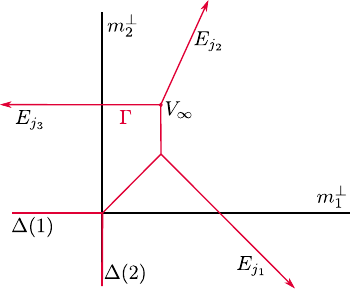}}
\end{center}
\caption{A tropical curve $\Gamma$ contributing to the constant term in the product of three theta functions.}
\label{Fig: theta-product}
\end{wrapfigure}
The following example illustrates how the multiplication rule of theta functions on cluster varieties is determined by means of our tropical multiplicity formulae \cite[Thm.~3.9]{Man3}. The example also introduces the aforementioned $L_\infty$-structure.

\begin{eg}\label{ThetaEx}
Let $\omega(\cdot,\cdot)$ denote an integral skew-symmetric bilinear form on $N:=\ZZ^n$.  Let $\Delta:I\sqcup J \sqcup \{\infty\} \rar N$ be a map of sets with $\Delta$ nonzero on $I\sqcup J$ and with $\Delta(\infty)=0$.  
We require that $m_i:=\omega(\Delta(i),\cdot)\in M:=\Hom(N,\ZZ)$ is nonzero for each $i\in I$.  
A tropical curve $\Gamma$ has degree $\Delta$ if it comes with a bijection of its unbounded edges with $I\sqcup J \sqcup \{\infty\}$ so that the edge $E_i$ corresponding to $i\in I\sqcup J\sqcup \{\infty\}$ has weighted tangent direction $\Delta(i)$. 
For such a curve, we impose conditions on the position of the edges, namely edge $E_i$ is required to be contained in $A_i$ where
\begin{tabbing}
$A_{\infty}$\qquad\qquad\qquad  \= is a point in general position,\\
$A_j$  for $j\in J$\> is $N_{\bb{R}}:=N\otimes_\ZZ\RR$, so no condition on $E_j$,\\
$A_i$ for $i\in I$ \> is a translate of $m_i^{\perp}$ into a general position.
\end{tabbing}
The conditions imply that the edge $E_\infty$ gets contracted under the map that takes $\Gamma$ to $N_\RR:=N\otimes_\ZZ\RR$ while the other unbounded edges are non-contracted.
The $A_i$ with $i\in I$ carry a weight\footnote{Such weights show up, for example, in \cite[\S 4.3]{MRud}.}   $w(A_i)$ equal to the index of $m_i$.
We furthermore require that the unique vertex $V_{\infty}$ of $E_{\infty}$ is $(|J|+1)$-valent, this condition is referred to as $\Psi$.
Figure~\ref{Fig: theta-product} illustrates an example of the image of such a curve $\Gamma$ for $n=2$, $|I|=2$ and $|J|=3$.

One can show that a tropical curve $\Gamma$ of genus $0$ and degree $\Delta$ satisfying $\A:=\{A_i\}$ and $\Psi$ is rigid which means that there don't exist any continuous deformations of the curve which still satisfy the conditions.
Furthermore, for such a curve $\Gamma$, each component of $\Gamma\setminus \{E_{\infty}\}$ contains exactly one of the edges of the form $E_j$ for $j\in J$. 
In \cite{Man3}, the edges indexed by $J$ are indexing \emph{theta functions} whereas the edges indexed from $I$ correspond to \emph{Maslov index zero disks} that originate in the \emph{walls} given by the $m_i^\perp$, see Figure~\ref{Fig: theta-product}.  

We now compute the multiplicity of $\Gamma$ using Theorem \ref{BracketMult}. One can show that $\Gamma$ is trivalent away from $V_\infty$, so we assume this from now on.  
Consider the exterior algebra of polyvector fields on the algebraic torus $\bb{G}_m(M)=\Spec \bb{Z}[N]$ given by
$$A:=\bigwedge^\bullet\Theta_{\Spec\ZZ[N]/\Spec\ZZ}= \ZZ[N]\otimes\Lambda^\bullet M.$$ 
We are going to associate an element $\zeta_{E}\in A$ to each edge $E$ of $\Gamma$ inductively, starting with the unbounded edges by setting
\begin{align*}
\zeta_{E_{\infty}}&:= 1\otimes(\hbox{a generator of }\Lambda^{\topp} M),\\
\zeta_{E_j}&:=z^{\Delta(j)}\otimes 1\qquad\qquad \hbox{for each }j\in J,\\
\zeta_{E_i}&:=z^{\Delta(i)}\otimes m_i \qquad\quad\   \hbox{for each }i\in I.
\end{align*}
We use the $\bb{Z}$-linear map $\ell_1:A\rar A$ given by\footnote{Here, $\iota_n$ denotes the tensor contraction. For $m\in M$, $\iota_n(m)$ is the dual pairing $\langle n,m\rangle$.  More general contractions can be computed with the Leibniz rule $\iota_n(m_0\wedge \cdots \wedge m_k)=\sum_{i=0}^k (-1)^{i} \iota_n(m_i) m_0\wedge \cdots \wedge m_{i-1}\wedge m_{i+1} \wedge \cdots \wedge m_k$.}
$\ell_1(z^n\alpha)=z^n\iota_n(\alpha)$, and define $\ell_k:A^{\otimes k}\rar A$ by
\begin{align}
\label{eq-l-infty}
 \ell_k(z^{n_1}\alpha_1,\ldots,z^{n_k}\alpha_k):= z^{n_1+\ldots+n_k} \iota_{n_1+\ldots+n_k}(\alpha_1\wedge \cdots \wedge \alpha_k).
\end{align}
We view $\Gamma$ as a tree with root $V_{\infty}$ and consider the natural flow=(orientation of edges) from the leaves=(unbounded edges) towards $V_{\infty}$.
At every vertex $V$ that is different from $V_{\infty}$, we have two incoming edges $E_1, E_2$. Let us assume that the associated forms for these edges take the shape $\zeta_{E_k}=z^{n_{E_k}}\otimes \omega(n_{E_k},\cdot)$.  
Then to the outgoing edge $E_3$ we are going to associate
\begin{align*}
\zeta_{E_3}&:=\ell_2(\zeta_{E_1},\zeta_{E_2})=\ell_1\Big(z^{n_{E_1}+n_{E_2}}\otimes\big( \omega(n_{E_1},\cdot)\wedge \omega(n_{E_2},\cdot)\big)\Big)\\
&=\omega(n_{E_1},n_{E_2})z^{n_{E_1}+n_{E_2}}\otimes \omega(n_{E_1}+n_{E_2},\cdot)=\omega(n_{E_1},n_{E_2})z^{n_{E_3}}\otimes \omega(n_{E_3},\cdot)
\end{align*}
with $n_{E_3}=n_{E_1}+n_{E_2}$. Note that, up to an integer multiple, the outgoing edge is again of the shape that we assumed for the incoming edges.
If, instead, $\zeta_{E_2}$ were just equal to $z^{n_{E_2}}$, with $\zeta_{E_1}$ as before, then we find by the same rule
\begin{align*}
    \zeta_{E_3}:=\ell_2(\zeta_{E_1},\zeta_{E_2})=\omega(n_{E_1},n_{E_2})z^{n_{E_3}}.
\end{align*}
If two unbounded edges meet in a trivalent vertex, we are in one of the two above situations because by rigidity the case of two edges $E_i$ with $i\in I$ meeting is excluded. 
For the trivalent vertex $V$, let us define the integer 
$$\Mult(V):=|\omega(n_{E_1},n_{E_2})|$$
which is up to sign the integer coefficient of the outgoing edge in either of the two cases.
By induction along the flow, we produce a form $\zeta_E\in A$ for every edge $E$ of $\Gamma$. 
When all branches of the flow finally reach the sink $V_\infty$, the product of the forms $\zeta_E$ for the edges adjacent to $V_\infty$ is of the form $a z^n\otimes \alpha $ where $a\in\ZZ$ is the product of the terms $\omega(n_{E_1},n_{E_2})$ that we gather along the flow, the exponent $n$ is zero by the balancing condition of $\Gamma$ and the form $\alpha$ is the wedge product of all forms of the edges adjacent to $V_\infty$. 
All non-contracted edge however carry a form in $\Lambda^0 M=\ZZ$ (because the conditions are non over-determining $\Gamma$)
while the contracted edge $E_\infty$ by assumption carries $\Omega$, a generator of $\Lambda^{\topp} M$.
Setting $\Mult(V_\infty)=1$, we therefore conclude the equality
$$
 \prod_{E\ni V_\infty} \zeta_E = \left(\prod_{V} \Mult(V)\right)\otimes\Omega
$$
Our main result about multiplicities from from flows, Theorem \ref{BracketMult}, now gives the multiplicty of $\Gamma$ as a product over vertex multiplicities:
\begin{align}\label{ProdMultV}
    \Mult(\Gamma)=\left(\prod_{V} \Mult(V)\right)\otimes\Omega
\end{align}
The application of Theorem \ref{BracketMult} to this example is used in \cite{ManFrob} to relate the multiplicities of \cite[\S 3.1.2]{Man3} to those of \cite{MRud}, thus relating theta functions on cluster varieties to descendant log Gromov-Witten invariants.

In the quantum version of \cite[Thm. 3.9]{Man3}, the analogous multiplicities correspond to a refinement as in \cite{BG}.  
That is, one defines the quantum multiplicities $\Mult_q(\Gamma)$ by replacing each $\Mult(V)$ in \eqref{ProdMultV} with a Laurent polynomial $\Mult_q(V)$. For $V\neq V_{\infty}$, $\Mult_q(V):=q^{\Mult(V)}-q^{-\Mult(V)}$, while $\Mult_q(V_{\infty}):=q^{\sum_{i<j} \omega(n_{E_i},n_{E_j})}$, where $E_1,\ldots,E_{|J|}$ are the edges (other than $E_{\infty}$) containing $V_{\infty}$, ordered according to the order of the theta function multiplication.  
We expect that \cite{Mikq}'s interpretation of refined counts of planar tropical curves can be generalized to relate the refined tropical counts to sign-weighted counts of real curves or holomorphic disks with boundary on the real locus. The computation here will then imply that the real curve counts determine the holomorphic curve counts. 

We remark here that the computation of multiplicities via iterated Lie brackets 
in the context of theta functions and scattering diagrams
has recently been related to a technique for solving the Maurer-Cartan equation via certain sums over trees, see \cite{LMY}.
\end{eg}

\subsection{Tropical invariance and the Jacobi identity}

The tropical Gromov-Witten numbers are invariant under generic translations of the incidence conditions $\A$.  This of course follows from the fact that these numbers are known to correspond to descendant log Gromov-Witten invariants \cite{MRud}.  On the other hand, a direct proof of this tropical invariance in $2$-dimensional cases (without $\psi$-classes) was given by Gathmann-Markwig in \cite{GM}.  Building off their approach, Figure \ref{JacobiFig} demonstrates that, when multiplicities are computed in terms of Schouten-Nijenhuis brackets as in Theorem \ref{BracketMult}, the invariance of the genus $0$ tropical counts is related to the Jacobi identity.   
\begin{figure}[htb]
    \begin{tabular}{c c c c c}
    \def\svgwidth{100pt}
\begingroup%
  \makeatletter%
  \providecommand\color[2][]{%
    \errmessage{(Inkscape) Color is used for the text in Inkscape, but the package 'color.sty' is not loaded}%
    \renewcommand\color[2][]{}%
  }%
  \providecommand\transparent[1]{%
    \errmessage{(Inkscape) Transparency is used (non-zero) for the text in Inkscape, but the package 'transparent.sty' is not loaded}%
    \renewcommand\transparent[1]{}%
  }%
  \providecommand\rotatebox[2]{#2}%
  \newcommand*\fsize{\dimexpr\f@size pt\relax}%
  \newcommand*\lineheight[1]{\fontsize{\fsize}{#1\fsize}\selectfont}%
  \ifx\svgwidth\undefined%
    \setlength{\unitlength}{353.80989393bp}%
    \ifx\svgscale\undefined%
      \relax%
    \else%
      \setlength{\unitlength}{\unitlength * \real{\svgscale}}%
    \fi%
  \else%
    \setlength{\unitlength}{\svgwidth}%
  \fi%
  \global\let\svgwidth\undefined%
  \global\let\svgscale\undefined%
  \makeatother%
  \begin{picture}(1,0.74186581)%
    \lineheight{1}%
    \setlength\tabcolsep{0pt}%
    \put(0,0){\includegraphics[width=\unitlength,page=1]{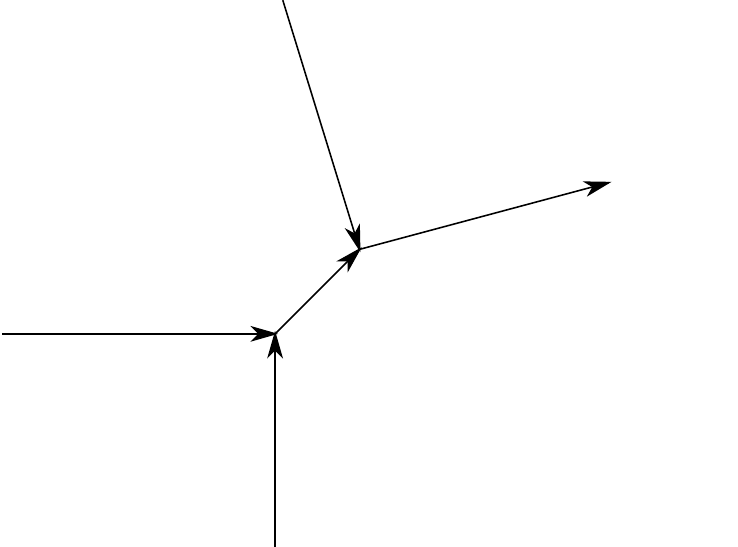}}%
    \put(0.27222946,0.73742634){\color[rgb]{0,0,0}\makebox(0,0)[lt]{\begin{minipage}{0.37365748\unitlength}\raggedright $E_1$\end{minipage}}}%
    \put(-0.00641905,0.27834314){\color[rgb]{0,0,0}\makebox(0,0)[lt]{\begin{minipage}{0.37365748\unitlength}\raggedright $E_2$\end{minipage}}}%
    \put(0.3775868,0.09816022){\color[rgb]{0,0,0}\makebox(0,0)[lt]{\begin{minipage}{0.37365748\unitlength}\raggedright $E_3$\end{minipage}}}%
    \put(0.73635206,0.39080163){\color[rgb]{0,0,0}\makebox(0,0)[lt]{\lineheight{1.25}\smash{\begin{tabular}[t]{l}$E_4$\end{tabular}}}}%
  \end{picture}%
\endgroup%

    & ~ &\def\svgwidth{100pt} 
\begingroup%
  \makeatletter%
  \providecommand\color[2][]{%
    \errmessage{(Inkscape) Color is used for the text in Inkscape, but the package 'color.sty' is not loaded}%
    \renewcommand\color[2][]{}%
  }%
  \providecommand\transparent[1]{%
    \errmessage{(Inkscape) Transparency is used (non-zero) for the text in Inkscape, but the package 'transparent.sty' is not loaded}%
    \renewcommand\transparent[1]{}%
  }%
  \providecommand\rotatebox[2]{#2}%
  \newcommand*\fsize{\dimexpr\f@size pt\relax}%
  \newcommand*\lineheight[1]{\fontsize{\fsize}{#1\fsize}\selectfont}%
  \ifx\svgwidth\undefined%
    \setlength{\unitlength}{359.59552515bp}%
    \ifx\svgscale\undefined%
      \relax%
    \else%
      \setlength{\unitlength}{\unitlength * \real{\svgscale}}%
    \fi%
  \else%
    \setlength{\unitlength}{\svgwidth}%
  \fi%
  \global\let\svgwidth\undefined%
  \global\let\svgscale\undefined%
  \makeatother%
  \begin{picture}(1,0.69900286)%
    \lineheight{1}%
    \setlength\tabcolsep{0pt}%
    \put(0,0){\includegraphics[width=\unitlength,page=1]{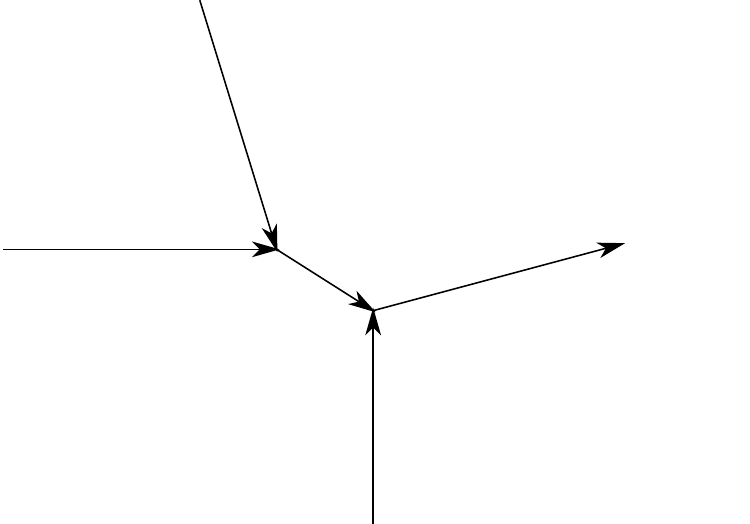}}%
    \put(0.15283929,0.69908882){\color[rgb]{0,0,0}\makebox(0,0)[lt]{\begin{minipage}{0.3676456\unitlength}\raggedright $E_1$\end{minipage}}}%
    \put(-0.00631577,0.35525141){\color[rgb]{0,0,0}\makebox(0,0)[lt]{\begin{minipage}{0.3676456\unitlength}\raggedright $E_2$\end{minipage}}}%
    \put(0.51095406,0.10258511){\color[rgb]{0,0,0}\makebox(0,0)[lt]{\begin{minipage}{0.3676456\unitlength}\raggedright $E_3$\end{minipage}}}%
    \put(0.74059396,0.27729557){\color[rgb]{0,0,0}\makebox(0,0)[lt]{\lineheight{1.25}\smash{\begin{tabular}[t]{l}$E_4$\end{tabular}}}}%
  \end{picture}%
\endgroup%

    & ~ & \def\svgwidth{100pt}  
\begingroup%
  \makeatletter%
  \providecommand\color[2][]{%
    \errmessage{(Inkscape) Color is used for the text in Inkscape, but the package 'color.sty' is not loaded}%
    \renewcommand\color[2][]{}%
  }%
  \providecommand\transparent[1]{%
    \errmessage{(Inkscape) Transparency is used (non-zero) for the text in Inkscape, but the package 'transparent.sty' is not loaded}%
    \renewcommand\transparent[1]{}%
  }%
  \providecommand\rotatebox[2]{#2}%
  \newcommand*\fsize{\dimexpr\f@size pt\relax}%
  \newcommand*\lineheight[1]{\fontsize{\fsize}{#1\fsize}\selectfont}%
  \ifx\svgwidth\undefined%
    \setlength{\unitlength}{307.10810748bp}%
    \ifx\svgscale\undefined%
      \relax%
    \else%
      \setlength{\unitlength}{\unitlength * \real{\svgscale}}%
    \fi%
  \else%
    \setlength{\unitlength}{\svgwidth}%
  \fi%
  \global\let\svgwidth\undefined%
  \global\let\svgscale\undefined%
  \makeatother%
  \begin{picture}(1,0.72266633)%
    \lineheight{1}%
    \setlength\tabcolsep{0pt}%
    \put(0,0){\includegraphics[width=\unitlength,page=1]{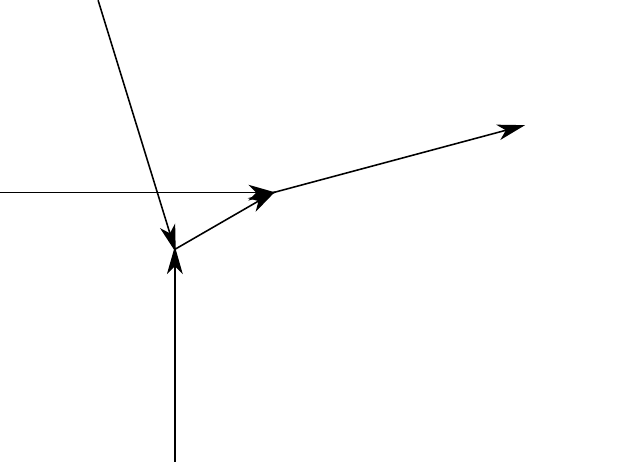}}%
    \put(0.01572765,0.72864919){\color[rgb]{0,0,0}\makebox(0,0)[lt]{\begin{minipage}{0.43047939\unitlength}\raggedright $E_1$\end{minipage}}}%
    \put(-0.00526083,0.40733525){\color[rgb]{0,0,0}\makebox(0,0)[lt]{\begin{minipage}{0.43047939\unitlength}\raggedright $E_2$\end{minipage}}}%
    \put(0.27875058,0.11776587){\color[rgb]{0,0,0}\makebox(0,0)[lt]{\begin{minipage}{0.43047939\unitlength}\raggedright $E_3$\end{minipage}}}%
    \put(0.69625924,0.4060662){\color[rgb]{0,0,0}\makebox(0,0)[lt]{\lineheight{1.25}\smash{\begin{tabular}[t]{l}$E_4$\end{tabular}}}}%
  \end{picture}%
\endgroup%
 \\ \\
    $[\zeta_{E_1},[\zeta_{E_2},\zeta_{E_3}]]$  &$=$& $[[\zeta_{E_1},\zeta_{E_2}],\zeta_{E_3}]$ &$+$& $[\zeta_{E_2},[\zeta_{E_1},\zeta_{E_3}]]$
    \end{tabular}
            \caption{In the space of translations of the incidence conditions there are codimension $1$ walls along which two $3$-valent vertices merge to one $4$-valent vertex. On one side of such a wall, this $4$-valent vertex can deform in one way, while on the other side it may deform in one or two ways.  The invariance of the tropical counts is then related to the Jacobi identity. The bracket here is the Schouten-Nijenhuis bracket (i.e., $l_2$). \label{JacobiFig}}
\end{figure}

In general, the signs in this Jacobi identity approach are surprisingly unwieldy.  However, given another Gerstenhaber algebra which $q$-deforms polyvector fields and which satisfies $l_2(\wt{\zeta}_{E_1},\wt{\zeta}_{E_2})=0$ whenever the $q\mapsto 1$ limit $[\zeta_{E_1},\zeta_{E_2}]$ is $0$, one can use this approach to prove refined invariance results.  Indeed, similar techniques were used in \cite[\S 3.3.3]{Man3} to prove a refined version of the Carl-Pumperla-Siebert \cite{CPS} Lemma on consistency of theta functions, interpreted in \cite{Man3} in terms of invariance of tropical counts. Invariance of the refined descendant tropical counts mentioned in Example \ref{ThetaEx} was obtained as a corollary, cf. \cite[Prop. 3.5]{Man3}.  We hope that Block-G\"ottsche invariants \cite{BG} (whose invariance was proved in \cite{IM} using the techniques of \cite{GM}) could be understood using this approach, along with the various other refined invariants defined in \cite{BlSh,GS,Man3,ShuRefined,SS,Blo}\footnote{We note that \cite{Blo} also introduces orientations on the tropical moduli space which could be used to address the unwieldy sign issue we mentioned above.  We also note that \cite[Thm. 6.8]{Blo} appears to rediscover our Theorem \ref{BracketMult}.} but at this point we do not know how to $q$-deform more than just the degree $0$ and $1$ parts of the polyvector field Gerstenhaber algebra (the $q$-deformation in these degrees essentially gives the quantum torus algebra and its adjoint action).

\subsection{Getzler's gravity algebra, string topology and symplectic cohomology}
In \cite{Getzler}, Section 4, Ezra Getzler introduced an algebraic structure induced on the equivariant cohomology of a topological conformal field theory by the functional integrals over the moduli space of smooth curves $\shM_{0,n}$ that he named a \emph{gravity algebra}, see also \cite{GK94} for the Koszul dual structure.
Given a graded BV-algebra $(A,\Delta)$, Getzler defines brackets 
$$\{a_1,...,a_k\}:=\Delta(a_1\cdot...\cdot a_k)-\sum_{i=1}^k (-1)^{\deg(a_1)+...+\deg(a_{i-1})} a_1\cdot...\cdot(\Delta a_i)\cdot...\cdot a_k.$$
After taking $(A=\bb{Z}[N]\otimes_\ZZ \Lambda^* M,\ell_1)$ as our BV-algebra (cf. \S \ref{l1BV}), a direct comparison proves the following observation: 
\begin{lem} 
Getzler's gravity operators on $A=\bb{Z}[N]\otimes_\ZZ \Lambda^* M$, when restricted to $A_0=\ker(\Delta)$, agree with the brackets $\ell_k$ defined in \eqref{eq-l-infty}.
\end{lem} 
The relationship of our $L_\infty$-structure with string topology is explained in detail in Appendix~\ref{app}.
We set $T=N_\RR/N \cong (S^1)^r$. If $\s LT$ denotes the free loop space of $T$, then by \eqref{iso-loop-module}, the ring $A$ is naturally isomorphic to
$$
A\cong H_{r-\bullet}(\s LT,\ZZ)
$$
when the latter is equiped with the loop product (Observation~\ref{Janko1}). 
By Abouzaid's theorem \cite[Corollary 6.1.2]{Abouzaid}, $A$ is now also identified with the symplectic cohomology of $T$ by a natural isomorphism
$$
A\cong \SH^\bullet(T^*T,\ZZ)
$$
of BV-algebras. Finally, the $L_\infty$-structure $\{l_k\}$ defined as a certain sign-twisting of the brackets $\{\ell_k\}$ from \eqref{eq-l-infty}  (cf. \S \ref{sec:lk}) is compatible with Chas-Sullivan's $L_\infty$-structure on equivariant cohomology of $\s LT$ by Observation~\ref{Janko2}.

\subsection{Counting special Lagrangian submanifolds}
The multiplicities of tropical curves that are computed by the algebraic structures that we introduce have recently been identified to agree with Joyce's weight of a Lagrangian submanifold \cite{MakRud,MikLag} whenever the tropical curve is used to produce a Lagrangian submanifold in a Lagrangian torus fibration. 
For the latter construction, see \cite{RS19,Matessi,Matessi2,MikLag,MakRud,Hicks,Hicks2}. The construction of torus fibrations and cycles of weight equal to the tropical multiplicity is vastly generalized topologically in \cite{RudZha1,RudZha2,RudZha3}.

\subsection{Relationship with previously known formulae in special cases}

Various already-known multiplicity formulae can be easily recovered from our algebraic structure theorems. In particular, Mikhalkin's formula \cite{Mi} for multiplicities of planar tropical curves is easily recovered from our Proposition \ref{SplitPoint}.  A formula for multiplicities of genus $0$ curves satisfying line conditions in three dimensions is given in \cite[Prop. 6.7]{MikLag}, and this can be recovered from our Theorem \ref{BracketMult} (in this setup, the wedge-products followed by contractions are interpreted as cross-products).

\subsection{Acknowledgements} 
We thank Lawrence Barrott, Mark Gross, Joachim Kock, Janko Latschev, Sven Meinhardt, Dan Pomerleano, Brent Pym, Nick Sheridan, Dmitry Tonkonog, and Yixian Wu for useful conversations.

\section{Review of tropical curves and their multiplicities}

\begin{ntn}
\label{TropNotation} For use throughout this paper, fix a lattice $N$ of finite rank $r\geq 0$, and let $M$ be the dual lattice $\Hom(N,\bb{Z})$.  For any lattice $L$, denote $L_{\bb{R}}:=L\otimes \bb{R}$.  Let $\langle\cdot,\cdot\rangle$ denote the pairing between a lattice and its dual.  We say $v\in L$ is primitive if it is not a positive multiple of any other element of $L$, and we say $v$ has index $k\ge 0$ in $L$ if $v=kv'$ for some primitive $v'\in L$, $k\in \bb{Z}_{\geq 0}$.  We denote the index of $v$ by $|v|$.  Given any subset $S\subset N_{\bb{R}}$, we let $\LL(S)$ denote the linear span of $S$ in $N_{\bb{R}}$, i.e., the $\bb{R}$-span of the set of vectors $u-v$ where $u,v\in S$.  We will denote $\LL_N(S):=\LL(S)\cap N$.  
\end{ntn}

\subsection{Tropical curves}
\label{BasicSection}
In this and the next subsection, we recall the basic definitions of tropical Gromov-Witten numbers, cf. \cite[\S 2]{MRud} for more details.

Let $\?{\Gamma}$ denote the topological realization of a finite connected graph.  Let $\Gamma$ be the complement of some subset of the $1$-valent vertices of $\?{\Gamma}$.  Let $\Gamma^{[0]}$, $\Gamma^{[1]}$, $\Gamma^{[1]}_{\infty}$, and $\Gamma^{[1]}_c$ denote the sets of vertices, edges, non-compact edges, and compact edges of $\Gamma$, respectively.  We equip $\Gamma$ with a ``weight-function'' $w:\Gamma^{[1]}\rar \bb{Z}_{\geq 0}$ and a ``genus-function'' $g:\Gamma^{[0]} \rar \bb{Z}_{\geq 0}$, subject to the requirement that univalent and bivalent vertices have positive genus. 

A {\bf marking} of $\Gamma$ is a bijection $\epsilon:I\rar \?{\Gamma}^{[0]}\setminus \Gamma^{[0]}$ for some index set $I$.  Let $E_i\in \Gamma^{[1]}_{\infty}$ denote the edge containing $\epsilon(i)$.  Let $I^{\circ}\subset I$ denote the set of $i\in I$ for which $w(E_i)=0$.
Denote by $(\Gamma,\epsilon)$ the data of $\Gamma$, the weight-function $w$, the genus-function $g$, and the marking.

\begin{dfn}\label{TropCurveDfn}
A {\bf parameterized tropical curve} $(\Gamma,\epsilon,h)$ is data $(\Gamma,\epsilon)$ as above, along with a continuous map $h:\Gamma\rar N_{\bb{R}}$ such that 
\begin{enumerate}
\item For each edge $E\in \Gamma^{[1]}$ with $w(E)>0$, $h|_E$ is a proper embedding into an affine line with rational slope. For $E\in \Gamma^{[1]}$ with $w(E)=0$, $h(E)$ is a point.
\item  For every $V\in \Gamma^{[0]}$, the following {\bf balancing condition} holds.  For each edge $E\ni V$, denote by $u_{(V,E)}$ the primitive integral vector emanating from $h(V)$ into $h(E)$ (or $u_{(V,E)}\coloneqq 0$ if $h(E)$ is a point). Then
\begin{align*}
\sum_{E\ni V} w(E) u_{(V,E)} =0.
\end{align*}
\end{enumerate}
Furthermore, for each contracted compact edge $E$, we have the additional data of a ``length'' in $\bb{R}_{>0}$.

For unbounded edges $E_i \ni V$, we may denote $u_{(V,E_i)}$ simply as $u_{E_i}$ or $u_i$.  
Similarly, for any edge $E$, we may simply write $u_E$ when the vertex is either clear from context or unimportant (e.g., as in $\bb{Z}u_{E}$).  
For each edge, we arbitrarily fix a labelling of its vertices as $\partial^+ E$ and $\partial^- E$, possibly writing just $\partial E$ if $E$ contains only one vertex.  If $w(E)=0$ and $V\in E$, we take $u_E\coloneqq u_{(V,E)} = 0$.

An {\bf isomorphism} of parameterized tropical curves $(\Gamma,\epsilon,h)$ and $(\Gamma',\epsilon',h')$ is a homeo\-mor\-phism $\Phi:\Gamma\rar \Gamma'$ respecting the weights, genera, and markings such that $h=h'\circ \Phi$.  A {\bf tropical curve} is then defined to be an isomorphism class of parameterized tropical curves.  We will use $(\Gamma,\epsilon,h)$ to denote the isomorphism class it represents and will often abbreviate this as simply $h$ or $\Gamma$.
\end{dfn}

\begin{rmk}\label{noV}
If $\Gamma$ is nonempty but contains no vertices, then $\?{\Gamma}$ consists of two univalent vertices connected by an edge.  Then $I$ labels these univalent vertices, hence $I$ labels the two unbounded directions of $\Gamma$, which we view as the flags of $\Gamma$.  With this convention, the notions of type and degree are easily extended to curves $\Gamma$ with no vertices, but to simplify the exposition, we assume for the rest of this section that  $\Gamma^{[0]}\neq \emptyset$.  See \cite[Rmk. 4.17]{MRud} for some details on this case.
\end{rmk}

If $b_1(\Gamma)$ denotes the first Betti number of $\Gamma$, the {\bf genus} of a tropical curve $\Gamma$ is defined as
$$
g(\Gamma)= b_1(\Gamma)+\sum_{V\in\Gamma^{[0]}}g(V).
$$

Let $\Flags(\Gamma)$ denote the set of flags of $\Gamma$, i.e., pairs $(V,E)$ with $E\in \Gamma^{[1]}$ and $V$ a vertex of $E$.  The {\bf type} $\f{u}$ of a marked tropical curve is the data of the underlying graph $\Gamma$, $w$, $g$, $\epsilon$, plus the data of the map $u:\Flags(\Gamma)\rar N$, $(V,E)\mapsto w(E)u_{(V,E)}$.

Given a tropical curve,  the {\bf degree} $(I,\Delta)$, or $\Delta$ for short, is the data of the index set $I$ from the marking, along with the corresponding map $\Delta:I\rar N$, $\Delta(i)=w(E_i)u_i$.

Let $\val(V)$ denote the valence of a vertex $V$.  Define the over-valence $\ov(V)\coloneqq \val(V)+3g(V)-3$, and \[\ov(\Gamma)\coloneqq \sum_{V\in \Gamma^{[0]}} \ov(V).\]

The moduli space $\f{T}_{g,\Delta}$ of marked tropical curves of genus $g$ and degree $\Delta$ is a polyhedral complex whose faces correspond to tropical curve types.  If $\Gamma\in \f{T}_{g,\Delta}$ has type $\f{u}$, then the {\bf expected dimension} for the face
 $F_{\f{u}}$ corresponding to $\f{u}$ is
\begin{align*}
    d^{\trop}_{g,\f{u}}\coloneqq  \#I+(r-3)(1-g)-\ov(\Gamma).
\end{align*}
We say that tropical curves of type $\f{u}$ are {\bf non-superabundant} if they contain no contracted loops or higher-genus vertices and the actual dimension of $F_{\f{u}}$ equals this expected dimension.

\subsection{Tropical Gromov-Witten numbers}

\begin{dfn}\label{Constraints}
An {\bf affine constraint} $\A$ is a tuple $(A_i)_{i\in I}$ of affine subspaces of $N_{\bb{R}}$.  A marked tropical curve $(\Gamma,\epsilon,h)$ {\bf matches the constraint $\A$} if $h(E_i)\subset A_i$ for all $i\in I$.

Recall that $I^{\circ}\coloneqq \{i\in I|w(e_i)=0\}$.  Consider a tuple $\Psi\coloneqq(s_i)_{i\in I^{\circ}}\in \bb{Z}_{\geq 0}^{{m}}$.  For each $V\in \Gamma^{[0]}$, denote $I_V^{\circ}\coloneqq\{i\in I^{\circ}:E_i\ni V\}$.
 We say $(\Gamma,\epsilon,h)$ satisfies $\Psi$ if for each vertex $V$ we have 
 \begin{align}\label{PsiTropDfn}
    \ov(V) \geq \sum_{i\in I_V^{\circ}} s_i.
\end{align}

We are interested in the space
\[\f{T}_{g,\Delta}(\A,\Psi)\subset \f{T}_{g,\Delta}\]
of marked tropical curves of genus $g$, degree $\Delta$, matching the constraints $\A$ and satisfying the $\psi$-class conditions $\Psi$.  We write $\f{T}_{g,\f{u}}(\A,\Psi)$ for the subspace corresponding to tropical curves of type $\f{u}$.

For a marked vertex $V \in \Gamma^{[0]}$, when \eqref{PsiTropDfn} is an equality, let $\langle V \rangle$ denote the multinomial coefficient
\begin{align}\label{V}
\langle V \rangle \coloneqq\binom{{\ov(V)}}{{s_{i_1},\ldots,s_{i_{m_V}}}}_{i_j\in I_V^{\circ}} \coloneqq \frac{\ov(V)!}{\prod_{i\in \mu^{-1}(V)} s_i!}.
\end{align}
If no contracted edges contain $V$, then $\langle V\rangle\coloneqq1$.
\end{dfn}

When we say $\A$ is {\bf generic}, we mean that the spaces $A_i$ are generic translates of their corresponding linear spans $\LL(A_i)$ (cf. Notation \ref{TropNotation}).  For an edge $E\in \Gamma^{[1]}$, we will write $\LL(E)$ and $\LL_N(E)$ to mean $\LL(h(E))$ and $\LL_N(h(E))$, respectively.  That is, $\LL(E)=\{0\}$ if $w(E)=0$, and $\LL(E)=\bb{R}u_E$ otherwise.  

\begin{lemdfn}\label{MultDfn}
Let $\Gamma$ be a non-superabundant tropical curve of type $\f{u}$ in $\f{T}_{g,\Delta}(\A,\Psi)$ for a generic choice of $\A$ (generic in the space of translations of the incidence conditions).  Suppose that 
\begin{align}\label{GeneralDimension}
\sum_{i\in I} \codim(A_i) = d^{\trop}_{g,\f{u}}.
\end{align}
In this case, $\Gamma$ is an isolated point of $\f{T}_{g,\Delta}(\A,\Psi)$, and we say that $\Gamma$ is {\bf rigid} (with respect to $\A$ and $\Psi$).  We call $\f{T}_{g,\Delta}(\A,\Psi)$ rigid if every $\Gamma \in \f{T}_{g,\Delta}(\A,\Psi)$ is rigid, and in this case, $\f{T}_{g,\Delta}(\A,\Psi)$ is finite.

For any $(\Gamma,h)\in \f{T}_{g,\Delta}(\A,\Psi)$, we have a map
\begin{align}\label{D}
\Phi\coloneqq\prod_{V\in \Gamma^{[0]}}N &\rar \left(\prod_{E\in \Gamma^{[1]}_c} N/\LL_N(E)\right) \times \left(\prod_{i\in I} N/ \LL_N(A_i)\right) \\
H &\mapsto ((H_{\partial^+E}-H_{\partial^-E})_{E\in \Gamma^{[1]}_c},(H_{\partial E_j})_{j\in I}). \notag
\end{align}
Let $\Phi_{\bb{R}}=\Phi \otimes \bb{R}$, so $\ker \Phi_{\bb{R}}$ is naturally identified with the tangent space to $\f{T}_{g,\f{u}}(\A,\Psi)$ at $\Gamma$ as in \cite[Prop. 2.10]{MRud}.

In particular, when $\Gamma$ is a rigid tropical curve, $\Phi$ is a finite-index inclusion of lattices.  We denote
\begin{align}\label{DGamma}
    \f{D}_{\Gamma}\coloneqq \inde(\Phi)
\end{align} and
\begin{align}\label{MultGamma}
\Mult(\Gamma)\coloneqq\f{D}_{\Gamma} \prod_{E\in \Gamma^{[1]}_c} w(E).
\end{align}
If there is ambiguity about which conditions $\A$ are being imposed, we will write $\Mult_{\A}(\Gamma)$.

If $\f{T}_{g,\Delta}(\A,\Psi)$ is rigid, we define the \textbf{tropical descendant Gromov-Witten numbers} as follows:
\begin{align}\label{GWtrop}
\GW_{g,\Delta}^{\trop}(\A,\Psi)\coloneqq\sum_{(\Gamma,\mu,\epsilon,h)\in \f{T}_{g,\Delta}(\A,\Psi)} \frac{\Mult(\Gamma)}{|\Aut \Gamma|}\prod_{V\in \Gamma^{[0]}} \langle V \rangle.
\end{align}
\end{lemdfn}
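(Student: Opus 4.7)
The statement packages three lemma-type claims alongside several definitions: (L1) $\ker\Phi_{\bb R}$ is identified with the tangent space to $\f T_{g,\f u}(\A,\Psi)$ at $\Gamma$; (L2) under the hypothesis \eqref{GeneralDimension}, $\Gamma$ is an isolated point of $\f T_{g,\Delta}(\A,\Psi)$; and (L3) for rigid $\Gamma$, $\Phi$ is a finite-index inclusion of lattices. Once (L1)--(L3) are in place, the definitions of $\f D_\Gamma$, $\Mult(\Gamma)$, and $\GW^{\trop}_{g,\Delta}(\A,\Psi)$ require nothing further. The plan is to tackle them in the order (L1), (L2), (L3).

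I would prove (L1) directly from the definitions: a first-order deformation of $\Gamma$ inside its stratum $F_{\f u}(\A,\Psi)$ is a tuple of vertex displacements $(H_V)_{V\in\Gamma^{[0]}}$ such that, for each compact edge $E$, the difference $H_{\partial^+E}-H_{\partial^-E}$ preserves the prescribed direction $u_E$ (equivalently, vanishes modulo $\LL_N(E)$), and, for each marked edge $E_i$, the displacement $H_{\partial E_i}$ lies in $\LL(A_i)$ (preserving the incidence). These are exactly the conditions cutting out $\ker\Phi_{\bb R}$, which is the substance of the cited \cite[Prop.~2.10]{MRud}. For (L2), I would then combine (L1) with non-superabundancy to obtain $\dim F_{\f u}(\A,\Psi)=\dim\ker\Phi_{\bb R}=d^{\trop}_{g,\f u}-\sum_i\codim A_i=0$ by hypothesis, so $\Gamma$ is isolated in its own stratum. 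The upgrade to global isolation in $\f T_{g,\Delta}(\A,\Psi)$ rests on ruling out accumulation from adjacent faces $F_{\f{u}'}\supset F_{\f u}$; a short combinatorial check (using that collapsing a compact edge joining two distinct vertices increases $\ov$ by $3$ and that $\val$, $g$, $b_1$ are related by $\sum_V\val(V)=2\#\Gamma^{[1]}_c+\#I$ and $g=b_1(\Gamma)+\sum_V g(V)$) shows $d^{\trop}_{g,\f{u}'}>d^{\trop}_{g,\f u}$ whenever $\f u$ is a proper specialization of $\f{u}'$, and genericity of $\A$ is then invoked to conclude that no family in $F_{\f{u}'}(\A,\Psi)$ accumulates at $\Gamma$.

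For (L3), I would count lattice ranks. The source $\prod_V N$ has rank $r\#\Gamma^{[0]}$, and the target has rank $\sum_{E\in\Gamma^{[1]}_c}(r-\dim\LL_N(E))+\sum_i\codim A_i$. Using the Euler identity $\#\Gamma^{[0]}-\#\Gamma^{[1]}_c-\#I=1-b_1(\Gamma)$ together with the valence and $\ov$ identities above, the hypothesis \eqref{GeneralDimension} translates exactly to equality of these two ranks (with a small bookkeeping adjustment if any compact edge has weight zero). Since $\ker\Phi_{\bb R}=0$ from (L2), $\Phi$ is an injection of free abelian groups of equal rank, hence a finite-index inclusion of lattices; so $\f D_\Gamma:=\inde(\Phi)$ is finite and $\Mult(\Gamma)$ is well-defined. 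The main obstacle I anticipate is the upgrade in (L2) from stratum-level isolation to isolation in all of $\f T_{g,\Delta}(\A,\Psi)$: controlling potential accumulation from neighboring strata of higher expected dimension requires a simultaneous genericity argument across all adjacent types, and is the technical heart of the lemma.
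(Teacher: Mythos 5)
The paper does not actually present a self-contained proof here: it cites \cite[Prop.~2.10]{MRud} for the identification of $\ker\Phi_{\bb R}$ with the tangent space and leaves the isolation and finite-index claims as consequences, so your job is really to reconstruct what is implicit. Your overall decomposition into (L1)--(L3) and your strategy (cite the tangent-space identification, compute the stratum dimension, rule out accumulation from adjacent strata, then count ranks) is the right one.

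Two arithmetic slips, both in the combinatorial bookkeeping. First, the Euler identity should be $\#\Gamma^{[0]}-\#\Gamma^{[1]}_c=1-b_1(\Gamma)$; the half-open unbounded edges contribute nothing (or compute on $\?\Gamma$, where $\#\Gamma^{[0]}$ and $\#\Gamma^{[1]}_c$ each go up by $\#I$ and the identity is unchanged). Second, collapsing a compact edge between distinct vertices $V_1,V_2$ produces a vertex $V$ with $\val(V)=\val(V_1)+\val(V_2)-2$ and $g(V)=g(V_1)+g(V_2)$, hence $\ov(V)=\ov(V_1)+\ov(V_2)+1$: collapsing raises $\ov(\Gamma)$ by $1$, not $3$. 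The sign of the change is all you need, so your conclusion $d^{\trop}_{g,\f u'}>d^{\trop}_{g,\f u}$ for a resolution $\f u'$ of $\f u$ still stands (in fact it increases by exactly $1$).

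For (L3) you should also record where non-superabundancy enters the rank count. Working it out: $\ov(\Gamma)=2\#\Gamma^{[1]}_c+\#I+3\sum_V g(V)-3\#\Gamma^{[0]}$, and with $g=b_1+\sum_V g(V)$ one finds
\begin{equation*}
d^{\trop}_{g,\f u}=r\#\Gamma^{[0]}-(r-1)\#\Gamma^{[1]}_c-r\sum_{V}g(V),
\end{equation*}
so that the ranks of source and target of $\Phi$ agree exactly when $\sum_V g(V)=0$ (no higher-genus vertices, part of non-superabundancy) and every compact edge has positive weight. You do flag the weight-zero case, but to close the gap you need to observe that a contracted compact edge adds $1$ to the target rank, and so one must argue (or defer to \cite{MRud}) that rigid non-superabundant types in $\f T_{g,\Delta}(\A,\Psi)$ for generic $\A$ have no contracted compact edges. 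Finally, as you correctly identify, the genuinely delicate step is upgrading from stratum-level isolation to isolation in all of $\f T_{g,\Delta}(\A,\Psi)$: one has to control limits from strata $F_{\f u'}(\A,\Psi)$ with $\f u'$ a resolution of $\f u$, using both genericity of $\A$ and the interplay with the $\psi$-conditions $\Psi$ (a resolution of a $\Psi$-tight vertex violates $\Psi$, but resolutions of vertices with $\ov(V)>\sum_i s_i$ do not). This is exactly the part the paper delegates to \cite{MRud}.
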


It was proved in \cite[Thm 1.1]{MRud} that this quantity coincides with the corresponding descendant log Gromov-Witten invariant (as well as a naive algebraic count) for projective toric varieties with cocharacter lattice $N$, defined over an algebraically closed characteristic $0$ field $\kk$.

We note that special cases of the correspondence result \cite[Thm 1.1]{MRud} were previously proved in many other works.  In particular, \cite{MRud} built on techniques of \cite{NS}, which considered genus $0$ cases without $\psi$-class conditions.  Arbitrary genus cases in dimension $2$ with only point conditions were dealt with in \cite{Mi}, where multiplicities were already defined as products of vertex multiplicities.  As discussed below, \cite{Ran,AGr} use tropical intersection theory techniques to prove the correspondence theorems in genus $0$ (with and without $\psi$-classes, respectively).  We also note that the first correspondence theorems involving $\psi$-classes were proved in special two-dimensional genus $0$ cases by \cite{MR} and \cite{GrP2}.

\subsection{The tropical intersection-theoretic description of multiplicities}

In genus $0$, the above-mentioned correspondence between tropical and algebraic Gromov-Witten counts was proved in \cite{Ran,AGr} in terms of tropical intersection theory, a quite different approach from that of \cite{NS,MRud}.  In particular, this indicates that the multiplicity $\Mult(\Gamma)$ of \eqref{MultGamma} can be expressed in terms of the tropical intersection theory developed in \cite{AR,Rau}.  For our goal of understanding our tropical curve multiplicities, it will suffice for us to understand tropical intersections of weighted rational subspaces of some $L_{\bb{R}}$, i.e. rational-slope linear subspaces with an associated weight in $\bb{Z}_{\geq 1}$.  We call a weighted rational subspace primitive if this weight is $1$.  Since the tropical intersection product is linear, it suffices to understand the primitive cases.  Intersections of weighted rational subspaces are then characterized by the following lemma, and for our purposes the reader may take this as the definition of the tropical intersection product.  Here and below, we use $\prod$ to denote the tropical intersection product of a collection of tropical cycles, and $\bigcap$ to denote the set-theoretic intersection.
\begin{lem}\label{TropIntIndex}
Given a finite-rank lattice $L$, let $\{A_i\}_{i\in I}$ be a collection of primitive weighted rational subspaces of $L\otimes \bb{R}$.  Then for the intersection product of these classes we have $\prod_{i} A_i = w \bigcap_i A_i$, where $w$ is the index of the map $L\rar \bigoplus_i L/(L\cap A_i)$ (and $w=0$ if this map is not finite-index).
\end{lem}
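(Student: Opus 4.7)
The plan is to proceed by induction on $|I|$. The base case $|I|=1$ is immediate: the map $L\to L/(L\cap A_1)$ is surjective, so $w=1$, while $\prod_i A_i = A_1 = \bigcap_i A_i$.

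For the inductive step, I would invoke the standard pairwise formula for tropical intersections of linear tropical subspaces: if $B_1,B_2\subset L_\bb{R}$ are linear tropical subspaces with weights $m_1,m_2$, then
$$B_1\cdot B_2 \;=\; m_1 m_2\cdot [L:(L\cap B_1)+(L\cap B_2)]\cdot (B_1\cap B_2)$$
when $B_1+B_2 = L_\bb{R}$, and zero otherwise; this is a standard consequence of the fan-displacement rule \cite{AR,Rau}. Fixing $k\in I$, setting $I':=I\setminus\{k\}$ and $C:=\bigcap_{i\in I'}A_i$, the inductive hypothesis gives $\prod_{i\in I'}A_i = w' C$ with $w'$ the index of $\phi':L\to\bigoplus_{i\in I'}L/(L\cap A_i)$. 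Bilinearity of the intersection product in weights plus the pairwise formula then reduce the lemma to the lattice-theoretic identity
$$w \;=\; w'\cdot [L:(L\cap C)+(L\cap A_k)],$$
together with a matching of the degenerate (non-proper) cases on both sides.

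For this lattice identity, write $\phi = \phi'\times q_k$, where $q_k:L\to L/(L\cap A_k)$ is the quotient. A direct diagram chase shows that the map $F:L\to(\image(\phi')\oplus L/(L\cap A_k))/\image(\phi)$ sending $x\mapsto[(0,q_k(x))]$ is surjective with $\ker F = (L\cap C)+(L\cap A_k)$; equivalently, we obtain a short exact sequence
$$0\to\image(\phi)\to\image(\phi')\oplus L/(L\cap A_k)\to L/\bigl((L\cap C)+(L\cap A_k)\bigr)\to 0.$$
Inside the ambient lattice $\bigoplus_{i\in I}L/(L\cap A_i) = \bigl(\bigoplus_{i\in I'}L/(L\cap A_i)\bigr)\oplus L/(L\cap A_k)$, the subgroup $\image(\phi')\oplus L/(L\cap A_k)$ has index $w'$, and multiplicativity of indices in exact sequences reads off the identity. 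For the degenerate case, an $\bb{R}$-rank count shows that $\phi$ fails to be finite-index precisely when $\codim_{L_\bb{R}}\bigcap_i A_i<\sum_i\codim A_i$, and inductively (using that $w=0$ whenever $w'=0$) this matches the vanishing of the tropical intersection.

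The main obstacle is the lattice bookkeeping—in particular, carefully identifying $\ker F$ and verifying that $F$ is surjective. Once the short exact sequence above is in hand, the rest is formal via multiplicativity of indices. A more pedestrian alternative would be to choose compatible bases and compute a single determinant, but the diagrammatic approach above is more transparent and treats the finite-index and degenerate cases uniformly.
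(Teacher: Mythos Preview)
Your argument is correct. The lattice identity via the short exact sequence is sound: the surjectivity of $F$ and the identification $\ker F=(L\cap C)+(L\cap A_k)$ both check out as you describe, and multiplicativity of indices then gives $w=w'\cdot[L:(L\cap C)+(L\cap A_k)]$. Your treatment of the degenerate cases is also fine.

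The paper, however, takes a different and much shorter route: it first cites \cite[Lem.~1.4]{Rau} for the case where every $A_i$ is a hyperplane, and then reduces the general case to this by writing each $A_i$ as a tropical product of hyperplanes containing it. In effect, the paper inducts on total codimension rather than on $|I|$, and all the lattice bookkeeping is absorbed into the hyperplane citation. Your approach instead takes the pairwise formula (the $|I|=2$ case) as input from the fan-displacement rule and inducts on $|I|$, making the lattice-theoretic content explicit via your short exact sequence. The paper's argument is more economical; yours is more self-contained about why the index $w$ is the right multiplicity, and the exact sequence you isolate is a clean statement in its own right. Note that your pairwise input is itself the $|I|=2$ instance of the lemma, so your argument and the paper's are really citing comparable external facts (the pairwise case versus the hyperplane case) and then reducing in orthogonal directions.
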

\begin{proof}
The case where each $A_i$ is a hyperplane is just \cite[Lem. 1.4]{Rau} (for each $h_i$ there having primitive slope).  The general case follows after noting that any $A_i$ can be realized as a tropical product of hyperplanes.
\end{proof}

For each edge $E\in \Gamma^{[1]}$, there is a corresponding factor $N/\LL_N(E)$ or $N/\LL_N(A_i)$ 
in the codomain of $\Phi$, and we define $\Phi_E$ to be the composition of $\Phi$ with the projection onto this factor.  Then $\ker(\Phi_E\otimes \bb{R})$ defines a linear subspace, hence a tropical cycle in $\domain(\Phi_{\bb{R}})$.  If $E$ is compact, this tropical cycle is a diagonal class between the two copies of $N_{\bb{R}}$ corresponding to the vertices of $E$, and we denote the class by $[\Delta_E]$.  If $E=E_i$ is not compact, we denote the corresponding class by $[A_i]$ since it is the class of the pullback of $A_i$ by the evaluation map corresponding to $E_i$.

\begin{prop}\label{MultTropIntersection}
\begin{align}\label{MultTropIntersectionFormula}
    \Mult(\Gamma)=\int_{\domain(\Phi_{\bb{R}})} \left(\prod_{E\in \Gamma^{[1]}_c} w(E)[\Delta_E]\right) . \left( \prod_{i\in I} [A_i] \right).
\end{align}
\end{prop}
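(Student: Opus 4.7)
The plan is to reduce the statement to a direct application of Lemma~\ref{TropIntIndex}. Set $L := \prod_{V \in \Gamma^{[0]}} N$, so $L_{\bb{R}} = \domain(\Phi_{\bb{R}})$. By the linearity of the tropical intersection product, I would first pull the scalars $w(E)$ out of the right-hand side of \eqref{MultTropIntersectionFormula}, so that in view of \eqref{MultGamma} the claim reduces to the identity
\[
\int_{L_{\bb{R}}} \Bigl(\prod_{E\in \Gamma^{[1]}_c} [\Delta_E]\Bigr) \cdot \Bigl(\prod_{i\in I} [A_i]\Bigr) = \f{D}_\Gamma.
\]

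To apply Lemma~\ref{TropIntIndex}, I would first verify that each factor is primitive. By construction $[\Delta_E]$ is the linear subspace $\ker(\Phi_E \otimes \bb{R}) \subseteq L_{\bb{R}}$, and its integer lattice $\ker(\Phi_E)$ is saturated in $L$ because the quotient embeds in the torsion-free group $N/\LL_N(E)$; the same argument with $N/\LL_N(A_i)$ shows that each $[A_i] = \ker(\Phi_{E_i} \otimes \bb{R})$ is primitive. Lemma~\ref{TropIntIndex} then equates the intersection product above with $w$ times the set-theoretic intersection $\bigcap_E [\Delta_E] \cap \bigcap_i [A_i]$, where $w$ is the index of the natural map
\[
L \longrightarrow \bigoplus_{E \in \Gamma^{[1]}_c} L/\ker(\Phi_E) \;\oplus\; \bigoplus_{i \in I} L/\ker(\Phi_{E_i}).
\]

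The core observation is that, using surjectivity of $\Phi_E(H) = H_{\partial^+ E} - H_{\partial^- E}$ onto $N/\LL_N(E)$ and of $\Phi_{E_i}(H) = H_{\partial E_i}$ onto $N/\LL_N(A_i)$, the target of this map is canonically identified with the codomain of $\Phi$, and under this identification the map is exactly $\Phi$ itself. By rigidity (Lemma/Definition~\ref{MultDfn}), $\Phi$ is a finite-index inclusion of lattices with $\inde(\Phi) = \f{D}_\Gamma$, so $w = \f{D}_\Gamma$. Moreover, $\ker(\Phi_{\bb{R}}) = 0$ forces the set-theoretic intersection $\bigcap_E [\Delta_E] \cap \bigcap_i [A_i]$ to be the origin, a primitive $0$-cycle whose integral is $1$. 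Thus the integral equals $\f{D}_\Gamma$, as needed.

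I expect the argument to be essentially a matter of carefully unwinding definitions once Lemma~\ref{TropIntIndex} is in hand; the only point requiring a little care is the simultaneous identification of the target groups $L/\ker(\Phi_E)$ with $N/\LL_N(E)$ (and similarly for $[A_i]$) so that the map from the lemma becomes literally $\Phi$. Neither the primitivity check nor this identification strikes me as a serious obstacle, so I anticipate a short proof.
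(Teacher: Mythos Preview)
Your proposal is correct and follows essentially the same approach as the paper: pull out the weights $w(E)$ by linearity, then apply Lemma~\ref{TropIntIndex} together with the definition of $\Phi$ to identify the remaining intersection number with $\f{D}_\Gamma$. The paper's proof is a two-sentence sketch of exactly this argument, and your write-up simply fills in the details (primitivity, the identification $L/\ker(\Phi_E)\cong N/\LL_N(E)$, and the integral over a point) that the paper leaves implicit.
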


\begin{proof}
It follows easily from Lemma \ref{TropIntIndex} and the definition of $\Phi$ that $\f{D}_{\Gamma}$ is given by the right-hand side of \eqref{MultTropIntersectionFormula} without the $w(E)$-factors.  The claim then follows immediately after multiplying by these weights.  
\end{proof}

\begin{rmk}
For readers familiar with our paper \cite{MRud}, we note here that we could have directly used \eqref{MultTropIntersectionFormula} (times $\prod_V \langle V \rangle$) as our definition of multiplicity in the proof of the correspondence theorem \cite[Thm 1.1]{MRud}.  Indeed, these tropical intersections have a geometric interpretation directly applicable to our proof there as follows:  let
\begin{align*}
    \Phi^{\circ}\coloneqq  \prod_{V\in \Gamma^{[0]}}N &\rar \prod_{E\in \Gamma^{[1]}_c} N/w(E)\LL_N(E) \\
H &\mapsto (H_{\partial^+E}-H_{\partial^-E})_{E\in \Gamma^{[1]}_c}.
\end{align*}
The combination of \cite[Prop. 4.10 and Lem. 4.11]{MRud} says that the space of log curves in $\s{X}_0^{\dagger}$ (cf. \textit{loc. cit.} for the notation) with tropicalization $\Gamma$ is a $\ker(\Phi^{\circ}_{\kk^*})$-torsor over $\s{M}(\Gamma)\coloneqq (\prod_V \s{M}_{g(V),\val(V)})/\Aut(\Gamma)$.   The $\psi$-classes exactly serve to cut out $\prod_V \langle V\rangle$ points in the base $\s{M}(\Gamma)$, each with multiplicity $\frac{1}{|\Aut \Gamma|}$, so we can focus on $\ker(\Phi^{\circ}_{\kk^*})$.  We want to show that the number of points in the intersection of the algebraic cycles $[Z_{A_i}]$ and $[Z_{A_i,u_i}]$ (cf. \cite[\S 3.2.1]{MRud}) in $\ker(\Phi^{\circ}_{\kk^*})$ is given by \eqref{MultTropIntersectionFormula}.  Indeed, after observing that $\ker(\Phi^{\circ}_{\kk^*})$ is the intersection in $\domain(\Phi_{\kk^*})$ of the diagonal classes $w(E)[\Delta_E^{\kk^*}]$ corresponding to the compact edges, it is clear that the tropical intersection of \eqref{MultTropIntersectionFormula} is exactly the tropicalization of the intersection of toric cycles appearing in the algebraic setup.  
\end{rmk}

\subsection{Tropical intersections, wedge products, and a Frobenius algebra}
The following reinterpretations of Lemma \ref{TropIntIndex} will be instrumental in \S \ref{SectionTrQFT}.

Let $L$ be an arbitrary lattice.  Given a linear rational-slope subspace $A\subset L_{\bb{R}}$ of weight $w$, let $\alpha_A$ denote the unique-up-to-sign element \begin{align}\label{alphaA}
\alpha_A\in \Lambda^{\codim A} L^*
\end{align}
of index $w$ whose restriction to $A$ is trivial.

\begin{lem}\label{Trop-Wedge}
For $\{A_i\}_i$ a collection of weighted rational subspaces of $L_{\bb{R}}$, and for $A\coloneqq \prod_i A_i$, we have $\alpha_A = \pm \bigwedge_i \alpha_{A_i}$.
\end{lem}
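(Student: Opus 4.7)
The strategy is to identify $\bigwedge_i \alpha_{A_i}$ with the pullback along $\phi \colon L \to P := \bigoplus_i L/(L \cap A_i)$ of a top-degree generator of $\Lambda^d P^*$, where $\phi$ is the map from Lemma \ref{TropIntIndex}, and then relate this pullback to $\alpha_A$ using the factorization of $\phi$ through $Q := L/(L \cap B)$, with $B := \bigcap_i A_i$, $d_i := \codim A_i$, and $d := \sum_i d_i$. Since each $\alpha_{A_i}$ is by definition the pullback to $L^*$ of a primitive generator of $\Lambda^{d_i} P_i^*$ for $P_i := L/(L \cap A_i)$ (primitivity uses that $A_i$ has weight $1$, so $L \cap A_i$ is saturated), the wedge $\bigwedge_i \alpha_{A_i}$ equals $\phi^* \alpha_P$, where $\alpha_P \in \Lambda^d P^*$ is the primitive generator (up to a sign depending on the ordering of the $A_i$) under the canonical identification $\Lambda^d P^* \cong \bigotimes_i \Lambda^{d_i} P_i^*$.

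First I would dispatch the degenerate case $\codim B < d$: then $\rank Q = \codim B < d$ forces $\Lambda^d Q^* = 0$, and since $\phi^*$ factors as $\Lambda^d P^* \to \Lambda^d Q^* \to \Lambda^d L^*$, we obtain $\bigwedge_i \alpha_{A_i} = \phi^* \alpha_P = 0$. On the other hand, Lemma \ref{TropIntIndex} gives $w = 0$ in this case, so $\alpha_A = 0$ as well and the identity holds trivially.

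In the transverse case $\codim B = d$, both $Q$ and $P$ are free of rank $d$, and $Q \hookrightarrow P$ is a sublattice of finite index $w$. The induced dual inclusion $\Lambda^d P^* \hookrightarrow \Lambda^d Q^*$ has index $w$, so $\alpha_P = \pm w \cdot \alpha_Q$ for $\alpha_Q$ the primitive generator of $\Lambda^d Q^*$. Moreover, the pullback of $\alpha_Q$ under $L \twoheadrightarrow Q$ is the primitive element of $\Lambda^d L^*$ vanishing on $B$ (saturation of $L \cap B$ preserves primitivity). Since $A = w \cdot B$ as tropical cycles by Lemma \ref{TropIntIndex}, this primitive element equals $\alpha_A/w$, so we conclude $\bigwedge_i \alpha_{A_i} = \pm w \cdot (\alpha_A/w) = \pm \alpha_A$. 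The only subtle point is the sign arising from the canonical iso $\Lambda^d P^* \cong \bigotimes_i \Lambda^{d_i} P_i^*$, which depends on the order chosen for the $A_i$ and accounts exactly for the $\pm$ in the statement.
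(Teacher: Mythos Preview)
Your proof is correct. The paper actually states this lemma without proof, treating it as an immediate consequence of Lemma \ref{TropIntIndex} and the definition of $\alpha_A$; your argument supplies exactly the details one would write out, namely factoring $\phi$ through $Q=L/(L\cap B)$ and tracking the index $w$ through the top exterior power. One minor remark: the $\pm$ in the statement already absorbs the sign ambiguities in the individual $\alpha_{A_i}$ and in $\alpha_A$, not just the ordering of the factors, so your final sentence slightly understates where the sign indeterminacy comes from---but this does not affect the validity of the argument.
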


The sign ambiguity in the Lemma is inconvenient.  It can be avoided using the following ``squaring'' trick which will also prevent more serious sign issues later on.  For any lattice $L$ and elements $a_1,\ldots,a_k\in L$, if $\alpha=a_1\wedge \cdots \wedge a_k$, we denote 
\begin{align}\label{squaring}
    \alpha^{\Box} \coloneqq  \bigwedge_{i=1}^k ((a_i,0)\wedge (0,a_i)) \in \Lambda^{\even} (L\oplus L).
\end{align}
Equivalently, $\alpha^{\Box}=(-1)^{\deg(\alpha)(\deg(\alpha)-1)/2}(\alpha,0)\wedge (0,\alpha)$.  If $\alpha\in \Lambda^0 L = \bb{Z}$, then $\alpha^{\Box}\coloneqq \alpha^2$.  We obtain a canonical element
\begin{align}\label{ThetaL}
    \Theta_L^{\Box}\in \Lambda^{\topp}(L\oplus L),
\end{align}
where $\Theta_L$ is either choice of primitive element of $\Lambda^{\topp} L$.  The exterior algebra 
\begin{align}\label{CL}
    \s{C}_L\coloneqq \Lambda^*(L\oplus L)
\end{align} then becomes a graded-commutative Frobenius algebra\footnote{Recall that a Frobenius algebra over $R$ is an associative $R$-algebra $A$ together with an $R$-linear trace map $\Tr:A\rar R$ such that the pairing $\Tr:A\otimes_R A\rar R$, $\Tr(a \otimes b)\coloneqq \Tr(ab)$ is non-degenerate.  By graded-commutative, we mean that the multiplication is graded-commutative, and the Frobenius trace preserves the parity of the grading.  This parity-preservation is necessary for associating a closed-string $2D$ TQFT to the Frobenius algebra.  For odd-dimensional $L$, the Frobenius algebra structure on $\Lambda^* L$ would not be graded-commutative.  This motivates the squaring trick.} over $\bb{Z}$ with trace 
\begin{align*}
    \Tr_L:\s{C}_L\rar \bb{Z}
\end{align*} given by projecting onto $\Lambda^{\topp} (L\oplus L)$ and then composing with the unique map $\Lambda^{\topp} (L\oplus L)\rar \bb{Z}$ taking $\Theta_L^{\Box}$ to $1$.

Lemma \ref{Trop-Wedge} immediately implies the following:
\begin{lem}\label{Trop2Wedge}
Notation as in Lemma \ref{Trop-Wedge}.   Then
\begin{align*}
    \int_{L_{\bb{R}}} \prod_i A_i = \sqrt{\Tr_L\left(\bigwedge_i \alpha_{A_i}^{\Box} \right)}.
\end{align*}
\end{lem}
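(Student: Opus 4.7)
The plan is to reduce the identity to Lemma \ref{Trop-Wedge} after disposing of the vanishing cases, and then invoke basic properties of the squaring operation \eqref{squaring}. First I will observe that both sides vanish unless $\sum_i \codim A_i = \dim L$. On the right, $\bigwedge_i \alpha_{A_i}^{\Box}$ has exterior degree $2\sum_i \codim A_i$, while $\Tr_L$ projects onto the top degree $2\dim L$ part, so the trace is zero when the codimensions do not sum to $\dim L$. On the left, if $\sum_i \codim A_i > \dim L$ the map $L \rar \bigoplus_i L/(L\cap A_i)$ cannot be finite-index for rank reasons, so $\prod_i A_i = 0$ by Lemma \ref{TropIntIndex}; and if $\sum_i \codim A_i < \dim L$, the cycle $w\bigcap_i A_i$ is positive-dimensional and integrates to zero over $L_{\bb R}$. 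Similarly, if $\sum_i \codim A_i = \dim L$ but $\bigcap_i A_i$ is positive dimensional, then the map of Lemma \ref{TropIntIndex} has nontrivial kernel $L \cap \bigcap_i A_i$, so again $\prod_i A_i = 0$, and by Lemma \ref{Trop-Wedge} $\bigwedge_i \alpha_{A_i} = \pm \alpha_A = 0$, whence $\bigwedge_i \alpha_{A_i}^{\Box} = 0$ as well.

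This reduces matters to the case $\sum_i \codim A_i = \dim L$ with $\bigcap_i A_i = \{0\}$. Here Lemma \ref{TropIntIndex} gives $\prod_i A_i = w[\{0\}]$ with $w$ the stated lattice index, so $\int_{L_{\bb R}} \prod_i A_i = w$. Writing $A:=\prod_i A_i$ and applying Lemma \ref{Trop-Wedge} produces a sign $\varepsilon = \pm 1$ with $\bigwedge_i \alpha_{A_i} = \varepsilon \alpha_A$, and by \eqref{alphaA} we have $\alpha_A = w\Theta$ for $\Theta$ a primitive generator of the relevant $\Lambda^{\topp}$.

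The remaining step is a short sign check for the squaring operation. From $\alpha^{\Box} = (-1)^{d(d-1)/2}(\alpha,0)\wedge(0,\alpha)$ one sees directly that $(-\alpha)^{\Box} = \alpha^{\Box}$ (the two sign flips cancel). For multiplicativity, moving $(0,\alpha)$ past $(\beta,0)$ produces a Koszul sign $(-1)^{d_1 d_2}$, and the binomial identity $(d_1+d_2)(d_1+d_2-1)/2 = d_1(d_1-1)/2 + d_2(d_2-1)/2 + d_1 d_2$ shows that $(\alpha\wedge\beta)^{\Box} = \alpha^{\Box}\wedge\beta^{\Box}$ for homogeneous $\alpha,\beta$. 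Iterating these identities yields $\bigwedge_i \alpha_{A_i}^{\Box} = \bigl(\bigwedge_i \alpha_{A_i}\bigr)^{\Box} = (\varepsilon w \Theta)^{\Box} = w^2\,\Theta^{\Box}$. By the normalization in \eqref{ThetaL}, $\Tr_L(\Theta^{\Box}) = 1$, so $\Tr_L\bigl(\bigwedge_i \alpha_{A_i}^{\Box}\bigr) = w^2$ and its square root equals $w$, matching the left-hand side. The only genuine subtlety is the sign bookkeeping in the squaring identity, but this is precisely what \eqref{squaring} was designed to manage, so the main obstacle is minimal.
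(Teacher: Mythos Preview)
Your proof is correct and follows the same route as the paper, which simply asserts that the lemma is an immediate consequence of Lemma~\ref{Trop-Wedge}. You have spelled out the details the paper omits: the vanishing cases, the multiplicativity $(\alpha\wedge\beta)^{\Box}=\alpha^{\Box}\wedge\beta^{\Box}$ (which is in fact immediate from the product form of \eqref{squaring}, since each factor $(a_i,0)\wedge(0,a_i)$ has even degree), and the sign-independence $(-\alpha)^{\Box}=\alpha^{\Box}$. One minor organizational point: you invoke ``whence $\bigwedge_i\alpha_{A_i}^{\Box}=0$'' in the degenerate case before establishing multiplicativity; it would be cleaner to record $(\alpha\wedge\beta)^{\Box}=\alpha^{\Box}\wedge\beta^{\Box}$ at the outset, after which all cases follow uniformly from $\bigwedge_i\alpha_{A_i}^{\Box}=(\bigwedge_i\alpha_{A_i})^{\Box}$.
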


\section{Tropical quantum field theory}\label{SectionTrQFT}

\subsection{The definition of 2-dimensional tropical quantum field theory}\label{TrQFTdef}
In this section we define the notion of a 2D tropical quantum field theory\footnote{We view the TrQFT's introduced here as being two-dimensional, even though the tropical curves are one-dimensional, because the TQFT's it most closely resembles are two-dimensional.  Indeed, the log curves associated to the tropical curves have real dimension two, and we suspect our 2D TrQFT's can thus be viewed as the tropicalization of a logarithmic version of a 2D TQFT, cf. Remark \ref{logTQFT}.} (TrQFT for short) with target space $N_{\bb{R}}$.  We view this as a tropical analog of a 2D topological quantum field theory (TQFT).  

We begin by defining a symmetric monoidal category $\Trop$ which will for us play the role that $2\Cob$ (the category whose objects are disjoint unions of circles and whose morphisms are $2$-dimensional cobordisms) typically plays for a 2D TQFT.  An object of $\Trop$ is a \textbf{tropical degree}, by which we mean the data of a finite index-set $I$ along with a map $\Delta:I\rar N$.  Note that no balancing condition is imposed.  Here, two tropical degrees $(I_1,\Delta_1)$ and $(I_2,\Delta_2)$ are identified as the same object if there exists a bijection $i:I_1\risom I_2$ such that $\Delta_1=\Delta_2\circ i$.  We have an monoidal operation $\sqcup$ which, given two such objects $(I_1,\Delta_1)$ and $(I_2,\Delta_2)$, produces a third object $(I_1\sqcup I_2,\Delta_1\sqcup \Delta_2)$, where $I_1\sqcup I_2$ is the disjoint union of $I_1$ and $I_2$, and $\Delta_1\sqcup \Delta_2$ is the map taking $j$ to $\Delta_1(j)$ if $j\in I_1$ and $\Delta_2(j)$ if $j\in I_2$.

We note that the empty tropical degree $\emptyset\to N$ is the identity element for $\sqcup$.  For convenience, we will often write $[n]$ to denote the object \begin{align}\label{n}
    \Delta:\{1\}\rar N
\end{align} with $\Delta(1)=n$.

\begin{dfn}\label{TropCobDfn}
A \textbf{tropical cobordism} of tropical degree $\Delta:I\rar N$ is an equivalence class represented by the following data:
\begin{itemize}
    \item A finite graph $\?{\Gamma}$ {(not necessarily connected)};
    \item A ``marking'' $\epsilon:I\hookrightarrow \?{\Gamma}^{[0]}$, with image in the set of $1$-valent vertices of $\?{\Gamma}^{[0]}$.  Let $\Gamma\coloneqq \?{\Gamma}\setminus \epsilon(I)$.  For each $i\in I$, let $E_i$ denote the edge containing $\epsilon(i)$;
    \item A function $u:\Flags(\?{\Gamma})\rar N$.  If $V_1,V_2$ are the vertices of $E$, we require $u(V_1,E)=-u(V_2,E)$.  We also require $u(\epsilon(i),E_i)=-\Delta(i)$ for each $i\in I$;  
    \item A ``genus-function'' $g:\Gamma^{[0]}\rar \bb{Z}_{\geq 0}$.
\end{itemize}
The equivalence relation on the set of data is generated by
\begin{itemize}
    \item Isomorphisms of $\?{\Gamma}$ which respect $u$, $\epsilon$, and $g$, and
    \item  The operation that takes $E\in \Gamma^{[1]}_c\cap u^{-1}(0)$ with vertices $V$ and $V'$ and then and contracts $E$, identifying $V$ with $V'$.  If $E$ is self-adjacent, i.e., if $V=V'$, then we increase $g(V)$ by $1$ when we contract $E$.  Otherwise, the value of $g$ of the resulting vertex is the sum $g(V)+g(V')$.
\end{itemize}

In other words, a tropical cobordism is a possibly unbalanced tropical curve, up to type, and modulo contractions of compact weight-zero edges as in the equivalence relation above.
\end{dfn}

We can now define the morphisms of $\Trop$.  Given objects $(I_1,\Delta_1)$ and $(I_2,\Delta_2)$ as above (sometimes abbreviated as just $\Delta_1$ and $\Delta_2)$, $\Hom(\Delta_1,\Delta_2)$ is defined as the set of equivalence classes of tropical cobordisms of tropical degree $$(I_1\sqcup I_2, \Delta_1\sqcup (-\Delta_2)).$$  Note here that we negate the target degree.

Next suppose we have tropical cobordisms $\Gamma_{12}\in \Hom(\Delta_1,\Delta_2)$ and $\Gamma_{34}\in \Hom(\Delta_3,\Delta_4)$, and consider a set $J$, identified with a subset of $I_2$ and with a subset of $I_3$, such that $\Delta_{2}(j)=\Delta_{3}(j)$ for each $j\in J$. 
 Then we have a composition $\circ_{J}$ obtained by gluing $\Gamma_{12}$ and $\Gamma_{34}$ along the edges $E_{12,j}$ and $E_{34,j}$ associated to $j$ for each $j\in J$.  By ``gluing,'' we mean that we form a new tropical cobordism by removing $E_{12,j}$ and $E_{34,j}$ from $\Gamma_{12}$ and $\Gamma_{34}$, respectively, and then replacing these by a new compact edge $E_{j}$ between the vertices $\partial E_{12,j}$ and $\partial E_{34,j}$.  We then set 
 \begin{align*}
     u(\partial E_{12,j},E_j)=u(\partial E_{12,j},E_{12,j}) \qquad \mbox{and} \qquad u(\partial E_{34,j},E_j)=u(\partial E_{34,j},E_{34,j}).
 \end{align*}  Note that the assumption $\Delta_2(j)=\Delta_3(j)$ ensures that the condition $u(\partial E_{34,j},E_j)=-u(\partial E_{12,j},E_j)$ is satisfied.  The remaining data of the new curve is inherited in the obvious way.  In particular, when $J=I_2=I_3$, this $\circ_J$ gives the composition law for the category.
 
Note that the tropical cobordism with no vertices and with a single edge of weighted directions $\pm n\in N$ gives the identity morphism for the object $[n]$.  We have thus constructed our symmetric monoidal category $\Trop$.

\begin{dfn}
A two-dimensional \textbf{tropical quantum field theory} (TrQFT) is a functor $F$ of symmetric monoidal categories from $\Trop$ to another symmetric monoidal category $\f{C}$. 
\end{dfn}

We will always denote the monoidal operation on the target category $\f{C}$ by $\otimes$.

\begin{figure}[htb]
    \centering
    \includegraphics[width=.96\textwidth]{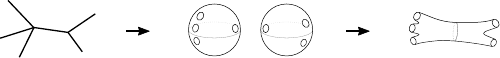}
    \caption{The map $\forget:\Trop\rar 2\Cob$ in an example.
    \label{fig:TrQFT-2-TQFT}}
\end{figure}
\begin{eg}\label{forgetEx}
There is a symmetric monoidal functor $\forget:\Trop\rar 2\Cob$ associating a circle to each element of $I$ and a $2$-cobordism to each tropical cobordism $\Gamma$. 
The way it works is sketched in Figure~\ref{fig:TrQFT-2-TQFT}.
To obtain $\forget(\Gamma)$ here, we first view $\Gamma$ as the dual graph to a pre-stable marked curve.  The cobordism is then constructed by treating markings as punctures, and treating nodes as pairs of punctures glued together. This yields a TrQFT, and furthermore, any TQFT can be pulled back via $\forget$ to yield a TrQFT.  In the reverse direction, we have a section of $\forget$ which naturally identifies $2\Cob$ with the subcategory of $\Trop$ whose objects are those of the form $\Delta:I \rar \{0\}\subset N$, and whose morphisms consist of tropical cobordisms whose flags all have direction $u=0$.  Thus, every TrQFT includes the data of a TQFT via restriction to this subcategory.
\end{eg}

\begin{rmk}
An earlier version of this paper took objects of $\Trop$ to be the projections of our tropical degrees to $\?{N}\coloneqq N/\{\pm \Id\}$, with similar modifications to morphisms and compositions.  While sufficiently general for our purposes here, that definition of $\Trop$ is less general (and less natural) than what we have presented here --- a TrQFT $F:\Trop\rar \f{C}$ in the present version corresponds to a TrQFT in the older framework if and only if $F([n])=F([-n])$ for each $n\in N$.
\end{rmk}

In the case of interest to us, the target symmetric monoidal category will be the category of super $\bb{Z}$-modules, which we denote by $$s\bb{Z}\Mod.$$  I.e., $s\bb{Z}\Mod$ is the category of $\bb{Z}/2\bb{Z}$-graded Abelian groups, with tensor product as the monoidal operator, and with braiding $\tau_{V,W}:V\otimes W\rar W\otimes V$ taking $v\otimes w$ to $(-1)^{\deg(v)\deg(w)} w\otimes v$, where $\deg(v)$ and $\deg(w)$ are the degrees of the homogeneous elements $v\in V$ and $w\in W$.

\subsection{Tropical flows and an algebraic characterization of TrQFT's}\label{TrAlg}

It is a standard fact (proved in \cite{Abrams}) that the data of a 2D TQFT valued in the category of vector spaces is equivalent to the data of a commutative Frobenius algebra.  More generally, a TQFT is a commutative Frobenius object in whatever the target symmetric monoidal category is.  In particular, when the target is the category of super $R$-modules for some ring $R$, a commutative Frobenius object is the same as a supercommutative Frobenius $R$-algebra.  See \cite{KockTQFT} (particularly \S 3.3.3) for a nice explanation of this generality.

The unit/counit and product/coproduct for the Frobenius object correspond to cups/caps and pairs of pants, respectively, in the category $2\Cob$.  Thus, the image of any 2-cobordism $C$ under the TQFT functor can be understood by taking a handle-body decomposition of $C$.  Gluing components of this decomposition along common boundary curves corresponds to composing the corresponding morphisms.
We will extend this to give a similar characterization of a TrQFT. First, we need a new definition and some notation:

 \begin{dfn}\label{flowdef}
Let $\Gamma$ represent a tropical cobordism.  Let $\Gamma'$ denote the graph obtained from $\Gamma$ by inserting a bivalent vertex in the middle of each edge, then adding univalent vertices to compactify the non-compact edges (i.e., reinserting the vertices of $\?{\Gamma}\setminus \Gamma$).  If $E\in \Gamma$ has no vertices, then we also insert a vertex in the middle of $E$ in addition to the two univalent vertices at the ends of $E$.  A \textbf{tropical flow} on $\Gamma$ is a choice of acyclic quiver structure on $\Gamma'$. If $\Gamma\in \Hom(\Delta_1,\Delta_2)$, then we require the vertices in $\epsilon(I_1)$ to be sources, and we require the vertices in $\epsilon(I_2)$ to be sinks.
 \end{dfn}
 
 In the proof of Theorem \ref{TrQFT-Equivalent-Data} below, it will be useful to have a pictorial representation for genus $0$ tropical cobordisms which have at most one non-univalent vertex and at most two edges.  When drawing such an element of $\Hom(\Delta_1,\Delta_2)$, we will do the following:
 \begin{itemize}
     \item We write $\bullet$ to indicate a genus $0$ vertex of $\Gamma$.
     \item We use arrow tails to indicate vertices in $\epsilon(I_1)$ and arrow heads to indicate vertices in $\epsilon(I_2)$.
     \item We label each arrow tail with the corresponding $\Delta_1(i)$ for $i\in I_1$, and we label each arrow head with the corresponding $\Delta_2(i)$ for $i\in I_2$.  Note that the vector from the label always points in opposite direction of the arrow head/tail,
 \end{itemize}
 For example, $\stackrel{-n\qquad n}{\lrsource}$ represents a tropical cobordism in $\Hom([-n]\sqcup[n],\emptyset)$, while $\stackrel{0\qquad}{\leftsource}\bullet\stackrel{\quad n}{\rightsink}$ represents an element of $\Hom([0],[n])$. Note that $\stackrel{n\qquad n}{\lsourcersink}$ is the identity morphism in $\Hom([n],[n])$.  Compositions of tropical cobordisms glue arrow heads to arrow tails of the same label, e.g.,
 \begin{align}\label{1L}
     \stackrel{-n\qquad n}{\lrsource} \circ ( \stackrel{0\qquad}{\leftsource}\bullet\stackrel{\qquad -n}{\rightsink} \sqcup     \stackrel{n\qquad n}{\lsinkrsource}) = \stackrel{0\qquad}{\leftsource} \bullet \stackrel{\qquad n}{\rightsource}
 \end{align}
 
 We shall denote the monoidal identity in $\f{C}$ by $1_{\f{C}}$, and we denote the braiding morphisms $V\otimes W \rar W\otimes V$ in $\f{C}$ by $\tau_{V,W}$.
 
\begin{thm}\label{TrQFT-Equivalent-Data}
The following data is equivalent to the data of a TrQFT $F:\Trop \rar \f{C}$:
\begin{enumerate}
\item A commutative Frobenius object $\s{C}_0$ in $\f{C}$.  We denote the Frobenius trace of $\s{C}_0$ by $\Tr_0$;
\item For each $n\in N\setminus \{0\}$, an object $\s{C}_n$ of $\f{C}$.  Let $\id_n$ denote the identity morphism on $\s{C}_n$;
\item For each $n\in N\setminus \{0\}$, a pair of morphisms 
\begin{align*}
    \Tr_n:\s{C}_n \otimes \s{C}_{-n} \rar 1_{\f{C}} \qquad \mbox{and} \qquad \Tr_n^{\vee}:1_{\f{C}}\rar \s{C}_n\otimes \s{C}_{-n}.
\end{align*}
For each $n\in N\setminus\{0\}$, these morphisms must satisfy the following:
\begin{enumerate}
    \item The snake relations:
    \begin{align*}
        (\Tr_n \otimes \id_n)\circ \Tr_{-n}^{\vee} = \id_n = (\id_n \otimes \Tr_{-n})\circ \Tr_n^{\vee};
    \end{align*}
    \item Compatibility with the braiding:
\begin{align}\label{Tr-tau}
    \Tr_n \circ  \tau_{\s{C}_{-n},\s{C}_{n}}= \Tr_{-n} \qquad \mbox{and} \qquad \tau_{\s{C}_{-n},\s{C}_{n}}\circ \Tr^{\vee}_{-n}= \Tr^{\vee}_{n};
\end{align}
\end{enumerate}
\item Morphisms $\kappa_n:\s{C}_n \rar \s{C}_0$ and $\kappa_n^{\vee}:\s{C}_0 \rar \s{C}_n$ for each $n\in \?{N}\setminus \{0\}$ such that 
\begin{align}\label{TrKappa}
   \Tr_{-n}\circ (\kappa_{-n}^{\vee}\otimes \Id_{n}) &=  \Tr_0 \circ (\Id_0 \otimes \kappa_n)\qquad \hbox{ as morphisms }\s{C}_0\otimes \s{C}_n\ra 1_{\f{C}}, \hbox{and dually,}\\
    (\kappa_{-n}\otimes \Id_{n})\circ  \Tr^{\vee}_{-n} &= (\Id_0\otimes \kappa_n^{\vee})\circ \Tr^{\vee}_0\qquad\hbox{ as morphisms } 1_{\f{C}}\ra \s{C}_0\otimes \s{C}_n. 
    \label{TrKappaDual}
\end{align}
\end{enumerate}
\end{thm}

\begin{proof}
For each $n\in N$, we have an object $[n]\in \Trop$ as in \eqref{n}, and then the object $\s{C}_n\in \f{C}$ is $F([n])$.  The fact that $F([0])$ must be a commutative Frobenius object follows from Example \ref{forgetEx}.

The morphisms from the statement of the theorem are then obtained from the TrQFT as follows:
\begin{align*}
    \Tr_n &\coloneqq  F(\stackrel{n\qquad -n}{\lrsource}),\\
    \Tr_n^{\vee} &\coloneqq  F(\stackrel{n\qquad -n}{\lrsink}), \\
    \kappa_n&\coloneqq  F( \stackrel{0\qquad}{\leftsink}\bullet\stackrel{\qquad n}{\rightsource}),\\
    \kappa_n^{\vee}&\coloneqq F( \stackrel{0\qquad}{\leftsource}\bullet\stackrel{\qquad n}{\rightsink}).
\end{align*}
With these definitions, the composition on the left side of \eqref{TrKappa} is $F$ applied to the composition of \eqref{1L}.  Similar computations show that the right-hand side of \eqref{TrKappa} is also equal to $F(\stackrel{0\qquad}{\leftsource} \bullet \stackrel{\qquad n}{\rightsource})$, while both sides of \eqref{TrKappaDual} are equal to $F(\stackrel{0\qquad }{\leftsink} \bullet \stackrel{\qquad n}{\rightsink})$.  So \eqref{TrKappa} and \eqref{TrKappaDual} must hold.  The snake relations are similarly checked by composing the corresponding morphisms in $\Trop$ and then applying $F$.  The conditions in \eqref{Tr-tau} are equivalent to the requirement that $F$ must intertwine the braidings of $\Trop$ and $\f{C}$.

It remains to show that such data suffices to completely determine a TrQFT.  For any tropical cobordism $\Gamma\in \Hom(\?{\Delta}_1,\?{\Delta}_2)$, we can always assume the following by inserting new edges $E$ of direction $0$: if an edge $E\in \Gamma^{[1]}$ has nonzero direction, and if $V\in \Gamma^{[0]}$ is a vertex of $E$, then $V$ is bivalent, and the other edge containing $V$ has direction $0$ (i.e., we insert a new compact direction-$0$ edge in the middle of every flag of $\Gamma$ for which the edge has nonzero direction).  Now pick an arbitrary tropical flow on $\Gamma$.  After possibly inserting additional direction-$0$ edges, we can assume that no two sources/sinks are contained in adjacent edges of $\Gamma$.  
Now if a source or sink is contained in a bivalent vertex $V\in \Gamma^{[0]}$, we modify the flow by moving this source or sink to either one of the adjacent vertices of $\Gamma'$ so it is no longer on a vertex of $\Gamma^{[0]}$.  The resulting flow has the property that all sources and sinks are in $(\Gamma')^{[0]}\setminus \Gamma^{[0]}$ or at vertices whose adjacent edges all have direction $0$.  This tropical flow now determines a decomposition of $\Gamma$ into morphisms which correspond under $F$ to the morphisms $\Tr_n$, $\Tr_n^{\vee}$, $\kappa_n$, $\kappa_n^{\vee}$, along with endomorphisms of $\s{C}_0$ which correspond to the Frobenius algebra operations.  Composing these yields the desired morphism $F(\Gamma):F(\?{\Delta}_1)\rar F(\?{\Delta}_2)$.

We next show that such data suffices to completely determine a TrQFT.  The idea is to show that for any tropical cobordism $\Gamma\in \Hom(\Delta_1,\Delta_2)$, a choice of tropical flow on $\Gamma$ induces a decomposition of $F(\Gamma)$ into compositions of tensors of the morphisms of the form $\id_n$, $\Tr_n$, $\Tr_n^{\vee}$, $\kappa_n$, and $\kappa_n^{\vee}$, plus the braiding morphisms $\tau_{\s{C}_n,\s{C}_{n'}}$ and morphisms corresponding to the other Frobenius operations on $\s{C}_0$.  This will indeed be possible after using the equivalence relation from Def. \ref{TropCobDfn} to insert some new edges of direction $0$.

More precisely, we insert direction-$0$ edges so that if an edge $E\in \Gamma^{[1]}$ has nonzero direction, and if $V\in \Gamma^{[0]}$ is a vertex of $E$, then $V$ is bivalent, and the other edge containing $V$ has direction $0$.  I.e., we insert a new compact direction-$0$ edge in the middle of every flag $(V,E)\in \Flags(\Gamma)$ for which $u(V,E)\neq 0$.  In pictures, $$
\bullet\stackrel{n}{\edge} \qquad \rightsquigarrow \qquad \bullet\stackrel{0}{\edge} \bullet \stackrel{n}{\edge}$$
where the element written above an edge indicates the flag's direction.

Now pick an arbitrary tropical flow on $\Gamma$.  After possibly inserting additional direction-$0$ edges, we can assume that no two sources/sinks are contained in adjacent edges of $\Gamma$.  Now if a source or sink is contained in a bivalent vertex $V\in \Gamma^{[0]}$, we modify the flow by moving this source or sink to either one of the adjacent vertices of $\Gamma'$ (for $\Gamma'$ as in Def. \eqref{flowdef}) so it is no longer on a vertex of $\Gamma^{[0]}$.  The resulting flow has the property that all sources and sinks are in $(\Gamma')^{[0]}\setminus \Gamma^{[0]}$ or at vertices whose adjacent edges all have direction $0$.  It is now straightforward to see that this tropical flow indeed determines (up to re-orderings/braidings) a decomposition of $\Gamma$ into tensors of morphisms of the form $\id_n$, $\Tr_n$, $\Tr_n^{\vee}$, $\kappa_n$, and $\kappa_n^{\vee}$, plus the braiding morphisms $\tau_{\s{C}_n,\s{C}_{n'}}$ and endomorphisms of $\s{C}_0$ which correspond to the Frobenius algebra operations (as in Example \ref{forgetEx}).  Composing these yields the desired morphism $F(\Gamma):F(\Delta_1)\rar F(\Delta_2)$.

We must now check that $F(\Gamma)$ does not depend on the choice of flow. The key observation is that one can interpret \eqref{TrKappa} as saying that we can move sinks past bivalent vertices, and one can interpret \eqref{TrKappaDual} saying that we can move sources past bivalent vertices.  If $\Gamma_0$ is a subgraph (with half-edges) of $\Gamma$ such that every edge has direction $0$, then the usual correspondence between commutative Frobenius objects and TQFT's ensures that sinks and sources can be freely moved around within $\Gamma_0$.  Furthermore, possibly after inserting more direction-$0$ edges into $\Gamma_0$, we can use the Frobenius relations to insert new sources and sinks in $\Gamma_0$, and then these can be moved to elsewhere in $\Gamma$ using \eqref{TrKappa} and \eqref{TrKappaDual}.  One sees that any two tropical flows on $\Gamma$ can be related by these operations.

To see that $F$ is compatible with compositions, observe as above that all compositions can be decomposed into those contained in subgraphs of weight $0$ edges, plus compositions with graphs of the form $\kappa_n$, $\kappa_n^{\vee}$, $\id_n$, $\Tr_n$, and $\Tr_n^{\vee}$, along with the braiding morphisms.  Since \eqref{TrKappa} and \eqref{TrKappaDual} let us move sources and sinks, we see that the only non-trivial relations which are not already generated by \eqref{TrKappa} and \eqref{TrKappaDual} are those generated by the snake relations.

Finally, compatibility with the equivalence relation of Definition \ref{TropCobDfn} is clear because contracting compact direction-$0$ edges just corresponds to contracting cylinders in the TQFT associated to $\s{C}_0$.  Thus, the data indeed determines a TrQFT.
\end{proof}

We note that when $\f{C}$ is a category of (super) $R$-modules for any commutative ring $R$, the snake relations imply that $\Tr_n$ induces an injection $\s{C}_n\hookrightarrow \s{C}_{-n}^{\vee}$ for each $n$.  In particular, $\f{C}_n$ and $\f{C}_{-n}$ must have the same rank, and if this rank is finite, the above injection must be an isomorphism, i.e., $\Tr_n$ is a perfect pairing.  In this case, dualizing $\Tr_{-n}$ and composing with these isomorphisms $\s{C}_{n}^{\vee}\risom  \s{C}_{-n}$ and $\s{C}_{-n}^{\vee} \risom \s{C}_n$ uniquely determines the ``co-trace'' $\Tr_n^{\vee}$.  Explicitly,  given $z\in R$, $\Tr_n^{\vee}(z)$ is the unique element $x\otimes y \in \s{C}_{n}\otimes\s{C}_{-n}$
such that
\begin{align}\label{CotraceDef}
    \Tr_{-n}(a\otimes x)\otimes y = a
\end{align}
for all $a\in \s{C}_{-n}$.  Given this duality between traces and co-traces, we can simplify our conditions somewhat: noting that condition \eqref{TrKappa} can be phrased as adjointness with respect to the traces, we see that Condition \eqref{TrKappaDual} is the dual statement (co-adjointness with respect to the co-traces) and thus follows from \eqref{TrKappa} automatically by dualizing.

For the TrQFT's which we shall consider, we will always have $F([n])=F([-n])$.  Sufficient data for defining some such TrQFT's can be given as follows: recall (cf. \cite[\S 3.6.8]{KockTQFT}) that a commutative Frobenius object includes the data of a product $\wedge$, a coproduct $\vee$, a unit $\eta$, and a counit $\epsilon$.  This data induces a trace $\Tr\coloneqq \epsilon \circ \wedge$ and a co-trace $\Tr^{\vee}\coloneqq \vee \circ \eta$.  Thus, one natural way to get the data of the maps $\Tr_n$ and $\Tr_n^{\vee}$ is by realizing them as the trace and co-trace of a commutative Frobenius object structure on $\s{C}_n=\s{C}_{-n}$. The trace and co-trace are then dual to each other as in the finite-rank super $R$-module cases above, and so we again have that Condition \eqref{TrKappa} automatically implies Condition \eqref{TrKappaDual} by dualizing.  Furthermore, the compatibility with the braiding as in \eqref{Tr-tau} is also automatic now from the commutativity of the Frobenius structure.  We thus see the following:

\begin{cor}\label{Sufficient}
Let $\?{N}=N/\{\pm \id\}$ be the set obtained by identifying $n$ with $-n$ for each $n$ in $N$.  The following data is sufficient to give a TrQFT $F:\Trop \rar \f{C}$:
\begin{itemize}
   \item For each $n\in \?{N}$, a commutative Frobenius object $\s{C}_n\in \f{C}$,
   \item A morphism $\kappa_n:\s{C}_n \rar \s{C}_0$ for each $n$ (the identity if $n=0$), and 
   \item A morphism $\kappa_n^{\vee}:\s{C}_0 \rar \s{C}_n$ which is adjoint to $\kappa_n$ with respect to the Frobenius traces (meaning it satisfies \eqref{TrKappa}).
\end{itemize}
\end{cor}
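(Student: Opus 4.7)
The plan is to reduce the corollary to Theorem \ref{TrQFT-Equivalent-Data} by showing that the hypothesized data produces all the structure required there, and that the stated adjointness condition \eqref{TrKappa} automatically forces its dual \eqref{TrKappaDual}. First, from each commutative Frobenius object $\s{C}_n$ in $\f{C}$ I extract a trace $\Tr_n:=\varepsilon_n\circ \wedge_n:\s{C}_n\otimes \s{C}_n\to 1_{\f{C}}$ and a cotrace $\Tr_n^{\vee}:=\vee_n\circ \eta_n:1_{\f{C}}\to \s{C}_n\otimes \s{C}_n$. Since the Frobenius form is non-degenerate, the pair $(\Tr_n,\Tr_n^{\vee})$ exhibits $\s{C}_n$ as self-dual in $\f{C}$, satisfying the snake (zig-zag) identities $(\Tr_n\otimes \Id_n)\circ(\Id_n\otimes \Tr_n^{\vee})=\Id_n$ and $(\Id_n\otimes \Tr_n)\circ(\Tr_n^{\vee}\otimes \Id_n)=\Id_n$. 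These are exactly the ingredients needed in Theorem \ref{TrQFT-Equivalent-Data}, and the commutative Frobenius object required for $n=0$ is $\s{C}_0$ itself.

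It then remains only to check that condition \eqref{TrKappaDual} holds. Since \eqref{TrKappa} is taken as hypothesis, my plan is to deduce \eqref{TrKappaDual} purely from Frobenius self-duality. Explicitly, I will compose both sides of the desired equality \eqref{TrKappaDual} with $\Tr_n\otimes \Tr_0$ (after tensoring with an identity on $\s{C}_n\otimes \s{C}_0$), so that the statement becomes an equality of morphisms $\s{C}_n\otimes \s{C}_0\to 1_{\f{C}}$. Using the snake identity for $\Tr_n^{\vee}$ on the left-hand side, this composite simplifies to $\Tr_0\circ (\kappa_n\otimes \Id_0)$; using the snake identity for $\Tr_0^{\vee}$ together with the symmetry of the Frobenius traces (graded-commutativity in the super setting), the right-hand side simplifies to $\Tr_n\circ(\Id_n\otimes \kappa_n^{\vee})$. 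These two maps coincide by the hypothesized adjointness \eqref{TrKappa}, so the composites agree. Finally, since $\Tr_n\otimes \Tr_0$ is non-degenerate (as the tensor product of non-degenerate Frobenius pairings), the original morphisms $1_{\f{C}}\to \s{C}_n\otimes \s{C}_0$ themselves must agree, yielding \eqref{TrKappaDual}.

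The only point requiring genuine care is the second paragraph: tracking the Koszul signs when the symmetry isomorphism is used to move $\kappa_n^{\vee}$ or $\kappa_n$ across tensor factors, so that the reduction really is symmetric in the two sides. In the super setting this is handled by observing that the Frobenius traces preserve $\bb{Z}/2\bb{Z}$-parity, so the braiding $\tau$ introduces no extra signs in the relevant compositions — this is precisely the reason for the parity-preservation assumption built into the squaring construction of \eqref{squaring} and the discussion following \eqref{CL}. Once this is in place, the verification of \eqref{TrKappaDual} is formal, and Theorem \ref{TrQFT-Equivalent-Data} produces the TrQFT $F:\Trop\to \f{C}$.
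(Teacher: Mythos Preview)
Your proposal is correct and follows essentially the same approach as the paper: extract $\Tr_n$ and $\Tr_n^{\vee}$ from the Frobenius structure on each $\s{C}_n$, and then deduce \eqref{TrKappaDual} from \eqref{TrKappa} via the self-duality of Frobenius objects. The paper states this last step in a single phrase (``Condition \eqref{TrKappaDual} follows from \eqref{TrKappa} using the self-duality of Frobenius objects''), whereas you spell out the snake-identity computation explicitly; your care with the symmetry of the traces and the parity issue in the super case is appropriate and matches the paper's implicit reasoning.
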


We will use Corollary \ref{Sufficient} to construct the TrQFT's of interest to us here.

\begin{rmk}\label{logTQFT}
As noted in Example \ref{forgetEx}, we have a forgetful morphism $\forget:\Trop\rar 2\Cob$.  On the other hand, $\Trop$ is roughly a version of $2\Cob$ in which the circles are colored by elements of $N$ (indeed, $\Trop$ is a colored PROP with $N$ as the set of colors), and in which some cylinders (corresponding to positive-weight edges) may act non-trivially.  To explain why these cylinders/edges should be allowed to act non-trivially, we suggest that the cylinder associated to a half-edge of nonzero weighted direction $n$ should be viewed as being semi-infinite, with boundary circle living on a toric divisor $D_n$ at infinity.  Indeed, this accurately describes the log curves whose tropicalizations are $\Gamma$.  The Frobenius algebras $\s{C}_n$ which we will define below can be viewed as (extensions of) the spaces of incidence conditions which one can impose on these punctures at infinity.
\end{rmk}

\subsection{Defining the Multiplicity TrQFT}\label{TrQFT_Const}

For each $n\in N$, let $M_n\coloneqq n^{\perp}\subset M$.  Note that $M_n$ is not affected by replacing $n$ with $-n$, so we can use the projection $\?{n}\in \?{N}$.  We take
\begin{align*}
    \s{C}_{\?{n}}\coloneqq \s{C}_{M_{n}},
\end{align*}
where $\s{C}_{M_{n}}$ is the graded-commutative Frobenius algebra $\Lambda^*(M_{n}\oplus M_{n})$ as in \eqref{CL}.  Recall that the trace is defined by projecting onto $\Lambda^{\topp}(M_n\otimes M_n)$ and identifying this with $\bb{Z}$ by taking $\Theta_{{n}}^{\Box}$ to $1$, where $\Theta_{{n}}$ is either choice of primitive element of $\Lambda^{\topp} M_{n}$.

The inclusion $M_{n}\hookrightarrow M$ induces an inclusion of graded-commutative algebras $i_{\?{n}}:\s{C}_{\?{n}}\hookrightarrow \s{C}_0$ (not respecting the traces).  We define $\kappa_0\coloneqq \id$, and for $\?{n}\neq 0$, we define $$\kappa_{\?{n}}=|{n}|^2i_{\?{n}}:\s{C}_{\?{n}}\hookrightarrow \s{C}_0,$$ where ${n}$ is either lift of $\?{n}$ (cf. Notation \ref{TropNotation}).  We will from now on identify $\s{C}_{\?{n}}$ with its image under $\kappa_{\?{n}}$, i.e., for nonzero $\?{n}$, \begin{equation}
\label{eq-include-n}
\s{C}_{\?{n}}=\iota_{(n,0)\wedge (0,n)}(\Lambda^{*} (M\oplus M)).
\end{equation} 

Finally, we define the adjoint maps: $\kappa^{\vee}_{0}\coloneqq \Id$, and for $\?{n}\neq 0$, 
\begin{align*}
    \kappa^{\vee}_{\?{n}}:\s{C}_0 &\rar \s{C}_{\?{n}} \\
    a &\mapsto \iota_{(n,0)\wedge (0,n)}(a).
\end{align*}
To check that these are indeed adjoints, first note that, with \eqref{eq-include-n} understood, $\Theta^{\Box}_{{n}} = \iota_{(n,0)\wedge (0,n)}(\Theta_0^{\Box})$.  Given $a\in \s{C}_{{0}}, b\in \s{C}_{\?{n}}$, set $k=\Tr_{0}(a,\kappa_{\?{n}}(b))$, i.e., $a\wedge \kappa_{\?{n}}(b) = k\Theta_0^{\Box}$.  Then we have
\begin{align*}
    \kappa_{\?{n}}^{\vee}(a)\wedge b &= \iota_{(n,0)\wedge (0,n)}(a)\wedge b \\
                                     &= \iota_{(n,0)\wedge (0,n)}(a\wedge \kappa_{\?{n}}(b)) \\
                                     &= \iota_{(n,0)\wedge (0,n)}(k\Theta_0^{\Box}) \\&= k\Theta_{{n}}^{\Box} 
\end{align*}
hence $\Tr_{\?{n}}(\kappa_{\?{n}}^{\vee}(a), b)= k$, as desired.  We thus obtain a TrQFT via Corollary~\ref{Sufficient}.  We denote the corresponding functor by 
\begin{align*}
F_{\Mult}:\Trop\rar s\bb{Z}\Mod.    
\end{align*}

\subsection{The Main Theorem}

Now let $\Gamma$ be a rigid tropical curve in $\f{T}_{g,\Delta}(\A,\Psi)$.  Its degree determines an object $(I,\?{\Delta})\in \Trop$, and $\Gamma$ (up to type and the negation-action) can be viewed as a morphism in $\Hom((I,\?{\Delta}),(\emptyset,~))$.  Applying a TrQFT $F$, we have
\begin{align*}
    F(\Gamma)\in \Hom\left(\bigotimes_{i\in I} \s{C}_{{\Delta(i)}}, \bb{Z}\right).
\end{align*}

For each $i\in I$, we have an affine incidence condition $A_i\subset N_{\bb{R}}$, say with weight $w_i$.  We take $\alpha_i$ to be an associated element of $\Lambda^* M$ as in Lemma \ref{Trop-Wedge}, that is,
\begin{align}\label{alphai}
    \alpha_i\in \Lambda^{\codim(A_i)} M \subset \Lambda^* M
\end{align} to be the unique-up-to-sign index $w_i$ element which restricts to $0$ on $\LL(A_i)$.  Note that since $\Delta(i)\in \LL(A_i)$, $\alpha_i$ is in fact contained in $\Lambda^* M_{\Delta(i)} \subset \Lambda^* M$, so $\alpha_i^{\Box}\in \s{C}_{{\Delta(i)}}$.
 Finally, define
\begin{align*}
    \gamma\coloneqq  \bigotimes_{i\in I} \alpha_i^{\Box} \in \bigotimes_{i\in I} \s{C}_{{\Delta(i)}}.
\end{align*}

\begin{thm}\label{MainThm}
For $\Gamma$ and $\gamma$ as above,
\begin{align}\label{MainThmEqn}
    (\Mult(\Gamma))^2 = (F_{\Mult}(\Gamma))(\gamma).
\end{align}
\end{thm}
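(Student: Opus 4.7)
The plan is to compute both sides as traces of wedge products in appropriate exterior algebras and match them via the tropical intersection-theoretic description of $\Mult(\Gamma)$.

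First I reduce the left-hand side. Combining Proposition~\ref{MultTropIntersection} with Lemma~\ref{Trop2Wedge}, and using the identity $(w\alpha)^\Box=w^2\alpha^\Box$ to pull the edge weights out of the $\Box$'s, I obtain
\[
\Mult(\Gamma)^2=\Bigl(\prod_{E\in \Gamma^{[1]}_c}w(E)^2\Bigr)\,\Tr_L\!\Bigl(\bigwedge_{E\in \Gamma^{[1]}_c}\!\alpha_{[\Delta_E]}^\Box\wedge\bigwedge_{i\in I}\alpha_i^\Box\Bigr),
\]
where $L=\bigoplus_{V\in \Gamma^{[0]}}N$ is the lattice underlying the domain of $\Phi$.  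The canonical decomposition $L\oplus L=\bigoplus_V(N\oplus N)$ induces a tensor factorization of the ambient exterior algebra into one copy of $\s{C}_0$ per vertex, under which $\Tr_L$ splits as a product of per-vertex traces $\Tr_0^{(V)}$.

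Next I compute the right-hand side of the theorem.  Choose any tropical flow on $\Gamma$ (by Theorem~\ref{TrQFT-Equivalent-Data} the answer is flow-independent) and apply the corresponding flow-decomposition.  Then $F_{\Mult}(\Gamma)(\gamma)$ is expressed as a composition built from (i) inclusions $\kappa_{\Delta(i)}$ applied to each $\alpha_i^\Box$ at unbounded edges, (ii) operators $\kappa_n\circ\kappa_n^\vee=|n|^2\iota_{(n,0)\wedge(0,n)}$ at each compact edge of direction $n$, and (iii) the commutative Frobenius operations in $\s{C}_0=\Lambda^*(M\oplus M)$ at the original vertices of $\Gamma$.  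These per-vertex Frobenius operations are precisely what appears in the factorization of $\Tr_L$ from the previous step.

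The match then proceeds edge-by-edge and vertex-by-vertex.  Under the vertex tensor decomposition, the diagonal class $\alpha_{[\Delta_E]}^\Box$ attached to a compact edge $E$ of direction $n$ with endpoints $V_1,V_2$ corresponds exactly to the insertion produced by $\kappa_n\circ\kappa_n^\vee$ bridging $\s{C}_0^{(V_1)}$ and $\s{C}_0^{(V_2)}$, while the unbounded-edge term $\alpha_i^\Box$ corresponds to the output of $\kappa_{\Delta(i)}$ in the relevant vertex factor.  The $\prod_{E\in\Gamma^{[1]}_c}w(E)^2$ prefactor on the tropical side then exactly balances the $|n|^2$ scalings built into the $\kappa_n$ maps appearing in the TrQFT composition.

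The main obstacle is making the diagonal-class identification rigorous: one must verify that $\alpha_{[\Delta_E]}^\Box$ really equals, under the tensor decomposition of $\s{C}_L$, the element produced by the TrQFT's edge-splitting.  This amounts to a direct computation linking the primitive diagonal class in $L\oplus L$ to contraction by $(n,0)\wedge(0,n)$ distributed across the two vertex-factors.  For higher-genus $\Gamma$, loops correspond to handles in the underlying $2$D cobordism and contribute Casimir elements through the Frobenius structure on $\s{C}_0$; these can be accommodated by inductively splitting edges along the cycles, reducing to the tree-like case handled above.  Throughout, the $\Box$-squaring trick keeps sign ambiguities under control.
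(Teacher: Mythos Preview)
Your approach is essentially the same as the paper's: both reduce the left-hand side via Proposition~\ref{MultTropIntersection} and Lemma~\ref{Trop2Wedge}, use the K\"unneth decomposition $\s{C}_L\cong\bigotimes_V\s{C}_0$, and then identify the diagonal classes $\alpha_{[\Delta_E]}^\Box$ with the TrQFT edge operations while matching the $w(E)^2$ factors against the $|n|^2$ scalings in $\kappa_n$.

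The one substantive difference is the choice of flow. The paper does not pick an arbitrary flow and then patch up cycles afterward; it fixes the specific flow in which the midpoint of every compact edge and the end of every non-compact edge is a source, and every vertex of $\Gamma$ is a sink. With this choice, each compact edge contributes the cotrace $\Tr_n^\vee(1)\in\s{C}_n\otimes\s{C}_n$ followed by $\kappa_n\otimes\kappa_n$, and the key identification becomes the standard fact that the coproduct of $1$ in $\s{C}_{u_E}$ is precisely the K\"unneth decomposition of the diagonal class in $(N_{\bb{R}}/\bb{R}u_E)^2$. This works uniformly in all genera with no separate argument for loops. Your description of the edge operation as $\kappa_n\circ\kappa_n^\vee$ ``bridging'' two vertex factors is the dual picture (flow through the edge rather than a source at its midpoint); it is correct for trees but forces the ad hoc ``split edges along cycles'' step in higher genus, which is exactly what the paper's flow choice renders unnecessary.
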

\begin{proof}
Let $L$ be the lattice $\prod_{V\in \Gamma^{[0]}} N$.  Note that we can view $\Lambda^*(N\oplus N)$ as the cohomology ring of the torus $(N_{\bb{R}}\oplus N_{\bb{R}})/(N\oplus N)$, with Poincar\'e duality corresponding to the identification with the dual lattice induced by the Frobenius trace.  Furthermore, given a rational-slope subspace $A\subset N_{\bb{R}}$, the element $\alpha_A^{\Box}$ of Lemma \ref{Trop-Wedge} and \eqref{squaring} is the same as the element obtained by taking the Poincar\'e dual of the homology class of $\?{A\oplus A}\subset (N_{\bb{R}}\oplus N_{\bb{R}})/(N\oplus N)$.  The K\"unneth theorem gives us a graded isomorphism 
\begin{align}\label{KunnethL}
\Lambda^* (L\oplus L) \cong \bigotimes_{V\in \Gamma^{[0]}} \Lambda^*(N\oplus N).
\end{align}
Recall that the cup product on the cohomology of a space $X$ can be viewed as the map induced on cohomology by the composition of maps of cochain complexes
\begin{align*}
 C^*(X) \times   C^*(X) \rar C^*(X\times X) \stackrel{\Delta^*}{\rar} C^*(X)
\end{align*}
where the first map is the K\"unneth map, and the second map is the pullback by the diagonal map.  Our Frobenius algebra coproduct is obtained by applying Poincar\'e duality, applying the dual to the cup product, and then applying Poincar\'e duality again, so it follows that for each compact edge $E\in \Gamma$, the coproduct of $1\in \s{C}_{{u_E}}$ is equal to the Poincar\'e dual of the K\"unneth decomposion of the diagonal class in $H_*((N_{\bb{R}}/\bb{R}u_E\oplus N_{\bb{R}}/\bb{R}u_E)/(N/\bb{Z}u_E \oplus N/\bb{Z}u_E))$  Thus, $\vee(1)\in \s{C}_{u_E}\otimes \s{C}_{u_E}$ is the element corresponding to the tropical class $[\Delta_E]^{\Box}$.  Under the inclusion \eqref{eq-include-n}, $1\in \s{C}_{{u_E}}$ corresponds to $|u_E|^2$ in $\s{C}_0$, and so applying $\kappa_{u_E}\otimes \kappa_{u_E}$ followed by the isomorphism of \eqref{KunnethL}, we obtain the element associated to the tropical class $|u_E|^2([\Delta_E],0)\oplus (0,[\Delta_E])$ in $L_{\bb{R}}\oplus L_{\bb{R}}$.  Theorem \ref{MainThm} now follows using Lemma  \ref{Trop2Wedge} and Proposition \ref{MultTropIntersection} by treating the middle of every compact edge and the end of every non-compact edge (viewed as vertices of $\Gamma'$) as a source and treating every vertex of $\Gamma$ as a sink.
\end{proof}

\subsection{Explicit description of the coproduct}\label{coproduct}

Here we will clarify how a (graded)-commutative Frobenius $R$-algebra $A$ with product $\wedge$ and counit $\epsilon$ (so $\Tr=\epsilon\circ \wedge$) determines a coproduct $\vee$, and we express the coproduct of $1$ explicitly in our setup.\\[-2mm]

\noindent
\begin{minipage}[b]{0.6\linewidth}
\hspace{15pt}Given an element $z\in A$, the coproduct $\vee$ of $z$ is the unique element \begin{align*}
    \vee(z)=\sum_i x_i\otimes y_i \in A\otimes A
\end{align*} such that, for all $a\in A$,
\begin{align}\label{CoprodEqn}
    \sum_i \Tr(a\otimes x_i)\otimes y_i = a\wedge z
\end{align}
\end{minipage}
\hspace{0.5cm}
\begin{minipage}[b]{0.35\linewidth}
\centering
\includegraphics[width=\textwidth]{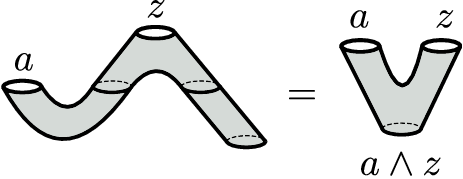}\\ \ \\
\end{minipage}\\
This is illustrated by the equivalence of the two cobordisms on the right.  We note that this agrees with \eqref{CotraceDef}, which was the case $z\in \eta(R)$.

Now let us specialize to our setup where $A=\s{C}_{\?{n}}$.  Let $e_1,\ldots,e_k$ be a basis for $\Lambda^*(M_n\oplus M_n)$ such that $e_1 \wedge \cdots \wedge e_k = \Theta_{{n}}^{\Box}$.  Given $I=\{i_1,\ldots,i_\ell\}\subset \{1,\ldots,k\}$ with $i_1<\ldots<i_{\ell}$, let $e_I\coloneqq e_{i_1}\wedge \cdots \wedge e_{i_{\ell}}$.  Then we claim that
\begin{align}\label{vee1}
    \vee(1) = \sum_{I_1\sqcup I_2 = \{1,\ldots k\}} (-1)^{\sign(I_2,I_1)} e_{I_1} \otimes e_{I_2},
\end{align}
where the sum is over all decompositions of $\{1,\ldots,k\}$ into disjoint subets $I_1$ and $I_2$, and $\sign(I_2,I_1)$ is the sign of the shuffle taking $(1,\ldots,k)$ to $(I_2,I_1)$. To check this, let $a=e_J$ for arbitrary $J\subset \{1,\ldots,k\}$, and consider
\begin{align*}
    \sum_{I_1\sqcup I_2=\{1,\ldots,k\}} (-1)^{\sign(I_2,I_1)} \Tr(e_J\otimes e_{I_1})  e_{I_2}.
\end{align*}
The factor $\Tr(e_J\otimes e_{I_1})$ is clearly nonzero if and only if $J=I_2$, and in this case we have $\Tr(e_J\wedge e_{I_1}) = (-1)^{\sign(I_2,I_1)}$, as desired.

Similarly, one sees directly from \eqref{CoprodEqn} that
\begin{align}\label{CoprodTheta}
    \vee(\Theta^{\Box}_n)=\Theta_n^{\Box}\otimes \Theta_n^{\Box}.
\end{align}

\subsection{A genus $1$ example}\label{gen1ex}

Consider a genus $1$ tropical curve in a plane as in Figure \ref{fig:g1-curve-split}.  Here, the three vertices $V_1,V_2,V_3$ are each $4$-valent, each contained in one contracted edge $E_i$, $i=1,2,3$ respectively (not pictured) satisfying a $\psi$-class condition and a line condition $A_i$ parallel to $v_i^{\perp}$ for $v_1=(b,-a)$, $v_2=(d,-c)$, and $v_3=(f,-e)$.  The conditions on the other non-compact edges are all taken to be trivial (i.e., corresponding to all of $N_{\bb{R}}$).  The edges all have weight $1$, and the directions of the edges are all determined by $u_{E_{12}}=(1,0)$, $u_{E_{13}}=(0,1)$, and $u_{E_{23}}=(1,-1)$.  We will illustrate our different methods for computing the multiplicity in this example.

\begin{figure}[htb]
    \centering
    \includegraphics[width=.96\textwidth]{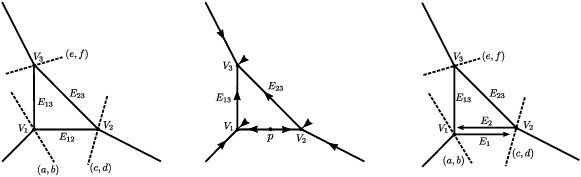}
    \caption{\textbf{Left}: A rigid genus one curve that has a $\psi$-class condition on each of three interior markings which also satisfy line-conditions (dashed). \textbf{Center}: Equipping the same curve with a flow (here the contracted edges are pictured as arrow-heads pointing to the vertices).  \textbf{Right}: Splitting $E_{12}$ at its midpoint $p$ and extending the resulting non-compact edges to infinity yields a non-rigid genus $0$ curve.
    \label{fig:g1-curve-split}}
\end{figure}

\subsubsection{Computing multiplicity using a determinant}

Using the standard basis $e_1,e_2$ for $N=\ZZ^2$, the map 
$$
\Phi:N_{V_1}\oplus N_{V_2}\oplus N_{V_3}
\ra 
\frac{N}{\ZZ u_{E_{12}}}\oplus \frac{N}{\ZZ u_{E_{13}}} \oplus \frac{N}{\ZZ u_{E_{23}}} \oplus \frac{N}{\ZZ (a,b)} \oplus \frac{N}{\ZZ (c,d)} \oplus \frac{N}{\ZZ (e,f)}
$$
as in \eqref{D} is given by the matrix that acts on row vectors
\begin{align*}
\Phi=
\left( \begin{matrix}
&1&&-b\\
1&&&a\\
&&1&&-d\\
-1&&1&&c\\
&-1&-1&&&-f\\
&&-1&&&e
\end{matrix}\right)
\end{align*}
and it has determinant $\det(\Phi)= ade+adf - bce-bde$.  Since each edge has weight $1$ and $\langle V_i\rangle=1$ for each $i=1,2,3$, this tells us that 
\begin{align}\label{MGamma}
    \Mult(\Gamma)&=|ad(e+f) - be(c+d)|.
\end{align}

\begin{rmk}
Since $ade+adf - bce-bde$ does not factor, there exists no formula for $\f{D}_{\Gamma}$ or  $\Mult(\Gamma)$ as a product of vertex multiplicities.  This is in contrast to planar tropical curves in the absence of $\psi$-classes \cite{Mi}.  Also, we will see in Corollary \ref{SplitCor} that for $\Gamma$ of genus zero, $\f{D}_{\Gamma}$ can always be expressed as a product of vertex multiplicities divided by a product of edge multiplicities, with the edge multiplicities always equaling $1$ in dimension $2$.  We note that this phenomenon of not factoring into a product of vertex multiplicities was also observed for refined elliptic tropical descendant invariants in \cite{SS}.
\end{rmk}

\subsubsection{Computing multiplicity using the TrQFT}\label{CompTrQFT}

We now demonstrate how this can be computed using the TrQFT approach of Theorem \ref{MainThm}.  We take $V_3$ to be a sink for our tropical flow, and for our sources we take the midpoint $p$ of $E_{12}$, along with all the non-compact edges.

At $p$, we have the associated Frobenius algebra $$\s{C}_{{e_1}}=\Lambda^*(M_{e_1}\oplus M_{e_1}),$$ where we recall that $M_{e_1}$ means $e_1^{\perp}=\bb{Z}\langle e_2^*\rangle \subset M$.  Consider the basis $f_1=(e_2^*,0)$, $f_2=(0,e_2^*)$ for $M_{e_1}$.  Then by \eqref{vee1}, we have $$\vee(1)=f_1\wedge f_2\otimes 1-f_1\otimes f_2 +f_2\otimes f_1 + 1\otimes f_1\wedge f_2$$ in $\s{C}_{{e}_1}$.  Let us denote the terms of this sum by $x_i\otimes y_i$, $i=1,2,3,4$, respectively.

Now, using the designated tropical flow and applying the operations from the construction of $F_{\Mult}$ to the incidence conditions and $\vee(1)$ as above,
we have that
\begin{align*}
    \Mult(\Gamma)^2 = \sum_{i=1}^4 \Tr\left[\iota_{(0,1)^{\Box}}(x_i \wedge v_1^{\Box}) \wedge v_3^{\Box} \wedge \iota_{(-1,1)^\Box}(y_i \wedge v_2^{\Box})  \right].
\end{align*}
One computes $\Theta_0^{\Box}=-e_1\wedge e_2\wedge e_3\wedge e_4$, and so the trace $\Tr$ is negative the determinant.  
One then computes the contributions from $i=1,2,3,4$ to be $b^2e^2(c+d)^2$, $-abde(c+d)(e+f)$, $-abde(c+d)(e+f)$ again, and $a^2d^2(e+f)^2$, respectively.  The resulting sum is indeed the square of the expression for $\Mult(\Gamma)$ from \eqref{MGamma}.

\subsubsection{Computing multiplicity using a splitting formula, up to signs}\label{SplitG1}

We note one more possible approach, employing a sort of splitting formula.  We use a flow as in the TrQFT approach \S \ref{CompTrQFT}, and we ``split'' the tropical curve at $p$, removing this midpoint and extending the two newly non-compact edges to infinity.  Label these $E_1$ and $E_2$ as in the right-most part of Figure \ref{fig:g1-curve-split}.

Now, let $A_1$ be an affine line passing near $p$ and parallel to $\bb{R}e_1$, and let $A_2=N_{\bb{R}}$.  If, in addition to the previously imposed conditions, we impose $A_1$ on $E_1$ and $A_2$ on $E_2$, the resulting rigid tropical curve has multiplicity $|be(c+d)|$.  If we instead impose $A_1$ on $E_2$ and $A_2$ on $E_1$, the resulting multiplicity is $|ad(e+f)|$.  These two can of course be combined, with some careful sign choices, to yield the multiplicity as given in \eqref{MGamma}.

Such a splitting is indeed always possible, and in the next section we will prove and apply this to genus $0$ cases.  The problem with higher-genus cases, as discussed further in Remark \ref{HigherGSplitting}, is that we do not have a nice general procedure for determining the correct signs when combining the multiplicities as above.  These sign issues are related to the necessity of the squaring trick employed in the construction of $F_{\Mult}$.

\subsection{A geometric interpretation of the squared lattices}\label{Speculation}

In our construction of $F_{\Mult}$, we replaced the lattices $M_n\coloneqq n^{\perp}\cap M$ with their squares $M_n\oplus M_n$, and we applied the squaring/diagonal operation $\Box$ defined in \eqref{squaring} to map simple elements of $\Lambda^* M_n$ to simple elements of $\Lambda^*(M_n\oplus M_n)$.  We offer here a geometric interpretation for this setup, along with some speculation on potential broader applications.

Let us take $\kk=\bb{C}$.  Recall that the tropical curve counts of \eqref{GWtrop} correspond to counts of log curves in a toric variety with cocharacter lattice $N$, hence dense torus orbit $N\otimes \bb{C}^*$.  We identify this with $T^* N_{\bb{R}}/N$, where the quotient is via the identification of $N$ with the lattice of integer cotangent vectors.  Alternatively, we may view this as $$(N_{\bb{R}}\oplus N_{\bb{R}})/(0,N).$$  We identify $M\oplus M$ with the dual of $N\oplus N$ in the natural way, so elements of $M\oplus M$ cut out linear subspaces of $N_{\bb{R}}\oplus N_{\bb{R}}$.  Simple elements of $\s{C}_0=\Lambda^*(M\oplus M)$ thus determine subspaces of $N\otimes \bb{C}^*$.  In particular, for $\alpha_A$ as in \eqref{alphaA}, the closure of the subspace cut out by $\alpha_A^{\Box}$ is indeed a representative of the algebraic cycle associated to the rational-slope affine-linear space $A\subset N_{\bb{R}}$.

More generally, let $D_n$ denote the dense torus orbit of the toric boundary stratum corresponding to a ray through $n$, or for $n=0$, let $D_0$ be the dense torus orbit $N\otimes \bb{C}^*$ considered above.  Then elements of $\Lambda^*(M_n\oplus M_n)$ as in \eqref{TrQFT_Const} cut out subspaces of $D_n$.  Again, $\alpha_{A_i}^{\Box}$ cuts out the algebraic subspace of $D_{\Delta(i)}$ associated to $A_i$.

One could imagine imposing conditions on, say, the norms or phases of marked points of log curves, and such conditions would correspond to elements of $\Lambda^* (M_n\oplus M_n)$ that are not of the form $\alpha^{\Box}$ for any $\alpha$.  It would be interesting to find tropical correspondence theorems allowing for such conditions and using our TrQFT to compute multiplicities.  Indeed, such conditions on phases appear in the work on log symplectic cohomology of Ganatra-Pomerleano \cite{GP,GP2}, which is still being further investigated by Gross-Pomerleano-Siebert \cite{GPomS}.

\section{A splitting formula for genus 0}\label{SplitSection}

\subsection{The Frobenius subalgebra of tropical classes}

For each $\?{n}\in N/\pm\id$, note that we have a Frobenius subalgebra\footnote{We say $A$ is a Frobenius subalgebra of $B$ if $A$ and $B$ are Frobenius algebras, $A$ is a subalgebra of $B$, and the Frobenius trace on $A$ is the restriction of the Frobenius trace on $B$.}
\begin{align*}
    \s{C}_{{n}}^{\Box}\subset \s{C}_{{n}}
\end{align*}
generated by elements of the form $\alpha^{\Box}$ for $\alpha$ a simple element of $\Lambda^* M_{n}$.  We refer these generators as the \textbf{tropical classes} since they are precisely the classes $\alpha_A^{\Box}$ for rational-slope affine linear subspaces $A\subset N_{\bb{R}}$.  Note that  $\s{C}_{{n}}^{\Box}$ is commutative, not just graded-commutative.  The maps $\kappa_{{n}}$ and $\kappa_{{n}}^{\vee}$ clearly restrict to well-defined adjoint maps between $\s{C}_{{n}}^{\Box}$ and $\s{C}_{0}^{\Box}$, and so by Proposition \ref{Sufficient}, we obtain a new TrQFT 
\begin{align*}
    F_{\Mult}^{\Box}:\Trop \rar \bb{Z}\Mod,
\end{align*}
with $\bb{Z}\Mod$ denoting the category of $\bb{Z}$-modules (not super $\bb{Z}$-modules).

Now suppose we have $\Gamma$ and $\gamma$ as in the setup of Theorem \ref{MainThm}.  The following says that the theorem still holds with $F_{\Mult}^{\Box}$ in place of $F_{\Mult}$ so long as $\Gamma$ is genus $0$.
\begin{thm}\label{FBoxThm}
   Suppose $\Gamma$ has genus $0$.  Then
   \begin{align*}
       (\Mult(\Gamma))^2 = (F_{\Mult}^{\Box}(\Gamma))(\gamma).
   \end{align*}
\end{thm}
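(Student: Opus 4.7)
The plan is to exploit the flow-independence of $F_{\Mult}(\Gamma)$ (guaranteed by Theorem \ref{TrQFT-Equivalent-Data}) by picking, in the genus $0$ setting, a tropical flow that never invokes a coproduct on any $\s{C}_{\?n}$. Concretely, I would first apply the normalization from the proof of Theorem \ref{TrQFT-Equivalent-Data}, inserting a compact direction-$0$ edge into every flag incident to a nonzero-direction edge. The resulting graph is still a tree (genus $0$) and splits its vertices into two classes: the original multi-valent ones, now incident only to direction-$0$ edges, and new bivalent ones, each incident to exactly one direction-$0$ and one nonzero-direction edge.

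Next I would choose any original multi-valent vertex $V_*$ as a root, direct every edge of the modified $\Gamma$ toward $V_*$, make every inserted midpoint pass-through, and take the endpoints of non-compact edges as the sources. Using the tree structure, this is acyclic, has its only quiver-sink at $V_*$ (whose adjacent edges are all of direction $0$), and has no source or sink at any midpoint of a compact edge. Thus it is a legitimate tropical flow in the sense of Theorem \ref{TrQFT-Equivalent-Data}.

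With this flow, the decomposition of $F_{\Mult}(\Gamma)$ uses only: input of the tropical classes $\alpha_i^\Box \in \s{C}_{\?{\Delta}(i)}^\Box$ at sources; the maps $\kappa_{\?n}$ and $\kappa_{\?n}^\vee$ across the bivalent vertices; the wedge product on $\s{C}_0$ at the multi-valent vertices to merge the child contributions; and the trace $\Tr_0$ at $V_*$. Crucially, no coproduct on any $\s{C}_{\?n}$ is invoked (this would require a source at an edge midpoint), so the non-tropical terms of $\vee(1)$ computed in \eqref{vee1} never enter. Each remaining elementary morphism restricts to the tropical subalgebras: $\kappa_{\?n}$, $\kappa_{\?n}^\vee$, and $\Tr_0$ do so by construction of $F_{\Mult}^\Box$, while the wedge product preserves tropical classes via the identity $\alpha^\Box \wedge \beta^\Box = (\alpha \wedge \beta)^\Box$ that follows from unpacking the definition of $\Box$ in \eqref{squaring}. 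Hence computing with this flow gives $F_{\Mult}(\Gamma)(\gamma) = F_{\Mult}^\Box(\Gamma)(\gamma)$, and Theorem \ref{MainThm} finishes the proof.

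The main obstacle, and the place where the genus-$0$ hypothesis is essential, is step two: in higher genus, any acyclic flow must place a source or sink at the midpoint of at least one compact edge to break each cycle, which forces a coproduct $\vee_{\?n}$ into the computation and thereby introduces the non-tropical terms from \eqref{vee1}. This is precisely the phenomenon warned about in the discussion preceding the theorem, and is why the statement is restricted to genus $0$.
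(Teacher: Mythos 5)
Your proposal is correct and takes essentially the same approach as the paper: choose a flow on the genus-$0$ (hence tree-structured) curve directed towards a single sink vertex, observe that no coproducts are needed, and note that the remaining operations (products, $\kappa_{\?n}$, $\kappa_{\?n}^{\vee}$, trace) restrict to the tropical Frobenius subalgebras $\s{C}_{\?n}^{\Box}$. Your exposition is more explicit about the flow construction, and the only quibble is a minor imprecision in the last paragraph — in higher genus, the obstruction need not be a source/sink at an edge midpoint specifically (a vertex with multiple outgoing edges would also force a coproduct) — but this is side commentary and does not affect the proof itself.
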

\begin{proof}
Since $\Gamma$ has genus $0$, we can define a flow on $\Gamma$ with any choice of vertex as the unique sink.  For such a flow, every vertex other than the sink has a unique edge flowing out of it.  It follows that $F_{\Mult}(\Gamma)(\gamma)$ and $F_{\Mult}^{\Box}(\Gamma)(\gamma)$ can be computed using only the products, the maps $\kappa_{{n}}$ and $\kappa_{{n}}^{\vee}$, and a Frobenius trace at the sink (i.e., no coproducts are necessary).  Since $\s{C}_{{n}}^{\Box}$ is a Frobenius subalgebra of $\s{C}_{{n}}$ for each $\?{n}$, and since the maps $\kappa_{{n}}$ and $\kappa_{{n}}^{\vee}$ all respect the restrictions, it follows that $F_{\Mult}(\Gamma)(\gamma)=F_{\Mult}^{\Box}(\Gamma)(\gamma)$, as desired.
\end{proof}

Note that the coproducts on $\s{C}_{{n}}^{\Box}$ differ from the corresponding coproducts on $\s{C}_{{n}}$, and as a result, this argument fails in higher genus.  Indeed, we saw non-tropical classes with a non-trivial contribution to the multiplicities in the example of \S \ref{gen1ex}. There is, however, an exception for point conditions:
\begin{prop}\label{SplitPoint}
Let $\Gamma$ be a rigid tropical curve in $\f{T}_{g,\Delta}(\A,\Psi)$.  
Suppose for some $i\in I^{\circ}$, the incidence condition $A_i$ is just a point in $N_{\bb{R}}$.  
Let $V_i$ be the vertex in $E_i$, and let $I_i\coloneqq \{E\in \Gamma^{[1]}\setminus \{E_i\}:E\ni V_i\}$.  Let $\Gamma_i$ denote the tropical curve obtained from $\Gamma$ by forgetting the edge $E_i$ and vertex $V_i$, compactifying each $E\in I_i$ with a new vertex $V_E$, extending each $E\in I_i$ to infinity, and then attaching a new contracted edge $E^{\circ}$ to $V_E$ for each $E\in I_i$. We impose only the trivial condition $N_{\bb{R}}$ on the new unbounded edges $E$ for each $E\in I_i$, but on each of the new contracted edges $E^{\circ}$ we impose a point condition.  All other non-compact edges and vertices inherit conditions from the original $\A$ and $\Psi$ in the obvious way.  With these conditions on $\Gamma_i$, we have $\Mult(\Gamma)=\Mult(\Gamma_i)$.
\end{prop}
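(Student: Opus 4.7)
The plan is to compute both $\Mult(\Gamma)$ and $\Mult(\Gamma_i)$ via the lattice-index description of Lemma/Definition~\ref{MultDfn} and exhibit a direct identification of the reduced lattice maps. I first observe that since $A_i$ is a generically chosen point and $\Gamma$ is rigid, any non-compact edge $E \in I_i$ with a nontrivial incidence condition $A_E \subsetneq N_{\bb{R}}$ would force $A_i \in A_E$, contradicting the genericity of $A_i$. Hence every non-compact edge in $I_i$ carries the trivial condition $A_E = N_{\bb{R}}$ and contributes nothing to $\Phi_\Gamma$. I then note that the compact edges of $\Gamma_i$ are in bijection with those of $\Gamma$ with matching weights (each compact $E \in I_i$ becomes a compact edge $V'_E V_E$ in $\Gamma_i$, and the newly added unbounded edges $E'$ and contracted edges $E^\circ$ contribute nothing to the compact-edge set). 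So $\prod_{E\in\Gamma^{[1]}_c}w(E)=\prod_{E\in\Gamma_i^{[1]}_c}w(E)$, and it suffices to show $\f{D}_\Gamma=\f{D}_{\Gamma_i}$.

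For $\Phi_\Gamma$, the $[A_i]$-component is the projection $H\mapsto H_{V_i}$, a split surjection onto $N/\LL_N(A_i)=N$ with kernel $\prod_{V\neq V_i}N$. A standard block-triangular argument gives $\inde(\Phi_\Gamma)=\inde(\wt\Phi_\Gamma)$, where $\wt\Phi_\Gamma$ is obtained by deleting both the $V_i$-variable and the $[A_i]$-constraint; under this reduction, for each compact $E\in I_i$ the diagonal constraint $H_{V_i}-H_{V'_E}\bmod\LL_N(E)$ becomes the residual weight-one constraint $H_{V'_E}\mapsto -H_{V'_E}\bmod\LL_N(E)$ at $V'_E$. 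Exactly the same block-triangular reduction applies to $\Phi_{\Gamma_i}$ at each new vertex $V_E$ using the point condition $[A_{E^\circ}]$: the $V_E$-variable and the $[A_{E^\circ}]$-constraint cancel against each other, and the diagonal constraint for the compact edge $V_E V'_E$ in $\Gamma_i$ reduces to the same residual $H_{V'_E}\mapsto -H_{V'_E}\bmod\LL_N(E)$. The new unbounded edges $E'$ carry trivial conditions and drop out, the non-compact $E\in I_i$ drop out by the first observation, and all remaining components agree on the nose. Hence $\wt\Phi_\Gamma=\wt\Phi_{\Gamma_i}$ up to an immaterial sign, so $\f{D}_\Gamma=\f{D}_{\Gamma_i}$ and $\Mult(\Gamma)=\Mult(\Gamma_i)$.

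The main technical point is carrying out the block-triangular reduction cleanly: one must arrange the rows (constraints) and columns (variables) so that the point-condition constraints $[A_i]$ and $[A_{E^\circ}]$ form identity blocks against the corresponding vertex variables $H_{V_i}$ and $H_{V_E}$, and then verify that the residual affine constraints arising from the diagonals $[\Delta_E]$ coincide in the two settings. The main bookkeeping obstacle is confirming that the newly introduced structure in $\Gamma_i$ (the unbounded edges $E'$ with their trivial conditions, as well as the forced-trivial conditions on any non-compact edges of $I_i$) genuinely contributes nothing under this reduction, so that no spurious factors distinguish $\wt\Phi_\Gamma$ from $\wt\Phi_{\Gamma_i}$.
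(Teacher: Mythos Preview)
Your proof is correct and takes a genuinely different route from the paper's argument. The paper proves this proposition using the TrQFT machinery of Theorem~\ref{MainThm}: choosing a flow on $\Gamma$ with $E_i$ flowing into $V_i$ and all other edges of $I_i$ flowing out, the point condition contributes $\Theta_0^{\Box}$, and the special coproduct identity $\vee(\Theta_0^{\Box})=\Theta_0^{\Box}\otimes\Theta_0^{\Box}$ from \eqref{CoprodTheta} immediately shows that the flow through $V_i$ distributes a copy of $\Theta_0^{\Box}$ (i.e., a point condition) to each outgoing edge, which is exactly the configuration defining $\Gamma_i$. Your approach instead goes back to the lattice-index definition of $\f{D}_\Gamma$ and performs an explicit block-triangular reduction of the map $\Phi$, eliminating the $V_i$-variable against the $[A_i]$-constraint and, symmetrically, each $V_E$-variable against its $[A_{E^\circ}]$-constraint, to exhibit literally the same reduced lattice map on both sides.

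What each approach buys: the paper's argument is a one-line application of the TrQFT formalism and makes transparent \emph{why} point conditions are special (they are the unique conditions whose associated element has a ``group-like'' coproduct), but it presupposes Theorem~\ref{MainThm}. Your argument is more elementary and self-contained, relying only on Lemma/Definition~\ref{MultDfn}; it would work even in a treatment that never develops the TrQFT. Your opening observation that rigidity and genericity force any non-compact $E\in I_i$ to carry the trivial condition is a nice point not made explicit in the paper; note that it can also be read off directly from your reduction, since otherwise $\wt\Phi_\Gamma$ would acquire a zero block in its codomain, contradicting $\f{D}_\Gamma\neq 0$.
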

\begin{proof}
We use $F_{\Mult}$ and Theorem \ref{MainThm}.  Choose a tropical flow on $\Gamma_i$ for which $E_i$ flows into $V_i$, but all other edges of $I_i$ flow out of $V_i$.  Associated to the point condition on $E_i$ we have the element $\Theta_0^{\Box}$.  The flow through $V_i$ is then understood by repeatedly taking coproducts, and by \eqref{CoprodEqn}, this results in $\bigotimes_{E\in I_i} \Theta_0^{\Box}$.  These factors indeed correspond to imposing new point conditions as described in the statement of the proposition.  The result follows.
\end{proof}

\begin{rmk}\label{AlgKun}
Note that $\Lambda^*(N\oplus N)$ can be identified with the cohomology of the torus $T\coloneqq (N\otimes \bb{C})/(N\oplus iN)$ which is naturally a Frobenius algebra.  The tropical classes $\s{C}_{{n}}^{\Box}$ then correspond to algebraic classes in $H^*(T)$.  The fact that there is no splitting formula in general then corresponds to non-existence of an algebraic K\"unneth decomposition of diagonal classes for these Abelian varieties.  See also \cite[\S 4.3]{Rau} for further discussion on issues with splitting the diagonal class, along with another approach for circumventing this issue in genus $0$.
\end{rmk}

\subsection{Splitting formula for genus $0$}

We next give an explicit description for the coproduct $\vee$ on $\s{C}_{{n}}^{\Box}$.  Let $\{e_j\}_{j\in J}$ be a basis for $M_{n}$, indexed by a set $J$.  For $I=\{j_1,\ldots,j_k\}\subset J$, let $e_I\coloneqq e_{j_1}\wedge \cdots \wedge e_{j_k} \in \Lambda^* M_n$ (the sign will not matter).  In particular, $e_{\emptyset}\coloneqq 1$.  Then for $e_I^{\Box} \in \s{C}_{{n}}$, the reasoning used to compute \eqref{vee1} yields
\begin{align}\label{sqcop}
\vee(e_I^{\Box}) = \sum_{I_1\sqcup I_2=J\setminus I} e^{\Box}_{I_1} \otimes e^{\Box}_{I_2},
\end{align}
where the sum is over all decompositions of $J\setminus I$ into a disjoint pair of subsets $I_1$ and $I_2$.

Now, for an edge $E\in \Gamma^{[1]}$ with weighted direction $u_E$, let $u_E'$ denote the primitive vector with direction $u_E$.  Applying \eqref{sqcop} to $1\in \s{C}^{\Box}_{{u_E}}$ yields the following splitting formula:
\begin{thm}[Genus $0$ tropical splitting formula]\label{SplitThm}
   Let $\Gamma$ be a genus $0$ tropical curve satisfying a rigid collection of conditions $\A$, $\Psi$, and let $E$ be a compact edge of $\Gamma$.  Let $\Gamma_1$, $\Gamma_2$ be the two genus $0$ tropical curves obtained by splitting $\Gamma$ at $E$ and then extending the resulting half-edges to infinity.  Let $\Psi_i$ denote the $\psi$-class conditions induced on $\Gamma_i$ by $\Psi$ for $i=1,2$, respectively.
   
   Let $\{e_1,\ldots e_{r-1},u'_E\}$ be a basis for $N$.  Given $I\subset \{1,\ldots,r-1\}$, let $A_I\subset N_{\bb{R}}$ denote the affine space containing $E$ and spanned by $\{e_i\}_{i \in I} \cup \{u'_E\}$.  For $i=1,2$ let $\A_{i,I}$ denote the incidence conditions induced on $\Gamma_i$ by $\A$, with $A_I$ being the condition on the new unbounded edge extending $E$.  Then
   \begin{align}\label{SplitEqn}
       \Mult_{\A}(\Gamma)=w(E)\sum_{I_1\sqcup I_2=\{1,\ldots,r-1\}} \Mult_{\A_{1,I_1}}(\Gamma_1) \Mult_{\A_{2,I_2}}(\Gamma_2),
   \end{align}
   where the sum is over all decompositions of $\{1,\ldots r-1\}$ into a disjoint pair of subsets $I_1$ and $I_2$.
\end{thm}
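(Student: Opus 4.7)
The plan is to apply formula \eqref{sqcop} to $1\in \s{C}_{\bar{u_E}}^{\Box}$ and interpret the resulting coproduct decomposition as a splitting of the contribution of the compact edge $E$ in the tropical intersection formula from Proposition~\ref{MultTropIntersection}. This generalizes the strategy used to prove Proposition~\ref{SplitPoint}: there the coproduct $\vee(\Theta_0^{\Box})$ has a single term by \eqref{CoprodTheta}, whereas here the richer formula \eqref{sqcop} produces the sum indexed by decompositions $L_1\sqcup L_2 = \{1,\ldots,r-1\}$.

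Concretely, I first choose a tropical flow on $\Gamma$ with a unique sink, which is possible because $\Gamma$ is a genus~$0$ tree. As in the proof of Theorem~\ref{FBoxThm}, such a flow allows $F^{\Box}_{\Mult}(\Gamma)(\gamma)$ to be computed without using any coproducts. I then artificially insert $\vee(1) \in \s{C}^{\Box}_{\bar{u_E}}\otimes \s{C}^{\Box}_{\bar{u_E}}$ at the midpoint of $E$; by the Frobenius identity \eqref{CoprodEqn} this leaves the value unchanged. Expanding via \eqref{sqcop},
\[
\vee(1)=\sum_{I_1\sqcup I_2 = J} e_{I_1}^{\Box}\otimes e_{I_2}^{\Box},
\]
and each basis element $e_{I_k}^{\Box}$ matches the squared tropical class $\alpha_{A_{L_k}}^{\Box}$ for the affine subspace $A_{L_k}$ of the theorem statement under the identification $L_k = J\setminus I_k$.

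Because $\Gamma$ is a tree, cutting at $E$ disconnects it into $\Gamma_1\sqcup \Gamma_2$ with disjoint vertex sets. By the symmetric monoidal structure of the TrQFT, each summand of the inserted coproduct then factors as a product of evaluations on the two components; the tropical intersection interpretation of Proposition~\ref{MultTropIntersection} identifies each resulting factor with the multiplicity $\Mult(\Gamma_{i,L_i})$, with the single factor $w(E)$ arising from the weight of the now-split compact edge (which contributes to $\Mult(\Gamma)$ but no longer to either $\Mult(\Gamma_{i,L_i})$).

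The main obstacle is correctly reconciling signs and $|u_E|$-scaling between the squared TrQFT identity $\Mult(\Gamma)^2 = F^{\Box}_{\Mult}(\Gamma)(\gamma)$ of Theorem~\ref{FBoxThm} and the unsquared multiplicity identity claimed here: a naive multilinear expansion of the lattice index $\f{D}_{\Gamma}$ along the $r-1$ rows of $\Phi_E$ produces a signed sum, and all signs must combine consistently for the sum on the right to equal $\Mult(\Gamma)/w(E)$ as a sum of positive integers. The key point is that \eqref{sqcop}, built from the positive $\Box$-structure on $\s{C}_{\bar n}^{\Box}$, has all coefficients $+1$, and the tree structure of $\Gamma$ is what ensures that this positivity propagates to the final decomposition without cancellation. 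That this sign-alignment fails beyond genus~$0$ is exactly the phenomenon illustrated in \S\ref{gen1ex} and discussed in Remark~\ref{AlgKun}, so the genus hypothesis is used essentially at this step.
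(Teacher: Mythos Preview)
Your approach is precisely the paper's: the entire proof in the paper is the single sentence ``Applying \eqref{sqcop} to $1\in \s{C}^{\Box}_{u_E}$ yields the following splitting formula,'' and you have correctly reconstructed and expanded upon that derivation --- choosing a tree flow, inserting the coproduct of~$1$ at the midpoint of~$E$, identifying each term $e_{I_k}^{\Box}$ with a tropical class $\alpha_{A_{L_k}}^{\Box}$ via the dual basis, and factoring over the two components.

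Your flagging of the ``main obstacle'' is on target, but your resolution is not a proof. Concretely: inserting \eqref{sqcop} into Theorem~\ref{FBoxThm} gives an identity of the form
\[
\Mult(\Gamma)^2 \;=\; \sum_{I_1\sqcup I_2} \Mult_{\A_{1,I_1}}(\Gamma_1)^2\,\Mult_{\A_{2,I_2}}(\Gamma_2)^2\quad\text{(up to the $w(E)^2$ factor)},
\]
whereas the claimed formula is the \emph{unsquared} version $\Mult(\Gamma)=w(E)\sum \Mult_1\Mult_2$. These two statements are not equivalent whenever more than one term is nonzero, and generically there are $\binom{r-1}{|I_1|}$ nonzero terms. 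The assertion that ``positivity propagates'' does not bridge this gap: what is actually needed is that in the \emph{unsquared} expansion $\Mult(\Gamma)=\bigl|\sum_K c_K\,G_2(e_K)\bigr|$ (via \S\ref{MirrorPolySection} or a Laplace expansion of $\det\Phi$), all the signed terms $c_K\,G_2(e_K)$ share a common sign. The paper is equally silent on this point; a cleaner route is to work directly with the unsquared flow of \S5.1 and verify that the sign conventions in the recursion $\alpha_{E_{\out}}=\iota_{n_{\out}}(\bigwedge_j\alpha_{E_j})$ make the coefficients $c_K$ and the evaluations $G_2(e_K)$ align consistently.
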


\subsection{Vertex and edge multiplicities}

Now consider one of the tropical curves $\Gamma_i$ as in the above theorem, together with the conditions $\A_i,\Psi_i$ induced by $\A$ and $\Psi$, but with the condition on the new unbounded edge $E_i$ being trivial (so $A_{i,I}$ for $I$ the full set $\{1,\ldots,r-1\}$).  Then $\Gamma_i$ is not necessarily rigid.  In particular, there may be small deformations of $\Gamma_i$ which still satisfy $\A_i$ and $\Psi_i$ but have $E_i$ being translated from its original location.  These translations of $E_i$ sweep out a patch of an affine linear space, and we denote the corresponding linear space, intersected with $N$, by $W_{E_i}$. The main feature of these is that, by rigidity,
$$ W_{E_1}\cap W_{E_2}\subseteq \ZZ u_E.$$
Equivalently, if we treat $E_i$ as the lone outgoing edge of $\Gamma_i$ (with no other sinks), then for  $\gamma_i$ defined as in Theorem \ref{MainThm} for the conditions $\A_i$ on $\Gamma_i$, we can consider $\ker(F_{\Mult}^{\Box})\subset N\oplus N$.  Then for $p:N\oplus N\ra N$ the projection onto either factor, we have 
\begin{align*}
    W_{E_i}=p(\ker(F_{\Mult}^{\Box}(\gamma_i)))\subset N.
\end{align*}

Now when choosing the basis $e_1,\ldots,e_{r-1},u_E$ in Theorem \ref{SplitThm}, after taking a finite-index refinement $N_E$ of the lattice $N$, this basis can be chosen so that \begin{align*}
    A_{1,I_1} = W_{E_2} \mbox{\hspace{.4 in} and \hspace{.4 in} } A_{2,I_2} = W_{E_1}
\end{align*} for some choice of $I_1\sqcup I_2=\{1,\ldots,r-1\}$.  In this case, this will be the only choice of $I_1,I_2$ with a nonzero contribution to \eqref{SplitEqn}.  For the refinement $N_E$ here taken to be as small as possible, the index of $N$ in $N_E$ is
\begin{align*}
\Mult(E)\coloneqq \inde\Big(N/\bb{Z}u_E \rar (N/W_{E_1})\oplus (N/W_{E_2})\Big),
\end{align*}
called the \textbf{edge-multiplicity} of $E$.  Similarly, for each vertex, we define a \textbf{vertex-multiplicity}
\begin{align*}
   \Mult(V)\coloneqq  \inde\left(N \rar \prod_{E\ni V} N/W_{\partial_V E,E} \right)
\end{align*}
where $W_{\partial_V E,E}$ means $W_{E_i}$ associated to the component of $\Gamma\setminus \{E\}$ which does not contain $V$, and for non-compact edges $E_i$,  $W_{\partial_V E_i,E_i}\coloneqq A_i\cap N$.

Now, inductively applying Theorem \ref{SplitThm} to every compact edge and choosing our bases $\{e_i\}$ as above for each compact edge, we obtain the following:
\begin{cor}\label{SplitCor}
\begin{align*}
    \f{D}_{\Gamma} = \frac{\prod_{V\in \Gamma^{[0]}} \Mult(V)}{\prod_{E\in \Gamma^{[1]}_c} \Mult(E)}.
\end{align*}
\end{cor}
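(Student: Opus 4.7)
The plan is to induct on the number of compact edges of $\Gamma$. For the base case, $\Gamma$ has no compact edges, so it consists of a single vertex $V$ together with non-compact edges $E_i$ subject to conditions $A_i$. The map $\Phi$ of \eqref{D} then reduces to $N \rar \prod_{i\in I} N/\LL_N(A_i)$, and since $W_{\partial_V E_i, E_i} = \LL_N(A_i)$ for non-compact edges by definition, this is precisely the map whose index is $\Mult(V)$. Hence $\f{D}_\Gamma = \Mult(V)$, matching the claimed formula with an empty denominator.

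For the inductive step, pick a compact edge $E$ with vertices $V_1,V_2$ and let $\Gamma_1,\Gamma_2$ be the two genus zero curves obtained by splitting $\Gamma$ at $E$. As explained in the discussion preceding the corollary, after passing to a refinement $N_E \supset N$ with $[N_E:N]=\Mult(E)$ one can choose the basis $\{e_1,\ldots,e_{r-1},u'_E\}$ of $N_E$ so that the sum in the splitting formula \eqref{SplitEqn} collapses to a single nontrivial term, namely the one with $A_{1,I_1}=W_{E_2}$ and $A_{2,I_2}=W_{E_1}$. Applying Theorem~\ref{SplitThm} over $N_E$ thus reduces to a single product $w(E)\,\Mult(\Gamma_1)\Mult(\Gamma_2)$, and tracking how the index $\inde(\Phi)$ transforms under the refinement $N \subset N_E$ (the technical heart of the argument) shows that the factor $[N_E:N] = \Mult(E)$ enters $\f{D}$ exactly once, yielding
\begin{align*}
\f{D}^N_\Gamma \;=\; \frac{\f{D}^N_{\Gamma_1}\,\f{D}^N_{\Gamma_2}}{\Mult(E)}.
\end{align*}

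To close the induction, note that the vertex multiplicities in $\Gamma_i$ (computed with the new condition $W_{E_{3-i}}$ imposed on the new unbounded edge extending $E$) coincide with the vertex multiplicities in $\Gamma$: indeed, $W_{\partial_{V_i} E_i, E_i}$ in $\Gamma_i$ equals $W_{E_{3-i}}$, which by definition equals $W_{\partial_{V_i} E, E}$ in $\Gamma$, while all other edges containing $V_i$ are inherited verbatim. Similarly, the compact edges of $\Gamma_1 \sqcup \Gamma_2$ partition the compact edges of $\Gamma$ other than $E$ itself, and their edge multiplicities are unchanged since the local data defining $\Mult(E')$ for $E' \neq E$ depends only on the neighborhoods of $E'$ within $\Gamma_i$. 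Combining these identifications with the inductive hypothesis for $\Gamma_1$ and $\Gamma_2$ and substituting into the displayed identity telescopes to the desired formula. The main obstacle is precisely the lattice-refinement bookkeeping: verifying that the passage from $N$ to $N_E$, which rescales both the domain and codomain of $\Phi$, contributes exactly one power of $\Mult(E)$ to $\f{D}_\Gamma$ rather than canceling out or appearing with an incorrect exponent.
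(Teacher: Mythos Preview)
Your proposal is correct and follows essentially the same route as the paper: induct on compact edges, applying Theorem~\ref{SplitThm} with the special basis described just before the corollary so that only the single term with $A_{1,I_1}=W_{E_2}$ and $A_{2,I_2}=W_{E_1}$ survives, picking up the factor $\Mult(E)$ from the lattice refinement. You in fact supply more detail than the paper's one-line proof---the explicit base case and the verification that vertex and edge multiplicities of $\Gamma_1,\Gamma_2$ agree with those of $\Gamma$---and you correctly flag the lattice-refinement bookkeeping as the point requiring care, which the paper likewise leaves implicit.
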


\begin{rmk}\label{HigherGSplitting}
As mentioned in \S \ref{SplitG1}, one could state a modified version of the splitting formula \eqref{SplitEqn} in higher-genus by equipping the affine subspaces $A_i$ with orientations and then using signed intersections of the tropical classes.  This version of the multiplicity calculation actually follows directly from the definition of $\f{D}_{\Gamma}$ as the absolute value of determinant of a matrix as in \eqref{DGamma}.  However, re-ordering these oriented versions of tropical cycles (corresponding to reordering columns of the matrix) -- e.g., when trying to group together conditions associated to the same vertex -- results in numerous sign changes, and this prevents one from writing a nice analog of Corollary \ref{SplitCor} using this approach.
\end{rmk}

\section{Multiplicities from brackets of polyvector fields}\label{LInfinitySection}

\subsection{Flows with a single sink}

In the proof of Theorem \ref{MainThm}, we used a tropical flow in which the midpoint of each compact edge of $\Gamma$ was a source, and each vertex was a sink.  However, the point of introducing the TrQFT formalism is that any other choice of tropical flow will produce a different method of computing the multiplicities.

In this section we consider the case of a rigid genus $0$ tropical curve $\Gamma$ equipped with a flow consisting of a single sink at a vertex $V_{\infty}$.  In this setting, we recursively associate elements $\alpha_E \in \Lambda^* M$ (determined up to sign) to each edge $E\in \Gamma^{[1]}$ as follows:
\begin{itemize}
    \item For each non-compact edge $E_i$, $i\in I$, we take $\alpha_{E_i}\coloneqq \alpha_{A_i}$ as defined in \eqref{alphaA}.
    \item Suppose $\{E_j\}_j$ are the edges flowing into a vertex $V\neq V_{\infty}$ and $E_{\out}$ is the edge flowing out of $V$.  Assume by induction that each $E_j$ has already been assigned some $\alpha_{E_j}\in \Lambda^* M$.  Let $n_{\out}$ be the weighted direction of $E_{\out}$ (pointing opposite the flow).  Then 
    \begin{align}
        \alpha_{E_{\out}}\coloneqq \iota_{n_{\out}}\left(\bigwedge_j \alpha_{E_j}\right).
    \end{align}
\end{itemize}
For $\alpha \in \Lambda^*M$, $\alpha^{\Box}$ as in \eqref{squaring}, $n\in N$, and $\kappa_{{n}}$ and $\kappa_{{n}}^{\vee}$ as in \S \ref{TrQFT_Const}, one checks that
\begin{align*}
    \kappa_{{n}}\circ \kappa_{{n}}^{\vee}(\alpha^{\Box}) = (-1)^{\deg(\alpha)-1}(\iota_n(\alpha))^{\Box}.
\end{align*}
Thus, for $\Gamma_{V}$ consisting of $V$, the half-edges $E_j$ flowing into $V$, and the half-edge $E_{\out}$ flowing out of $V$ (glued appropriately), we have
\begin{align*}
    F_{\Mult}(\Gamma_V)(\bigotimes_j \alpha_{E_j}^{\Box}) = \pm\alpha_{E_{\out}}^{\Box}.
\end{align*}
It now follows by induction and Theorem \ref{MainThm} that the multiplicity of $\Gamma$ is given by
\begin{align*}
    \Mult(\Gamma)=\left|\left\langle \Omega, \bigwedge_{E\ni V_{\infty}} \alpha_E \right\rangle\right|,
\end{align*}
where $\Omega$ is a primitive top-degree form in $\Lambda^* N$ and $\langle \cdot,\cdot \rangle$ denotes the dual pairing.

In the next subsection, we re-frame this construction in terms of mirror polyvector fields before stating this multiplicity formula as a theorem.

\subsection{Mirror polyvector fields and multiplicities}\label{MirrorPolySection}

Consider the algebra
\begin{align}\label{A}
    A\coloneqq \bb{Z}[N]\otimes \Lambda^* M.
\end{align}
This can be viewed as the algebra of integral \textbf{polyvector fields} on the algebraic torus $\bb{G}_m(M)$ dual/mirror to $\bb{G}_m(N)$ (significance to mirror symmetry will be discussed in \S \ref{MSapps}).  An element $z^n\otimes m\in \bb{Z}[N]\otimes M$ corresponds to the derivation 
\begin{align}\label{derivation}
    z^a \mapsto \langle a,m\rangle z^{a+n}
\end{align} of $\bb{Z}[N]=\Gamma(\bb{G}_m(M),\s{O}_{\bb{G}_m(M)})$.  We will often abbreviate the notation $z^n\otimes \alpha$ as simply $z^n \alpha$, and similarly, we will often write wedge-products $\alpha\wedge \beta$ as simply $\alpha\beta$.

We define a linear form $\ell_1:A\rar A$ by
\begin{align*}
\ell_1(z^n\alpha) \coloneqq   z^n \iota_n(\alpha),
\end{align*}
and furthermore, we define multilinear functions $\ell_k:A^{\otimes k}\rar A$ by
\begin{align}\label{lk}
    \ell_k(z^{n_1}\alpha_1,\ldots,z^{n_k}\alpha_k)\coloneqq \ell_1\left(\prod_{j=1}^k z^{n_j}\alpha_j \right)= z^{n_1+\ldots+n_k} \iota_{n_1+\ldots+n_k}(\alpha_1\wedge \cdots \wedge \alpha_k).
\end{align}
We will study the structure of these brackets $\ell_k$ in \S \ref{Linf}.  First, we restate the multiplicity computation from above in terms of these brackets:

\begin{thm}\label{BracketMult}
Given a rigid genus $0$ tropical curve $\Gamma\in\f{T}_{g,\Delta}(\A,\Psi)$ with a flow towards a specified sink $V_{\infty}$, we inductively associate an element, well-defined up to sign, 
\begin{align*}
\zeta_E\coloneqq z^{n_E} \otimes \alpha_E \in \bb{Z}[N]\otimes \Lambda^* M
\end{align*}
as follows:
\begin{itemize}
    \item For each $i\in I$, 
     take $\zeta_{E_i}\coloneqq z^{\Delta(i)}\otimes \alpha_{A_i}$ for $\alpha_{A_i}$ as defined in \eqref{alphaA}.
    \item Let $E_1,\ldots,E_k$ be the edges flowing into a vertex $V\neq V_{\infty}$, and let $E_{\out}$ be the edge flowing out of $V$.  We take
    \begin{align}\label{sbracket}
        \zeta_{E_{\out}}\coloneqq \ell_k(\zeta_{E_1},\ldots,\zeta_{E_s}).
    \end{align}
\end{itemize}
Let $\Omega$ be a primitive element of $\Lambda^r N$.  Then $\Mult(\Gamma)$ equals the absolute value of the dual pairing:
\begin{align}\label{ProdZetaE}
    \Mult(\Gamma)=\left|\left\langle \Omega,\prod_{E\ni V_{\infty}} \zeta_E \right\rangle \right|.
\end{align}
 \end{thm}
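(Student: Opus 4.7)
\begin{myproof}[Proof proposal]
The plan is to reduce to the TrQFT computation of the previous section via Theorem \ref{FBoxThm} and then translate the output into the polyvector-field language. Since $\Gamma$ is a genus $0$ tree, a tropical flow with unique sink $V_\infty$ exists. Under such a flow every non-sink vertex $V$ has a distinguished outgoing edge $E_{\out}$ and some number $k$ of incoming edges, so no coproduct of any $\s{C}_{\?n}$ is ever invoked when computing $F_{\Mult}^\Box(\Gamma)(\gamma)$: the only operations at each non-sink vertex are multiplication of incoming tropical classes in $\s{C}_0$ followed by the adjoint $\kappa_{n_{\out}}^\vee$, and a single Frobenius trace at $V_\infty$.

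The key algebraic identity, stated immediately before the theorem, is
\begin{align*}
\kappa_n\circ \kappa_n^\vee(\alpha^\Box) = (-1)^{\deg(\alpha)-1}(\iota_n\alpha)^\Box
\end{align*}
for $\alpha \in \Lambda^* M$, which I would verify directly from the definitions in \S\ref{TrQFT_Const} and then use to induct down the flow. Starting from $\alpha_{A_i}^\Box$ on the unbounded edges (matching $\zeta_{E_i}=z^{\Delta(i)}\alpha_{A_i}$), induction shows that the TrQFT assigns $\pm\alpha_E^\Box$ to each internal edge, where $\alpha_E = \iota_{n_E}(\bigwedge_j \alpha_{E_j})$ is exactly the polyvector-field recursion. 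All sign ambiguities are absorbed by the squaring, since $(-\alpha)^\Box = \alpha^\Box$.

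Next I would verify that the $z^{n}$ bookkeeping in $\ell_k$ correctly reproduces the weighted tangent direction opposite the flow. At a non-sink vertex, the balancing condition gives $n_{E_{\out}} = \sum_j n_{E_{j}}$, precisely the exponent that $\ell_k$ assigns to its output, while the contraction $\iota_{n_{E_{\out}}}$ built into $\ell_k$ implements the $\kappa_{n_{\out}}^\vee$ step. Balancing at $V_\infty$ then forces $\sum_{E\ni V_\infty} n_E=0$, so $\zeta_\Gamma=\prod_{E\ni V_\infty}\zeta_E$ has trivial monomial part and lies in $\Lambda^*M$. A dimension count from Lemma/Definition \ref{MultDfn} for a genus $0$ tree (using $\ov(V)=\val(V)-3$ and $|\Gamma^{[1]}_c|=|\Gamma^{[0]}|-1$) shows that the rigidity identity $\sum_i\codim(A_i)=d^{\trop}_{0,\f u}$ forces $\deg\bigl(\bigwedge_{E\ni V_\infty}\alpha_E\bigr)=r$, so $\zeta_\Gamma\in z^0\otimes \Lambda^{\topp}M$.

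Finally, since $\Tr_0$ takes $\Theta_M^\Box$ to $1$, writing $\bigwedge_{E\ni V_\infty}\alpha_E = \langle\Omega,\bigwedge_{E\ni V_\infty}\alpha_E\rangle\,\Theta_M$ gives
\begin{align*}
F_{\Mult}^\Box(\Gamma)(\gamma) = \Tr_0\!\left(\Bigl(\bigwedge_{E\ni V_\infty}\alpha_E\Bigr)^{\!\Box}\right) = \left\langle \Omega, \prod_{E\ni V_\infty}\zeta_E\right\rangle^{\!2},
\end{align*}
and Theorem \ref{FBoxThm} identifies the left-hand side with $\Mult(\Gamma)^2$. Taking absolute values gives \eqref{ProdZetaE}.

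The main obstacle I anticipate is sign coherence across the induction, in particular the $(-1)^{\deg(\alpha)-1}$ factors produced by each $\kappa_n\circ\kappa_n^\vee$. The squaring trick of \S\ref{TrQFT_Const} is designed precisely to kill these signs, so the argument survives, but one must check that passing from the TrQFT computation (which naturally outputs a squared element whose trace is a perfect square) to the single dual pairing \eqref{ProdZetaE} loses no information and introduces no extra signs; this is where the absolute value in the statement originates.
\end{myproof}
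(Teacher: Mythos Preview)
Your proposal is correct and follows essentially the same route as the paper's argument (given in the subsection immediately preceding the theorem): pick a single-sink flow, use the identity $\kappa_n\circ\kappa_n^\vee(\alpha^\Box)=(-1)^{\deg(\alpha)-1}(\iota_n\alpha)^\Box$ to propagate $\alpha_E^\Box$ through each non-sink vertex, and read off the multiplicity as the pairing with $\Omega$. The only cosmetic difference is that the paper invokes Theorem~\ref{MainThm} directly rather than Theorem~\ref{FBoxThm}; since a single-sink flow in genus~$0$ uses no coproducts, the two computations coincide (this is exactly the content of the proof of Theorem~\ref{FBoxThm}), so either citation works.
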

In the construction above, it follows from induction and the balancing condition that $n_E$ for each edge $E$ is the weighted direction of $E$ in the direction opposite that of the flow towards $V_{\infty}$.  Thus, $\sum_{E\ni V} n_E =0$ by the balancing condition, and then rigidity implies that  $\prod_{E\ni V_{\infty}} \zeta_E$ is in $\Lambda^r M$.  We note that \eqref{ProdZetaE} can alternatively be computed as the index of $\prod_{E\ni V_{\infty}} \zeta_E$ in $\Lambda^{\topp} M$. We also note that this index is the same as the absolute value of the integral from \cite[\S 4]{BK}.

\subsection{L-infinity, Gerstenhaber, and BV-structures}\label{Linf}

We next explore the structure of the algebra $A=\bb{Z}[N]\otimes \Lambda^* M$ from \eqref{A} and the $k$-brackets $\ell_k$ of \eqref{lk} (with a sign-modification), as well as some consequences of this structure.  We denote
\begin{align*}
    A_0\coloneqq \ker(\ell_1)\subset A,
\end{align*}
i.e., $A_0$ is the submodule generated over $\bb{Z}$ by elements of the form $z^n \alpha$ with $\iota_n(\alpha)=0$.  Note that $A_0$ is closed under the brackets $\ell_k$ for each $k$.  This subspace $A_0$ is especially important because it contains the elements of $A$ which can actually show up as some $\zeta_E$ in the multiplicity computations of Theorem \ref{BracketMult}.

\subsubsection{Grading}

Consider the grading $\deg$ on $A$ given by $\deg(z^n\otimes \alpha)\coloneqq d$ when $\alpha \in \Lambda^d M$.  This makes $A$ into a graded commutative algebra under the product $(z^{n_1}\alpha_1)\cdot (z^{n_2}\alpha_2) = z^{n_1+n_2}\alpha_1\wedge \alpha_2$.  That is,
\begin{align}\label{GradeComm}
    (z^{n_1}\otimes \alpha_1)\cdot (z^{n_2}\otimes \alpha_2) = (-1)^{\deg(\alpha_1)\deg(\alpha_2)} (z^{n_2}\otimes \alpha_2)\cdot (z^{n_1}\otimes \alpha_1).
\end{align}
We let $|\cdot|$ denote the grading associated to $A[-1]$, i.e., 
\begin{align*}
    |z^n\otimes \alpha|\coloneqq \deg(z^n\otimes \alpha)-1.
\end{align*}
Given homogeneous elements $\zeta_1,\ldots,\zeta_k\in A$, we denote
\begin{align}\label{epsilon}
    \epsilon(\zeta_1,\ldots,\zeta_k)\coloneqq (-1)^{\sum_{i=1}^k (k-i)|\zeta_i|}.
\end{align}

\subsubsection{The sign-modified bracket $l_k$}\label{sec:lk}

Recall that for the sake of computing multiplicities, the $k$-brackets $\ell_k$ of \eqref{lk} only matter up to sign.  Thus, Theorem \ref{BracketMult} remains unchanged if we replace the brackets $\ell_k$ with the modified brackets $l_k:A^{\otimes k} \rar A$ defined on homogeneous elements by
\begin{align*}
    l_k(\zeta_1,\ldots,\zeta_k)\coloneqq  \epsilon(\zeta_1,\ldots,\zeta_k) \ell_k(\zeta_1,\ldots,\zeta_k).
\end{align*}
Note that $A_0=\ker(l_1)$, and that $A_0$ is closed under $l_k$ for each $k$.

One easily sees that $l_k$ has degree $-1$ under $\deg$ and degree $k-2$ under $|\cdot|$, i.e.,
\begin{align}\label{degree k-2}
    \deg[l_k(\zeta_1,\ldots,\zeta_k)]&=\left(\sum_{i=1}^k \deg(\zeta_k)\right)-1, \nonumber\\
    \left|l_k(\zeta_1,\ldots,\zeta_k) \right|&=\left(\sum_{i=1}^k |\zeta_k|\right)+(k-2).
\end{align}

\subsubsection{Graded skew symmetry of $l_k$}\label{GradedSymSection}

Now let $\sigma\in S_k$ be a permutation of homogeneous elements $(\zeta_1,\ldots,\zeta_k)$, and let $\chi(\sigma,\zeta_1,\ldots,\zeta_k)\in \{\pm 1\}$ denote the \textbf{graded signature}, meaning the product of the ordinary signature of $\sigma$ with a factor of $(-1)^{|\zeta_i||\zeta_j|}$ for each transposition of adjacent entries $\zeta_i,\zeta_j$ in a decomposition of the permutation as a product of such transpositions.  Equivalently, since $|\zeta_i||\zeta_j|=\deg(\zeta_i)\deg(\zeta_j)-\deg(\zeta_i)-\deg(\zeta_j)+1$, we see using \eqref{GradeComm} that $\chi(\sigma,\zeta_1,\ldots,\zeta_k)$ is determined by
\begin{align*}
    \epsilon(\zeta_{\sigma(1)},\ldots,\zeta_{\sigma(k)})\zeta_{\sigma(1)}\cdots \zeta_{\sigma(k)} = \chi(\sigma,\zeta_1,\ldots,\zeta_k)\epsilon(\zeta_1,\ldots,\zeta_k)\zeta_1 \cdots \zeta_k,
\end{align*}
or equivalently, writing $\zeta_i=z^{n_i}\alpha_i$ for each $i$,
\begin{align}\label{chiomega}
    \epsilon(\zeta_{\sigma(1)},\ldots,\zeta_{\sigma(k)})\alpha_{\sigma(1)}\wedge\cdots\wedge \alpha_{\sigma(k)} = \chi(\sigma,\zeta_1,\ldots,\zeta_k)\epsilon(\zeta_1,\ldots,\zeta_k)\alpha_1 \wedge\cdots\wedge \alpha_k.
\end{align}
Hence,
\begin{align}\label{GradedSkew}
   l_k(\zeta_{\sigma(1)},\ldots,\zeta_{\sigma(k)}) = \chi(\sigma,\zeta_1,\ldots,\zeta_k)l_k(\zeta_1,\ldots,\zeta_k).
\end{align}

\subsubsection{The bracket $l_2$ as the Schouten-Nijenhuis bracket}\label{l2SN}

 We recall the standard Schouten-Nijenhuis bracket $[\cdot,\cdot]$ on $A$, i.e., the unique  extension of the Lie bracket/Lie derivative to a graded bracket making $A$ into a Gerstenhaber algebra.  It can be defined as follows.  For $a_0,\ldots,a_k,b_0,\ldots,b_{\ell}\in \bb{Z}[N]\otimes M$, one defines
 \begin{align}\label{SN}
     [a_0\cdots a_k,b_0\cdots b_{\ell}] = \sum_{i,j}(-1)^{i+j}[a_i,b_j] a_0\cdots \wh{a_i} \cdots a_k b_0 \cdots \wh{b_j} \cdots b_{\ell},
 \end{align}
 where the hat indicates omission of the element, and where $[a_i,b_j]$ is the usual Lie bracket of the corresponding vector fields.  E.g., for $n_1,n_2\in N$ and $m_1,m_2\in M$, 
\begin{equation} \label{eq-usual-Lie}
 [z^{n_1}m_1,z^{n_2}m_2]\coloneqq z^{n_1+n_2}(\iota_{n_2}(m_1) m_2 - \iota_{n_1}(m_2) m_1).
\end{equation} 
 This is extended to include degree $0$ elements by defining $[z^{n_1},z^{n_2}]=0$, and for $\alpha \in \Lambda^* M$, $$[z^{n_1}\alpha,z^{n_2}]\coloneqq z^{n_1+n_2} \iota_{n_2}(\alpha).$$
For $\alpha,\beta\in \Lambda^* M$, $n_1,n_2\in N$ one has by \eqref{SN},
$$[z^{n_1}\alpha,z^{n_2}\beta]=-(-1)^{|\alpha||\beta|}[z^{n_2}\beta,z^{n_1}\alpha].$$
For $n\in N$, $\alpha,\beta\in  \Lambda^* M$, we find the special cases
\begin{equation} \label{eq-basic-Lie-props}
    [\alpha,\beta]=0,
    \qquad [z^n\alpha,\beta]=-z^n\alpha\iota_n(\beta),
    \qquad [\alpha,z^n\beta]=(-1)^{|\alpha|}z^n\iota_n(\alpha)\beta.
\end{equation}

Now, given $n_1,n_2\in N$ and $\alpha_0,\ldots,\alpha_k,\beta_0,\ldots,\beta_{\ell}\in M$, we apply \eqref{SN} to the case where $a_0=z^{n_1}\alpha_0$, $b_0=z^{n_2}\beta_0$, $a_i=\alpha_i$ for $i=1,\ldots,k$, and $b_j=\beta_j$ for $j=1,\ldots,\ell$.   Denoting $\alpha = \alpha_0\cdots \alpha_k$ and $\beta=\beta_0\cdots \beta_{\ell}$, we obtain using \eqref{SN} and \eqref{eq-usual-Lie}:
 \begin{align}
     [z^{n_1}\alpha,z^{n_2}\beta] =& [(z^{n_1}\alpha_0)\alpha_1\cdots\alpha_k,(z^{n_2}\beta_0)\beta_1\cdots\beta_{\ell}] \nonumber \\ =& \nonumber z^{n_1+n_2} \left[\left(\sum_{i=0}^k (-1)^{i}\big( \iota_{n_2}(\alpha_i)\beta_0\big) \alpha_0\cdots \wh{\alpha_i}\cdots \alpha_k\beta_1\cdots \beta_{\ell} \right) \right.\\
     &\left. - \left(\sum_{j=0}^{\ell} (-1)^{j} \big(\iota_{n_1}(\beta_j) \alpha_0\big)\alpha_1 \cdots \alpha_k \beta_0\cdots \wh{\beta_j}\cdots \beta_{\ell} \right)\right] \nonumber \\
     =& (-1)^k z^{n_1+n_2}\left(\iota_{n_2}(\alpha)\beta + (-1)^{k+1} \alpha\iota_{n_1}(\beta)\right). \label{schouten0}
 \end{align}
Here, we implicitly assumed that $\deg(\alpha)> 0$ and $\deg(\beta)> 0$, but one easily checks that \eqref{schouten0} extends to the $\deg=0$ cases as well.

On the other hand, after computing  $\epsilon(\alpha,\beta)=(-1)^k$, we see that 
 \begin{align}\label{l2}
     l_2((z^{n_1}\alpha_0)\alpha_1\cdots\alpha_k,(z^{n_2}\beta_0)\beta_1\cdots\beta_{\ell})&=(-1)^{k}z^{n_1+n_2}\iota_{n_1+n_2}(\alpha\beta) \nonumber \\
     &= (-1)^{k}z^{n_1+n_2}\left(\iota_{n_1+n_2}(\alpha)\beta +(-1)^{k+1}\alpha\iota_{n_1+n_2}(\beta)\right) 
 \end{align}
Note that \eqref{schouten0} and \eqref{l2} agree when $z^{n_1}\alpha$ and $z^{n_2}\beta$ are both contained in $A_0$.  
We have thus proven the following:

\begin{prop}\label{Schouten}
Then bracket $l_2$ agrees with the Schouten-Nijenhuis bracket $[\cdot,\cdot]$ on $A_0$.
\end{prop}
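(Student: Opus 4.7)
The proof proposal is that Proposition \ref{Schouten} follows essentially immediately by comparing the two formulas \eqref{schouten0} and \eqref{l2} that have already been established in the preceding paragraphs. My plan is to argue that on $A_0$, the only difference between these two expressions, namely the appearance of $\iota_{n_1+n_2}$ in \eqref{l2} versus the ``split'' contractions $\iota_{n_2}(\alpha)$ and $\iota_{n_1}(\beta)$ in \eqref{schouten0}, disappears.

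Concretely, suppose $\zeta_1 = z^{n_1}\alpha$ and $\zeta_2=z^{n_2}\beta$ both lie in $A_0 = \ker(l_1)$, so that $\iota_{n_1}(\alpha)=0$ and $\iota_{n_2}(\beta)=0$. Since contraction is linear in the vector slot, $\iota_{n_1+n_2} = \iota_{n_1}+\iota_{n_2}$, and therefore
\begin{align*}
\iota_{n_1+n_2}(\alpha) &= \iota_{n_1}(\alpha)+\iota_{n_2}(\alpha) = \iota_{n_2}(\alpha),\\
\iota_{n_1+n_2}(\beta) &= \iota_{n_1}(\beta)+\iota_{n_2}(\beta) = \iota_{n_1}(\beta).
\end{align*}
Substituting these into \eqref{l2} yields exactly the right-hand side of \eqref{schouten0}, so $l_2(\zeta_1,\zeta_2)=[\zeta_1,\zeta_2]$ for $\zeta_1,\zeta_2\in A_0$.

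The plan is then just to extend by bilinearity: since both $l_2$ and the Schouten-Nijenhuis bracket are $\mathbb{Z}$-bilinear, and since elements of the form $z^{n}\alpha_0\alpha_1\cdots\alpha_k$ with $\alpha_j\in M$ span $A$ (and those lying in $A_0$ span $A_0$), the pointwise equality above suffices. There is essentially no obstacle; the substantive content is already packaged in the computations leading to \eqref{schouten0} and \eqref{l2}, and the sign constant $(-1)^k$ coming from $\epsilon(\alpha,\beta)$ was the only potentially delicate bookkeeping, which has already been checked. Thus the proposition is really a bookkeeping corollary of the identification $\iota_{n_1+n_2}\alpha = \iota_{n_2}\alpha$ and $\iota_{n_1+n_2}\beta = \iota_{n_1}\beta$ on $A_0$.
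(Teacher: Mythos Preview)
Your proposal is correct and follows essentially the same approach as the paper: both simply observe that the already-derived formulas \eqref{schouten0} and \eqref{l2} coincide once $\iota_{n_1}\alpha=0$ and $\iota_{n_2}\beta=0$, which is exactly the condition of lying in $A_0$. Your added remark about spanning by simple wedges is fine but not strictly needed, since the final expressions in \eqref{schouten0} and \eqref{l2} depend only on $\alpha,\beta$ as forms and both brackets are bilinear.
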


\subsubsection{$l_1$ as a BV-operator}\label{l1BV}
Recall that a \textbf{BV-algebra} (Batalin-Vilkovisky algebra) is the data of an associative graded commutative algebra $\s{A}$ together with a degree $(-1)$ unary linear operator $\delta:\s{A}\rar \s{A}$ such that $\delta\circ \delta=0$ and such that, for all homogeneous $a,b,c\in A$, one has
\begin{align}\label{7term}
    \delta(abc)=&\delta(ab)c+(-1)^{\deg(a)} a\delta(bc)+(-1)^{\deg(b)(\deg(a)+1)}b\delta(ac) \\&- \delta(a)bc-(-1)^{\deg(a)}a\delta(b)c - (-1)^{\deg(a)+\deg(b)}ab\delta(c).\nonumber
\end{align}
Consider our algebra $A\coloneqq \bb{Z}[N]\otimes \Lambda^* M$.  This is of course an associative graded commutative algebra with $\deg$ as the grading.  Furthermore, the operator $l_1$ is easily seen to have degree $(-1)$ and satisfy $l_1\circ l_1=0$, and a straightforward calculation reveals that it also satisfies \eqref{7term}.  Thus, $A$ together with the operator $l_1$ is a BV-algebra.

A standard property of BV-algebras is that they canonically admit a bracket $[\cdot,\cdot]$ making them into Gerstenhaber algebras.  This bracket is defined as the failure of $\delta$ to be a derivation, i.e.,
\begin{align}\label{DerivationFail}
[a,b]=(-1)^{\deg(a)}\delta(ab) - (-1)^{\deg(a)} \delta(a)b - a \delta(b).
\end{align}
Furthermore, it follows that $\delta$ gives a derivation for the bracket, i.e.,
\begin{align*}
    \delta([a,b])=[\delta(a),b]+(-1)^{\deg(a)-1}[a,\delta(b)].
\end{align*}

We now check that the bracket determined by \eqref{DerivationFail} for $A$ and $\delta$ is negative the Schouten-Nijenhuis bracket.  Let $a=z^{n_1}\alpha$ and $b=z^{n_2}\beta$ with $\deg(\alpha)=k+1$, $\deg(\beta)=\ell+1$.  Using \eqref{schouten0}, we compute
\begin{align*}
l_1(ab)=l_1((z^{n_1}\alpha)(z^{n_2}\beta)) &= z^{n_1+n_2}\left(\iota_{n_1+n_2}(\alpha)\beta + (-1)^{k+1} \alpha \iota_{n_1+n_2}\beta \right) \\
&= (-1)^k[a,b] + l_1(a)b+ (-1)^{k+1}al_1(b),
\end{align*}
where the bracket $[\cdot,\cdot]$ is the Schouten-Nijenhuis bracket.  The claim \eqref{DerivationFail} now follows for negative this bracket by rearranging the terms.  We have thus proven:

\begin{prop}\label{BVprop}
$A$ is a BV-algebra with $l_1$ as the BV-operator, and the associated bracket is negative the Schouten-Nijenhuis bracket. 
\end{prop}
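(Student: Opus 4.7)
The plan is to verify the three required properties: $l_1^2 = 0$, the seven-term relation \eqref{7term} for $\delta = l_1$, and the identification of the induced bracket with negative the Schouten--Nijenhuis bracket. That $l_1$ is of degree $-1$ and that $(A,\cdot)$ is associative graded commutative under $\deg$ are immediate from the construction.

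For $l_1^2 = 0$, $\bb{Z}$-linearity reduces the check to $l_1(l_1(z^n\alpha)) = z^n\iota_n(\iota_n(\alpha))$, which vanishes since $\iota_n$ is an odd graded derivation of $\Lambda^* M$ and thus squares to zero.

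For \eqref{7term}, trilinearity in $(a,b,c)$ reduces to pure tensors $a = z^{n_1}\alpha$, $b = z^{n_2}\beta$, $c = z^{n_3}\gamma$; every term is proportional to $z^{n_1+n_2+n_3}$. Using linearity $\iota_{n_1+n_2+n_3} = \iota_{n_1}+\iota_{n_2}+\iota_{n_3}$ and the graded Leibniz rule on the triple product, $l_1(abc)$ expands into nine signed monomials indexed by pairs $(i,j)$ recording that $\iota_{n_i}$ acts on the $j$-th factor. The six RHS terms of \eqref{7term} expand similarly: the three ``diagonal'' monomials ($i = j$) on the LHS cancel exactly the three subtracted terms $-l_1(a)bc$, $-(-1)^{\deg a}a\,l_1(b)c$, $-(-1)^{\deg a + \deg b} a b\,l_1(c)$, while the six ``off-diagonal'' monomials partition evenly among $l_1(ab)c$, $(-1)^{\deg a} a\, l_1(bc)$, and $(-1)^{\deg b(\deg a + 1)} b\, l_1(ac)$. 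The exotic sign $(-1)^{\deg b(\deg a + 1)}$ is forced by graded commutativity when moving $\beta$ past $\iota_{n_1}(\alpha)$ or $\iota_{n_3}(\alpha)$ to restore the canonical ordering $\alpha\beta\gamma$ before comparing coefficients.

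The bracket identification is essentially immediate from the computation already carried out in the paragraph preceding the proposition: by \eqref{schouten0}, $l_1(ab) - l_1(a)b - (-1)^{k+1} a\, l_1(b) = (-1)^k[a,b]$ for $a = z^{n_1}\alpha$ of degree $k+1$, where $[\cdot,\cdot]$ is the Schouten--Nijenhuis bracket. Substituting this into \eqref{DerivationFail} and collecting the factors $(-1)^{\deg a} = (-1)^{k+1}$ against $(-1)^k$ yields $[a,b]_{\mathrm{BV}} = -[a,b]$. The main obstacle is the seven-term check: the computation is purely formal, but the sign bookkeeping for the graded transposition of $\beta$ past the contracted $\alpha$-factor is the step where a careless expansion would easily produce the wrong exponent; once this transposition is made systematically, the remaining term-by-term cancellation is straightforward.
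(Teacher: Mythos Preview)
Your proposal is correct and follows essentially the same approach as the paper: the paper likewise dismisses the seven-term relation as a ``straightforward calculation'' and derives the bracket identification directly from \eqref{schouten0} by rearranging $l_1(ab) = (-1)^k[a,b] + l_1(a)b + (-1)^{k+1}a\,l_1(b)$. One small imprecision in your seven-term accounting: the three diagonal monomials on the LHS do not simply cancel against the three subtracted terms---rather, each diagonal term $(i,i)$ appears \emph{twice} among the positive summands $l_1(ab)c$, $(-1)^{\deg a}a\,l_1(bc)$, $(-1)^{\deg b(\deg a+1)}b\,l_1(ac)$ (e.g., $(1,1)$ occurs in both the first and third) and once with negative sign among the subtracted terms, netting one copy to match the LHS; the off-diagonal description is correct as stated.
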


\subsubsection{$l_1$ as the pullback of the differential}\label{l1Delta}

We next offer another interpretation of $l_1$, relating it to the operator $\Delta$ of \cite[\S 2.1]{BK} (in which \eqref{DerivationFail} is interpreted as the Tian-Todorov lemma).  Choose a primitive element $\Omega$ of $\Lambda^n N$.  This gives an isomorphism
\begin{align*}
    \bb{Z}[N]\otimes \Lambda^* M \risom \bb{Z}[N]\otimes \Lambda^* N, \hspace{.25 in}
    \omega \mapsto \iota_{\omega} \Omega.
\end{align*}
Let $d$ denote the exterior differential on $\bb{Z}[N]\otimes \Lambda^* N = \Omega^*(\bb{G}_m(M))$, the space of differential forms on $\bb{G}_m(M)$.  In other words, 
\begin{align}\label{dFormula1}
    d(z^n\otimes \xi)=z^n\otimes (n\wedge \xi).
\end{align} 

One defines $\Delta$ by the formula
\begin{align}\label{DeltaDef1}
    \iota_{\Delta(z^n \omega)} \Omega = d(\iota_{z^n\omega} \Omega).
\end{align}

It is claimed in \cite[\S 2.1]{BK} that $\Delta$ is a BV-operator and satisfies \eqref{DerivationFail} as $\delta$ when $[,]$ is taken to be the Schouten-Nijenhuis bracket, however the signs are off by Proposition \ref{BVprop} combined with the following result.

\begin{prop}\label{BVProp} For any homogeneous $\zeta\in A$, $\Delta(\zeta) = (-1)^{\deg(\zeta)+1}l_1(\zeta)$.
\end{prop}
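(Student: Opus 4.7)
The plan is to reduce the claim to a sign-twisted duality between contraction by $n \in N$ on $\Lambda^* M$ and wedging by $n$ on $\Lambda^* N$, mediated by the isomorphism $\omega \mapsto \iota_\omega \Omega : \Lambda^k M \to \Lambda^{r-k} N$ (where $r = \rank N$). Everything is $\bb{Z}[N]$-linear and additive, so it suffices to verify the identity on a homogeneous monomial $\zeta = z^n \omega$ with $\omega \in \Lambda^k M$, in which case $\deg(\zeta) = k$ and $(-1)^{\deg(\zeta)+1} = (-1)^{k-1}$.

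First, I would unpack both sides explicitly. From the definition \eqref{DeltaDef1} and formula \eqref{dFormula1},
\begin{equation*}
\iota_{\Delta(\zeta)} \Omega \;=\; d(\iota_{z^n \omega} \Omega) \;=\; d(z^n \iota_\omega \Omega) \;=\; z^n\,(n \wedge \iota_\omega \Omega).
\end{equation*}
Meanwhile, by the definition of $l_1$,
\begin{equation*}
\iota_{l_1(\zeta)} \Omega \;=\; \iota_{z^n \iota_n(\omega)} \Omega \;=\; z^n \, \iota_{\iota_n(\omega)} \Omega.
\end{equation*}
So, using that $\omega' \mapsto \iota_{\omega'} \Omega$ is an isomorphism $\Lambda^{*} M \risom \Lambda^{r-*} N$ (hence injective), the proposition is equivalent to the purely linear-algebraic identity
\begin{equation*}
n \wedge \iota_\omega \Omega \;=\; (-1)^{k-1}\, \iota_{\iota_n \omega} \Omega \qquad \text{for all } n \in N,\ \omega \in \Lambda^k M.
\end{equation*}

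Second, I would prove this identity. Both sides are bilinear in $(n,\omega)$, so I may check it on basis vectors: fix a basis $e_1,\ldots,e_r$ of $N$ with dual basis $e_1^*,\ldots,e_r^*$ of $M$, and take $n = e_j$ and $\omega = e_{i_1}^* \wedge \cdots \wedge e_{i_k}^*$. A short case analysis (whether $j \in \{i_1,\ldots,i_k\}$ or not), together with the signs coming from moving $e_j$ past the basis vectors of $\Omega$, produces the factor $(-1)^{k-1}$ in both cases. Alternatively, and perhaps more cleanly, the identity follows from the graded Leibniz rule $\iota_n(\alpha \wedge \beta) = \iota_n(\alpha)\wedge \beta + (-1)^{\deg \alpha}\alpha \wedge \iota_n(\beta)$ combined with the pairing $\omega \wedge \iota_{\omega'}\Omega$ (viewed in $\Lambda^r N \cong \bb{Z}$), adjointness of $\iota_n$ and wedge-with-$n$ under this pairing, and a comparison of degrees.

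Third, substituting this identity back gives $d(\iota_\zeta \Omega) = (-1)^{k-1} z^n \iota_{\iota_n\omega}\Omega = (-1)^{k-1}\,\iota_{l_1(\zeta)}\Omega$, and injectivity of $\omega' \mapsto \iota_{\omega'}\Omega$ yields $\Delta(\zeta) = (-1)^{k-1} l_1(\zeta) = (-1)^{\deg(\zeta)+1} l_1(\zeta)$, as claimed. The only real obstacle is bookkeeping of sign conventions for $\iota$ on exterior algebras, especially the convention for $\iota$ applied by a multivector; but once a convention is fixed (e.g.\ $\iota_{\alpha\wedge\beta} = \iota_\beta \circ \iota_\alpha$ for $\alpha,\beta \in M$), the identity is a direct computation and the proposition follows.
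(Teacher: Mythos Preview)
Your proposal is correct and follows essentially the same route as the paper: both reduce to the identity $n \wedge \iota_\omega \Omega = (-1)^{\deg(\omega)-1}\iota_{\iota_n\omega}\Omega$, and the paper proves it by pairing with an arbitrary $\alpha \in \Lambda^{r-\deg(\omega)+1}M$, using the adjointness of $\iota$ and $\wedge$, and then observing that $\iota_n(\omega\wedge\alpha)=0$ for degree reasons, which by the graded Leibniz rule yields exactly the needed relation. Your ``alternative'' sketch is precisely this argument; the basis-vector check you mention first would also work but is not what the paper does.
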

We note that this sign is the difference between using left-contraction and right-contraction when defining $l_1$.
\begin{proof}
We can assume $\zeta$ has the form $z^n \omega$ for $\omega \in \Lambda^* M$ homogeneous.  From the definition of $l_1$, the claim is that
\begin{align}\label{DeltaFormula1}
    \Delta (z^n\omega) = (-1)^{\deg(\omega)+1}z^n \iota_n(\omega).
\end{align}
Substituting this into the left-hand side of \eqref{DeltaDef1} yields $(-1)^{\deg(\omega)+1} z^n \iota_{\iota_n \omega}(\Omega)$, while \eqref{dFormula1} makes the right-hand side of \eqref{DeltaDef1} into $z^n(n\wedge \iota_{\omega} \Omega)$.  Dividing both sides by $z^n$, the claim reduces to showing that 
\begin{align}\label{DeltaFormulaReduced1}
    (-1)^{\deg(\omega)+1} \iota_{\iota_n \omega}(\Omega) = n\wedge \iota_{\omega}(\Omega).
\end{align}
Since each side is contained in $\Lambda^{r-\deg(\omega)+1}N$, it suffices to check that they both give the same function on the dual space $\Lambda^{r-\deg(\omega)+1} M$.  Let
$$\langle \cdot,\cdot\rangle:(\Lambda^{r-\deg(\omega)+1}M)\otimes (\Lambda^{r-\deg(\omega)+1} N)\rar \bb{Z}$$
denote the dual pairing.  Let $\alpha\in \Lambda^{r-\deg(\omega)+1} M$.  For any $\beta\in\Lambda^{r-\deg(\omega)+1}N$, $\gamma\in \Lambda^* N$, and $\delta\in \Lambda^* M$, we have the following adjoint relationships between the wedge and interior products:
\begin{align*}
    \langle \iota_{\gamma} \alpha,\beta\rangle &= \langle \alpha,\gamma \wedge \beta\rangle \\
    \langle \alpha,\iota_{\delta} \beta\rangle &= \langle \delta \wedge \alpha,\beta\rangle.
\end{align*}
Now, the left-hand side of \eqref{DeltaFormulaReduced1} paired with $\alpha$ can be written as
\begin{align*}
    \langle \alpha ,  (-1)^{\deg(\omega)+1} \iota_{\iota_n \omega}(\Omega)\rangle = \langle  (-1)^{\deg(\omega)+1}\iota_n (\omega) \wedge \alpha ,  \Omega\rangle,
\end{align*}
while the right-hand side paired with $\alpha$ can be written as
\begin{align*}
    \langle \alpha , n\wedge \iota_{\omega} (\Omega)\rangle = \langle  \omega \wedge \iota_n(\alpha) ,  \Omega\rangle.
\end{align*}
So now it suffices to check that $(-1)^{\deg(\omega)+1}\iota_n (\omega) \wedge \alpha = \omega \wedge \iota_n(\alpha) $.  Since $\deg(\alpha)+\deg(\omega)=r+1$, we have $\alpha \wedge \omega = 0$, hence
\begin{align*}
    0=\iota_n(\omega\wedge \alpha)=\iota_n(\omega) \wedge \alpha +(-1)^{\deg(\omega)}\omega \wedge \iota_n(\alpha). 
\end{align*}
The claim follows.  
\end{proof}

\subsubsection{$L$-infinity structure}

We next show that the $L$-infinity Jacobi identities hold for the brackets $l_k$ on $A_0$.  Given $i,j\ge0$, an $i$-$j$-unshuffle is a permutation of $1,\ldots,(i+j)$ that preserves the order of $1,\ldots,i$ as well as of $i+1,\ldots,i+j$.
Let $\UnShuff(i,j)$ denote the set of $i$-$j$-unshuffles. 
Recall the notion of the graded signature $\chi$ from \S \ref{GradedSymSection}.

 Fix $i,j,k\in \bb{Z}_{\geq 1}$ such that $i+j=k+1$.  For $\zeta_{\ell}=z^{n_{\ell}}\alpha_{\ell}$, $\ell=1,\ldots,k$, define
\begin{align}\label{Dij}
   D_{ij}\coloneqq  \sum_{\sigma\in \UnShuff(i,j)} \chi(\sigma,\zeta_1,\ldots,\zeta_k)(-1)^{i(k-i)}l_j(l_i(\zeta_{\sigma(1)},\ldots,\zeta_{\sigma(i)}),\zeta_{\sigma(i+1)},\ldots,\zeta_{\sigma(k)}).
\end{align}
The level-$k$ $L$-infinity Jacobi identity states that 
\begin{align}\label{Jacobi}
    \sum_{\substack{i,j\in \bb{Z}_{\geq 1} \\ i+j=k+1}} D_{ij}=0.
\end{align}
For fixed $\sigma \in \UnShuff(i,j)$, the $\epsilon$-factor that appears when applying $l_i$ in \eqref{Dij} is
$$\epsilon(\zeta_{\sigma(1)},\ldots,\zeta_{\sigma(i)})=(-1)^{\sum_{\ell=1}^i (i-\ell)|\zeta_{\sigma(\ell)}|},$$
and the $\epsilon$-factor from when applying $l_j$ is 
$$\epsilon(l_i(\zeta_{\sigma(1)},\ldots ,\zeta_{\sigma(i)}),\zeta_{\sigma(i+1)},\ldots,\zeta_{\sigma(k)})=(-1)^{(j-1)\left[\left(\sum_{\ell=1}^i |\zeta_{\sigma(\ell)}| \right)+(i-2)\right]+\sum_{\ell=1}^{j-1} (j-\ell-1)|\zeta_{\sigma(\ell+i)}|}.$$ 
It follows (keeping in mind that $j-1=k-i$) that $$\epsilon(\zeta_{\sigma(1)},\ldots,\zeta_{\sigma(i)})\epsilon(l_i(\zeta_{\sigma(1)},\ldots ,\zeta_{\sigma(i)}),\zeta_{\sigma(i+1)},\ldots,\zeta_{\sigma(k)}) = (-1)^{i(k-i)}\epsilon(\zeta_{\sigma(1)},\ldots,\zeta_{\sigma(k)}).$$
Hence,
\begin{align}\label{Dijl1}
D_{ij}=l_1\left(\sum_{\sigma\in \UnShuff(i,j)} \chi(\sigma,\zeta_1,\ldots,\zeta_k)\epsilon(\zeta_{\sigma(1)},\ldots,\zeta_{\sigma(k)})l_1(\zeta_{\sigma(1)}\cdots \zeta_{\sigma(i)})\zeta_{\sigma(i+1)}\cdots \zeta_{\sigma(k)}\right).
\end{align}
Now let $n\coloneqq \sum_{\ell=1}^k n_{\ell}$, and for $\sigma\in \UnShuff(i,j)$, let $n_{\sigma}\coloneqq \sum_{\ell=1}^i n_{\sigma(\ell)}$.  Then  \eqref{Dijl1} can be written as
\begin{align}\label{DijIota}
 D_{ij}= z^n\iota_n\left( \sum_{\sigma\in \UnShuff(i,j)} \chi(\sigma,\zeta_1,\ldots,\zeta_k)\epsilon(\zeta_{\sigma(1)},\ldots,\zeta_{\sigma(k)})\iota_{n_{\sigma}}(\alpha_{\sigma(1)} \wedge\cdots\wedge \alpha_{\sigma(i)})\wedge \alpha_{\sigma(i+1)} \wedge\cdots\wedge \alpha_{\sigma(k)} \right).
\end{align}
Since contractions are anti-derivations, we have
\begin{equation}
 \begin{array}{l}
    \iota_{n_{\sigma}}(\alpha_{\sigma(1)} \wedge\cdots\wedge \alpha_{\sigma(i)})\\[2mm] 
    \quad= \sum_{\ell=1}^i \left((-1)^{\sum_{q=1}^{\ell-1} \deg(\alpha_{\sigma(q)})}\right) \alpha_{\sigma(1)}\wedge \cdots \wedge \alpha_{\sigma(\ell-1)} \wedge \iota_{n_{\sigma}}( \alpha_{\sigma(\ell)}) \wedge \alpha_{\sigma(\ell+1)} \wedge \cdots \wedge \alpha_{\sigma(i)}.
    \end{array}
    \label{eq-anti-derivation}
\end{equation}
 Combining this with \eqref{chiomega}, we find that 
\begin{align}\label{sigmaFixed}
    \chi(\sigma,\zeta_1,\ldots,\zeta_k)&\epsilon(\zeta_{\sigma(1)},\ldots,\zeta_{\sigma(k)})\iota_{n_{\sigma}}(\alpha_{\sigma(1)} \wedge\cdots\wedge \alpha_{\sigma(i)})\wedge \alpha_{\sigma(i+1)} \wedge\cdots\wedge \alpha_{\sigma(k)} \nonumber \\
    =& \epsilon(\zeta_1,\ldots,\zeta_k)\sum_{\ell=1}^i \left((-1)^{\sum_{q=1}^{\sigma(\ell)-1} \deg(\alpha_q)}\right) \alpha_1\wedge \cdots \wedge \iota_{n_{\sigma}} (\alpha_{\sigma(\ell)}) \wedge \cdots \wedge \alpha_k.
\end{align}
Here, for $S_{\ell}\coloneqq \{1,2,\ldots,\sigma(\ell)-1\}\setminus \{\sigma(1),\sigma(2),\ldots,\sigma(\ell-1)\}$, the contribution of $\sum_{q\in S_{\ell}}\deg(\alpha_q)$ to the exponent of $(-1)$ is the result of the difference between moving $\iota_{n_{\sigma}}\alpha_{\sigma(\ell)}$ past $\alpha_q$ for each $q\in S_{\ell}$ versus moving $\alpha_{\sigma(\ell)}$ past $\alpha_q$ for each $q\in S_{\ell}$.  Combining \eqref{sigmaFixed} with \eqref{DijIota}, we now have
\begin{align}\label{DijAgain}
    D_{ij}=z^n \epsilon(\zeta_1,\ldots,\zeta_k) \iota_n\left(\sum_{\sigma\in\UnShuff(i,j)} \sum_{\ell=1}^i (-1)^{\sum_{q=1}^{\sigma(\ell)-1} \deg(\alpha_q)} \alpha_1\wedge \cdots \wedge \iota_{n_{\sigma}} (\alpha_{\sigma(\ell)}) \wedge \cdots \wedge \alpha_k \right)
\end{align}

Note that the data an unshuffle $\sigma\in \UnShuff(i,j)$ is equivalent to the data of an $i$-element subset of $\{1,\ldots,k\}$, where this choice of subset is identified with $\{\sigma(1),\ldots,\sigma(i)\}$. So
given distinct $s,t\in \{1,\ldots,k\}$, there are exactly $\binom{k-1}{i-1}$ unshuffles $\sigma \in \UnShuff(i,j)$ for which $s\in \{\sigma(1),\ldots,\sigma(i)\}$, and exactly $\binom{k-2}{i-2}$ unshuffles $\sigma \in \UnShuff(i,j)$ for which $s$ and $t$ are both in $\{\sigma(1),\ldots,\sigma(i)\}$. Thus, for each $t\in \{1,\ldots,k\}$, the expression
\begin{align}\label{BigParenthesesPart}
    \sum_{\sigma\in\UnShuff(i,j)} \sum_{\ell=1}^i (-1)^{\sum_{q=1}^{\sigma(\ell)-1} \deg(\alpha_q)} \alpha_1\wedge \cdots \wedge \iota_{n_{\sigma}} (\alpha_{\sigma(\ell)}) \wedge \cdots \wedge \alpha_k
\end{align}
from the large parentheses of \eqref{DijAgain} includes precisely $\binom{k-1}{i-1}$ terms of the form $$(-1)^{\sum_{q=1}^{s-1} \deg(\alpha_q)} \alpha_1\wedge \cdots \wedge \iota_{n_{\sigma}} (\alpha_{s}) \wedge \cdots \wedge \alpha_k$$
for various $n_{\sigma}$.  Furthermore, $n_s$ is a term in $n_{\sigma}$ for all $\binom{k-1}{ i-1}$ of these terms, while $n_t$ for $t\neq s$ is a term in $n_{\sigma}$ for $\binom{k-2}{i-2}$ of these terms.  Thus, the expression from \eqref{BigParenthesesPart} is equivalent to
\begin{align*}
    \binom{k-2}{i-2} \iota_n(\alpha_1\wedge \cdots \wedge \alpha_k)  + \left(\binom{k-1}{i-1} - \binom{k-2}{i-2}\right) \sum_{\ell=1}^k (-1)^{\sum_{q=1}^{\ell-1} \deg(\alpha_q)} \alpha_1 \wedge \cdots \wedge \iota_{n_{\ell}}\alpha_{\ell} \wedge \cdots \wedge \alpha_k.
\end{align*}
Substituting into the large parentheses from \eqref{DijAgain}, and then using the fact that $\iota_n\circ \iota_n=0$ along with the Pascal's triangle identity $\binom{k-2}{i-2} + \binom{k-2}{i-1}= \binom{k-1}{i-1}$, we obtain
\begin{align*}
    D_{ij}=z^n\epsilon(\zeta_1,\ldots,\zeta_k)\binom{k-2}{i-1} \iota_n \left(\sum_{\ell=1}^k (-1)^{\sum_{q=1}^{\ell-1} \deg(\alpha_q)} \alpha_1 \wedge \cdots \wedge \iota_{n_{\ell}}\alpha_{\ell} \wedge \cdots \wedge \alpha_k\right).
\end{align*}
 Hence,
\begin{align*}
    \sum_{i+j=k+1} D_{ij} = 2^{k-2}z^n\epsilon(\zeta_1,\ldots,\zeta_k) \iota_n \left(\sum_{\ell=1}^k (-1)^{\sum_{q=1}^{\ell-1} \deg(\alpha_q)} \alpha_1 \wedge \cdots \wedge \iota_{n_{\ell}}\alpha_{\ell} \wedge \cdots \wedge \alpha_k\right)
\end{align*}

This shows that the Jacobi identities in general fail on $A$ (except for the $k=1$ case, which just says $l_1\circ l_1=0$).  However, if $\iota_{n_{\ell}}\alpha_{\ell}=0$ for each $\ell$, then we in fact get that each $D_{ij}=0$.  Combining this with \eqref{degree k-2} and \eqref{GradedSkew}, we have shown that $A_0$ is an $L$-infinity algebra:

\begin{thm}\label{Linfinity}
The brackets $l_k$ make $A_0$ into an $L$-infinity algebra.  Furthermore, each $D_{ij}$ as in \eqref{Dij} equals $0$ on $A_0$.
\end{thm}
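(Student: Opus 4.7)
The plan is to assemble the ingredients prepared in the preceding calculations. To verify that $(A_0, \{l_k\})$ is an $L_\infty$-algebra, I will need three things: graded skew-symmetry of each $l_k$, the correct degree shift (so that $l_k$ has degree $k-2$ with respect to $|\cdot|$), and the $L_\infty$-Jacobi identities $\sum_{i+j=k+1} D_{ij}=0$. The first follows immediately from \eqref{GradedSkew}, which is proven for all of $A$ and therefore restricts to $A_0$; the degree statement is \eqref{degree k-2}, again valid on all of $A$. What remains is to verify the Jacobi identities on $A_0$, and I will in fact establish the strictly stronger claim that every individual $D_{ij}$ vanishes on $A_0$.

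For this, I will invoke the closed-form expression for $D_{ij}$ assembled in the calculations above the theorem. The route is: (a) unfold \eqref{Dij} and combine the two $\epsilon$-factors coming from the inner and outer applications of the brackets to rewrite $D_{ij}$ as a single $l_1$ applied to a signed sum over unshuffles, as in \eqref{Dijl1}; (b) use the anti-derivation property \eqref{eq-anti-derivation} to distribute $\iota_{n_\sigma}$ across the wedge product $\alpha_{\sigma(1)}\wedge\cdots\wedge\alpha_{\sigma(i)}$; (c) absorb the graded signatures $\chi$ via \eqref{chiomega} so that every summand uses the fixed ordering $\alpha_1\wedge\cdots\wedge\alpha_k$; (d) for each fixed $\ell$, count that ${k-1 \choose i-1}$ unshuffles place $\ell$ in the first block, while ${k-2 \choose i-2}$ of them place both $\ell$ and a given second index there; and (e) apply Pascal's identity ${k-1 \choose i-1}-{k-2 \choose i-2}={k-2 \choose i-1}$ together with $\iota_n\circ\iota_n=0$ to collapse the off-diagonal contributions. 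The outcome is
$$
D_{ij}=z^n\,\epsilon(\zeta_1,\ldots,\zeta_k){k-2 \choose i-1}\,\iota_n\!\left(\sum_{\ell=1}^k (-1)^{\sum_{q=1}^{\ell-1}\deg(\alpha_q)}\,\alpha_1\wedge\cdots\wedge\iota_{n_\ell}\alpha_\ell\wedge\cdots\wedge\alpha_k\right).
$$
With this formula at hand, the conclusion is immediate: on $A_0=\ker(l_1)$, every $\zeta_\ell=z^{n_\ell}\alpha_\ell$ satisfies $\iota_{n_\ell}\alpha_\ell=0$, so each summand in the parenthesized expression vanishes, and therefore $D_{ij}=0$.

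The genuinely technical content lies in steps (b)--(e), namely the combinatorics of unshuffles together with the careful tracking of the $\epsilon$- and $\chi$-signs; this is precisely what has been carried out in detail in the paragraphs preceding the theorem statement. Once that bookkeeping is done, the theorem itself is a one-line observation: the defining condition $l_1(\zeta_\ell)=0$ for membership in $A_0$ forces the ``diagonal'' contractions $\iota_{n_\ell}\alpha_\ell$ to vanish, which annihilates each $D_{ij}$ term by term and in particular forces the sum $\sum_{i+j=k+1}D_{ij}$ to vanish. Combined with the graded skew-symmetry and degree count noted above, this gives the $L_\infty$-algebra structure on $A_0$.
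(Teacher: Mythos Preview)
Your proposal is correct and follows the paper's approach exactly: the paper derives the closed-form expression for $D_{ij}$ in the paragraphs preceding the theorem, observes that each summand contains a factor $\iota_{n_\ell}\alpha_\ell$ which vanishes on $A_0$, and then combines this with \eqref{degree k-2} and \eqref{GradedSkew} to conclude. Your outline of steps (a)--(e) faithfully recapitulates that computation, and your final observation matches the paper's one-line conclusion.
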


\begin{rmk}
We have that $l_1^2=0$ on all of $A$, and we have seen that $l_2|_{A_0}$ extends to a Lie bracket (the Schouten-Nijenhuis bracket) on all of $A$.  We therefore wonder whether the other brackets $l_k|_{A_0}$ admit extensions which make $A$ into an $L$-infinity algebra.  One exotic $L$-infinity structure on polyvector fields was constructed in \cite{Shoi}, but this evidently does not restrict to our $L$-infinity structure since the bracket of loc. cit. is trivial in odd degree, whereas our $l_k$ is non-trivial on $A_0$ for all $k\geq 2$.  We also wonder whether the closed elements of a BV-algebra might more generally admit an $L$-infinity structure via a construction analogous to ours, i.e., with the $k$-bracket being given up to sign by taking the associative product and then applying the BV-operator.
\end{rmk}

\subsection{Wall-crossing acts on polyvector fields via the Schouten-Nijenhuis bracket}\label{MSapps}
In the Gross-Siebert program, the construction of a ``mirror'' space proceeds by formally gluing together a collection of algebraic tori via certain wall-crossing automorphisms.  These automorphisms can be viewed as follows.

Fix a commutative ring $R$, and let $P\subset N$ denote the set of integral points of a strictly convex cone in $N_{\bb{R}}$.  Consider the following ring of Laurent series of polyvector fields on the algebraic torus $\Spec R[N]$: 
$$\wh{A}\coloneqq \bigcup_{n\in N}z^{n} R\llb P \rrb\otimes \Lambda^* M$$ where $R\llb P\rrb$ is the power series ring obtained by completing $R[P]$ with respect to its unique monomial maximal ideal.  Let $\wh{A}^d$ denote the $d$-graded part of $\wh{A}$ under our grading $|\cdot|$.  In particular, $\wh{A}^{-1}$ is the Laurent series ring $R(\!(P)\!)\coloneqq \bigcup_{n\in N} z^n R\llb P \rrb$.  Note that our brackets $l_k$ extend to $l_k:\wh{A}^{\otimes k}\rar \wh{A}$.  In particular, we can define $\wh{A}_0\coloneqq \ker(l_1)\subset \wh{A}$ and $\wh{A}_0^d=\wh{A}_0\cap \wh{A}^d$.

The algebra $\wh{A}^0$ is a Lie algebra, what \cite{GPS} calls the module of log derivations, with the bracket being the usual bracket of vector fields.  Let $\f{g}\coloneqq \wh{A}_0^0$.  This forms a Lie subalgebra of $\wh{A}^0$, and since the usual Lie bracket of vector fields agrees with the restriction of the Schouten-Nijenhuis bracket, Proposition \ref{Schouten} implies that we can view $\wh{A}_0^0$ as a Lie algebra under our bracket $l_2$.  
The corresponding Lie group $G\coloneqq \exp(\f{g})$ is what \cite{GPS} calls the \textbf{tropical vertex group}.

We next describe the transformations that generate this group, cf. (2.19) in \cite{GHS}.
We note that for $\alpha\in \widehat A$ and $f\in R(\!(P)\!)=\wh{A}_0^{-1}=\wh{A}^{-1}$, we have the action
\begin{equation}
\alpha(f)\coloneqq \iota_{df}\alpha=[\alpha,f]=\ad_{\alpha}(f).
\label{eq-adjoint-action-on-functions}
\end{equation}
Given $n\in P$, let $\f{g}_n^{\parallel}\subset \f{g}$ denote the Lie subalgebra spanned (topologically) by elements of the form $z^{kn}\otimes m$, $k\in \bb{Z}_{\geq 1}$ and $m\in M$.  Recall as in \cite[Def. 2.11]{GHS} that a ``wall $\f{d}$ with direction $-n$'' is a polyhedral subset of $N_\RR$ and has attached to it an element of $g_{\f{d}}\in \f{g}_n^{\parallel}$. The associated wall-crossing automorphism of $R(\!(P)\!)$ is given by $\exp g_{\f{d}}$ (viewing $g_{\f{d}}$ as a derivation acting on $\wh{A}^{-1}$).  Equivalently, we may view $\exp g_{\f{d}}$ as $\exp \ad_{g_{\f{d}}}$, the restriction of the adjoint action of $\wh{A}$.  For example, for $p\in P$ and $f$ of the form $1+\sum_{k=1}^{\infty} a_kz^{kp}$ with $a_k\in R$ and $m\in p^{\perp}\subset M$, we have the $\ell$'th iterated application $(\ad_{\log(f)\otimes m})^\ell z^n = (\log(f^{\langle n,m\rangle}))^\ell z^n$
via \eqref{eq-adjoint-action-on-functions} and then one finds
\begin{align*}
    (\exp \ad_{\log(f)\otimes m})z^n = z^nf^{\langle n,m\rangle}.
\end{align*}
In the Gross-Siebert construction, one has a scattering structure consisting of walls $(\f{d},g_{\f{d}})$.  Each chamber corresponds to a copy of $\Spec R\llb P\rrb$, and gluing all these schemes together via wall-crossing automorphisms $\exp \ad_{g_{\f{d}}}$ yields (a dense open subset of) the mirror space $\s{X}$.

Of course, rather than restricting each $\exp \ad_{g_{\f{d}}}$ to just $\wh{A}_0^0$, we can consider the action on all of $\wh{A}_0$, which we note can be viewed as the space of polyvector fields on $\wh{A}_0^0$ which are closed under the operator $\Delta$ of \S \ref{l1Delta}.  Even better, we can extend the action to the space $\wh{A}$ of all polyvector fields using the Schouten-Nijenhuis bracket $[\cdot,\cdot]$ as in Proposition \ref{Schouten}.  We refer to these as \textbf{extended wall-crossing automorphisms}.

On the other hand, if we are interested in gluing polyvector fields from different copies of $\Spec R\llb P\rrb$, then we must understand how the automorphism $\exp \ad_{g_{\f{d}}}$ of $\wh{A}_0^0$ acts on these polyvector fields via pushforward, i.e., the multivector derivative $(\exp \ad_{g_{\f{d}}})_*$ of $\exp \ad_{g_{\f{d}}}$.  The following proposition says that these two actions are the same.
\begin{prop}\label{dl2}
For any $\alpha \in \wh{A}$, $$(\exp \ad_{g_{\f{d}}})_* (\alpha) = (\exp \ad_{g_{\f{d}}}) (\alpha).$$  Hence, gluing elements of $\wh{A}$ via our extended wall-crossing automorphisms produces meromorphic sections of the sheaf of polyvector fields on $\s{X}$.
\end{prop}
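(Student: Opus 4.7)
The plan is to argue that both $(\exp\ad_{g_{\f{d}}})_*$ and $\exp\ad_{g_{\f{d}}}$ are $R$-algebra automorphisms of the graded-commutative algebra $(\wh{A},\wedge)$, and then to verify that they coincide on a set of algebra generators. The left-hand side is an algebra automorphism by construction, being the canonical extension to polyvector fields of the scheme automorphism $\phi$ of $\Spec R(\!(P)\!)$ whose action on functions is $\exp\ad_{g_{\f{d}}}$. For the right-hand side, note that $g_{\f{d}}\in\wh{A}_0^0$ has $|\cdot|$-degree $0$; the standard Schouten--Nijenhuis Leibniz rule recalled in \S\ref{l2SN} (which endows $A$, and hence $\wh{A}$, with a Gerstenhaber algebra structure) therefore makes $\ad_{g_{\f{d}}}=[g_{\f{d}},\cdot]$ a derivation of $(\wh{A},\wedge)$. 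The exponential $\exp\ad_{g_{\f{d}}}$ converges in the adic topology coming from the monomial maximal ideal of $R\llb P\rrb$, because elements of $\f{g}_n^{\parallel}$ are topologically nilpotent, and is therefore an $R$-algebra automorphism of $\wh{A}$.

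Next I would observe that $\wh{A}$ is generated as an $R$-algebra by the subalgebra of functions $\wh{A}^{-1}=R(\!(P)\!)$ together with the constant vector fields $m\in M\subset\wh{A}_0^0$, so it suffices to prove the identity on these two families. On $\wh{A}^{-1}$ the equality is immediate from the definition of $\phi$. On a constant vector field $m\in M$, the identity reduces to the classical fact that the pushforward of a vector field by the flow of another vector field equals the exponential of their Lie bracket: by Proposition \ref{Schouten} combined with \eqref{eq-usual-Lie}, the restriction of $\ad_{g_{\f{d}}}$ to vector fields coincides with the ordinary Lie bracket on $\wh{A}^0\cap\wh{A}_0$, so $\exp\ad_{g_{\f{d}}}(m)$ agrees with $\phi_*(m)$. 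The identity on all of $\wh{A}$ then follows from the algebra-automorphism property; the concluding statement of the proposition---that gluing polyvector fields via extended wall-crossing automorphisms produces meromorphic sections on $\s{X}$---then drops out at once, since the algebraic gluing has been identified with the honest geometric pushforward.

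The main obstacle I anticipate is sign bookkeeping. The sign-modification passing from $\ell_k$ to $l_k$ in \S\ref{Linf}, together with the left-contraction convention for $\iota_n$, must be reconciled with the classical Lie-derivative/pushforward convention so that the identification on vector fields carries no spurious sign. Once this base case is pinned down, no further ambiguity can arise, because both operators are $R$-algebra automorphisms of $\wh{A}$ and must coincide on all of $\wh{A}$ as soon as they coincide on algebra generators.
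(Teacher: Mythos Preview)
Your argument is correct and takes a genuinely different route from the paper's. The paper proceeds by induction on $\deg(\alpha)$: a degree-$d$ polyvector field is characterized by its action $\alpha(f)=\ad_\alpha(f)$ on functions, and unwinding the definition of pushforward gives $(\exp\ad_{g_{\f{d}}})_*(\alpha)=\Ad_{\exp(\ad_{g_{\f{d}}})}(\alpha)$ acting as $\ad_{\exp(\ad_{g_{\f{d}}})(\alpha)}$ via the identity $\ad_{\ad_a}(\ad_b)=\ad_{[a,b]}$. Your approach instead front-loads the structural observation: since $|g_{\f{d}}|=0$, the Gerstenhaber Leibniz rule makes $\ad_{g_{\f{d}}}$ an honest (ungraded) derivation of $(\wh{A},\wedge)$, so its exponential is an algebra automorphism; the pushforward is an algebra automorphism by construction; hence it suffices to check equality on the generators $R(\!(P)\!)$ and $M$. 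This is cleaner and isolates the essential input more transparently. The price is that the degree-$1$ check you defer to as ``classical'' is, when unpacked, exactly the paper's computation $\Ad_{\exp D}=\exp(\ad_D)$ specialized to derivations --- so the two proofs share the same core identity, just organized differently.

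Two minor remarks. First, your invocation of Proposition~\ref{Schouten} is unnecessary: the extended wall-crossing automorphism is defined directly via the Schouten--Nijenhuis bracket, which restricts to the ordinary Lie bracket on vector fields by definition, so the detour through $l_2$ and $A_0$ is not needed. Second, your anticipated sign obstacle does not materialize for the same reason: the bracket in play is $[\cdot,\cdot]$ itself, not the sign-modified $l_2$, so the $\ell_k\to l_k$ adjustment never enters the argument.
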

\begin{proof}
We can assume $\alpha$ is homogeneous with $\deg(\alpha)=d$.  We proceed by induction on $d$, noting that the $d=0$ case is trivial.

A homogeneous multivector field $\alpha\in \wh{A}$ of positive degree is characterized by its action on functions $f\in R\llb P\rrb$ via $\alpha(f)=\iota_{df}(\alpha)=\ad_{\alpha}(f)$. 
   The pushforward action $[(\exp \ad_{g_{\f{d}}})_* (\alpha)](f)$ is then given by using $(\exp \ad_{g_{\f{d}}})^{-1}$ to pull back $f$, acting on $f$ by $\alpha$, and then pushing the resulting degree-$(d-1)$ polyvector field forward using $(\exp \ad_{g_{\f{d}}})_*$, which by the inductive assumption is the same as applying $\exp \ad_{g_{\f{d}}}$.  I.e.,
   \begin{align*}
           (\exp \ad_{g_{\f{d}}})_* (\alpha) &= \exp (\ad_{g_{\f{d}}}) \alpha \exp (-\ad_{g_{\f{d}}}) \\
    &= \Ad_{\exp (\ad_{g_{\f{d}}})}(\alpha) \\
    &= [\exp \ad_{\ad_{g_{\f{d}}}}](\alpha).
    \end{align*}
Since $\alpha$ acts on $f\in R\llb P\rrb$ as $\ad_{\alpha}$, the above expression acts on $f$ as $[\exp \ad_{\ad_{g_{\f{d}}}}](\ad_{\alpha})$.  Since $\ad_{\ad_a}(\ad_b)=[\ad_a,\ad_b]=\ad_{[a,b]}$, it follows by induction that $\ad^k_{\ad_a}(\ad_b)=\ad_{\ad_a^k(b)}$, and so $$\exp \ad_{\ad_a}(\ad_b) = \ad_{\exp \ad_a(b)}.$$  We thus see that the action of $[\exp \ad_{\ad_{g_{\f{d}}}}](\alpha)$ on $f$ is via $\ad_{\exp \ad_{g_{\f{d}}}(\alpha)}$, meaning that $[\exp \ad_{\ad_{g_{\f{d}}}}](\alpha)=(\exp \ad_{g_{\f{d}}})(\alpha)$, as desired.
\end{proof}

\subsection{Theta polyvector fields and mirror symmetry}\label{ThetaPolyvectMS}
One can construct canonical bases of ``theta functions'' on Gross-Hacking-Keel-Siebert mirror spaces, cf. \cite{GHK1,GHKK,GHS}.  These theta functions can be expressed in terms of certain counts of tropical disks and tropical curves, cf. \cite{CPS,Man3}.  The original motivation for this article was to show that the tropical multiplicities used for the counts in \cite{Man3} agree with the multiplicities of \cite{MRud} and thus give log Gromov-Witten invariants.  Indeed, this follows from Theorem \ref{BracketMult}, cf. Example \ref{ThetaEx}, and found application in \cite{ManFrob} to prove that the Frobenius structure conjecture of \cite[\S 0.4, arXiv v1]{GHK1} holds for cluster varieties.  Roughly, this conjecture claims that the theta functions can be described in terms of certain descendant log Gromov-Witten invariants.

Proposition \ref{dl2} suggests to us that similar methods can be used to show much more: according to closed string mirror symmetry, the symplectic cohomology of a log Calabi-Yau variety with affine mirror should be isomorphic to the ring of polyvector fields on the mirror (cf. \cite[\S 1]{Pas}).  In particular, $SH_{\log}^0(Y,D)$ is expected to be the coordinate ring of the mirror, i.e., it should be spanned by the theta functions.  As noted in \S \ref{Speculation}, a construction of the log symplectic cohomology ring from punctured GW invariants is being investigated by Gross-Pomerleano-Siebert \cite{GPomS}, building off the ideas of Ganatra-Pomerleano \cite{GP,GP2}.

A construction of ``theta polyvector fields,'' analogous to the construction of theta functions, is not currently known, but we suggest based on Proposition \ref{dl2} that such a construction might be possible using the Schouten-Nijenhuis bracket and higher-codimension tropical incidence conditions, at least for the BV-closed polyvector fields $A_0$.
\begin{rmk}
We note that our brackets $l_k$ can be defined for more general log Calabi-Yau varieties $U$ via $$l_k(\zeta_1,\ldots,\zeta_k)\coloneqq \epsilon(\zeta_1,\ldots,\zeta_k)l_1\left(\prod_{j=1}^k \zeta_j \right)$$ by taking $l_1$ to be the BV--operator as in \S \ref{l1BV}, and defining $\epsilon$ as in \eqref{epsilon}.  We expect these brackets form an $L_{\infty}$-structure on this more general $A_0$ --- indeed, if $U$ contains a Zariski dense algebraic torus, then this follows from the torus case, i.e., Theorem \ref{Linfinity}.  As noted above, for $Y\setminus D$ affine, closed string mirror symmetry predicts that $SH_{\log}^*(Y,D)\cong \Poly^*(U)$ for $U=\Spec SH_{\log}^0(Y,D)$.  We conjecture that this isomorphism identifies our brackets $\{l_k\}_{k\in \bb{Z}_{\geq 1}}$ on $A_0$ with the $L_{\infty}$-structure of Chas-Sullivan \cite{CS} on the equivariant string topology of $Y\setminus D$ --- cf. Observation \ref{Janko2} for the case where $Y\setminus D$ is an algebraic torus.
\end{rmk}

\begin{appendix}
\section{Relation to String Topology and Symplectic Cohomology}\label{app}
We learned the following in conversations with Janko Latschev.
Let $N,M$ be dual lattices of rank $r$ as before. We set $T=N_\RR/N \cong (S^1)^r$, so that $N=H_1(T,\ZZ)$.
Notice that $N$ indexes the free homotopy classes of loops in $T$ and hence also the components of the free loop space $\s LT$ of $T$.
In this special situation, the evaluation map $\s LT\ra T$ at the base point is a homotopy equivalence on each connected component of $\s LT$. In particular each component has the homology of $T$, which as a graded $\ZZ$-module can be identified with $\bigwedge^\bullet N$. Choosing an orientation of $T$, i.e. a generator {$\Omega$} of $\bigwedge^rM$, gives rise to an isomorphism
\begin{align*}
  \bigwedge {}\!^{r-\bullet} N &\cong \bigwedge {}\!^{\bullet} M \\
  \alpha&{\mapsto \iota_{\alpha}\Omega},
\end{align*}
and so  we get the identification of graded $\ZZ$-modules 
\begin{equation} \label{iso-loop-module}
H_{r-\bullet}(\s LT,\ZZ)\cong \ZZ[N]\otimes_\ZZ\bigwedge{}\!^\bullet M\equiv A.
\end{equation} 
The homology of $\s LT$ carries the \emph{loop product} \cite{CS} which combines concatenation of loops and intersection theory in the base manifold and has geometric degree $-r$, so it gives a degree 0 map
\begin{equation} \label{loop-product}
H_{r-\bullet}(\s LT)\otimes H_{r-\bullet}(\s LT)\ra H_{r-\bullet}(\s LT).
\end{equation}
$H_\bullet(\s LT)$ also carries a BV-operator which comes from the $S^1$-action of moving the base point of the loop,
\begin{equation} \label{loop-BV}
H_\bullet(\s LT)\ra H_{\bullet+1}(\s LT)
\end{equation}
turning the homology of the free loop space into a BV-algebra. Simple geometric considerations now yield

\begin{obs} \label{Janko1}
Under the isomorphism \eqref{iso-loop-module}, the loop product \eqref{loop-product} gets identified with the usual product in $A$ given by 
$(z^{n_1}\otimes\alpha_1)\cdot (z^{n_2}\otimes\alpha_2)=z^{n_1+n_2}\otimes (\alpha_1\wedge\alpha_2)$.
Furthermore, the BV-operator \eqref{loop-BV} becomes the operator $l_1:z^{n}\otimes\alpha\mapsto z^{n}\otimes\iota_n\alpha$. 
\end{obs}

The Viterbo isomorphism \cite{Viterbo} identifies the symplectic cohomology of the cotangent bundle $T^*T$ with the homology of the free loop space, so in standard grading conventions we have
\begin{equation} \label{SHtoHL}
\SH^\bullet(T^*T,\ZZ) \cong H_{r-\bullet}(\s LT,\ZZ),
\end{equation}
which is an isomorphism of BV-algebras by \cite[Corollary 6.1.2]{Abouzaid}. 
Combining this with Observation~\ref{Janko1} gives:
\begin{obs} \label{Abouzaid}
$A\cong \SH^\bullet(T^*T,\ZZ)$ as BV-algebras.
\end{obs}
Note that $T^*T$ is the mirror dual to the algebraic torus $M\otimes\bb{C}^*$.

We finally turn to the $L_\infty$-structure on $A_0=\ker(l_1)$. 
There is a natural map from $S^1$-equivariant to ordinary homology $H^{S^1}_\bullet(\s LT) \ra H_{\bullet+1}(\s LT)$, which is called ``mark'' in \cite{CS}.  In the case of the torus, the kernel of this map consists precisely of the homology of the component of contractible loops, and so we get an injection
\begin{equation}\label{MarkMap}
H_{r-1-\bullet}^{S^1}(\s LT,T) \ra H_{r-\bullet}(\s LT).
\end{equation}
$S^1$-equivariant homology of the loop space carries its own Lie bracket, known as the {\em string bracket} \cite[Theorem 6.1]{CS}, and it is easy to see that the map \eqref{MarkMap} is a morphism of Lie algebras, where on $H_{r-\bullet}(\s LT)$ we use the Lie bracket of degree 1 induced from the BV-operator known as the {\em loop bracket}.

Chas and Sullivan also described an $L_\infty$-structure on the equivariant homology of a free loop space which is built from the product and the BV-operator in ordinary homology of $\s LT$ and satisfies even stronger relations than usually required \cite[Theorem 6.2]{CS}, in our notation $D_{ij}+D_{ji}=0$.
It turns out that under the identification in Observation~\ref{Janko1}, this $L_\infty$-structure is precisely the one (re-)discovered in Theorem~\ref{Linfinity}. 
Note that in Theorem~\ref{Linfinity} we proved $D_{ij}=0$ which is yet slightly stronger a condition than what Chas-Sullivan found.
\begin{obs} \label{Janko2}
The ``mark'' map $H^{S^1}_{r-1-\bullet}(\s LT) \ra H_{r-\bullet}(\s LT)\cong A$ has $A_0\setminus \ZZ=\ker (l_1)\setminus \ZZ$ as its image (where by $\bb{Z}$ we mean $\bb{Z}\otimes \Lambda^0 M\subset \bb{Z}[N]\otimes \Lambda^* M$) and the homology of the component of contractible loops as its kernel. Moreover, it is a morphism of $L_\infty$-algebras where on the $S^1$-equivariant homology of $\s LT$ we use the $L_\infty$-structure of \cite[Theorem 6.2.]{CS} and on the submodule $A_0\subset A$ we use the $L_\infty$-structure described in \S\ref{Linf}.
\end{obs}

In \cite{Tonk}, Tonkonog considers certain $L_{\infty}$-augmentations of $CF^*_{S^1,+}(M)$ where $M$ is a Liouville domain (e.g., $M=T^*T$) and $CF^*_{S^1,+}$ denotes the positive equivariant Floer complex equipped with the Chas-Sullivan $L_{\infty}$-bracket.  The augmentations are defined in terms of gravitational descendant invariants which are closely related to the descendant log Gromov-Witten invariants we consider --- it  appears that Tonkonog's augmentations, at least for $M=T^*T$, are mirror to our $l_k$'s times a factor which counts permutations of the markings modulo automorphisms of the underlying genus $0$ curve.

\end{appendix}
   
\bibliographystyle{alpha}  
\bibliography{biblio}        
\index{Bibliography@\emph{Bibliography}}%

\end{document}